\documentclass{amsart}
\usepackage[english]{babel}
\usepackage[latin1]{inputenc}
\usepackage{amssymb}
\usepackage{amsmath}
\usepackage{amsfonts}
\usepackage{amsthm}
\usepackage{enumerate}
\usepackage{color}
\newtheorem{hypothesis}{Hypothesis}
\newtheorem{theorem}{Theorem}[section]
\newtheorem{lemma}[theorem]{Lemma}
\newtheorem{proposition}[theorem]{Proposition}
\newtheorem{corollary}{Corollary}[section]

\theoremstyle{definition}
\newtheorem{definition}[theorem]{Definition}

\theoremstyle{remark}
\newtheorem{remark}[theorem]{Remark}
\def\R{{\mathbb R}}

\newcommand{\N}{\mathbb{N}}

\def\F{\mathcal{F}}
\numberwithin{equation}{section}

\newcommand\cro[1]{\langle #1 \rangle}

\begin{document}

\title[Well-posedness for dispersive Burgers equations]{On well-posedness for some dispersive perturbations of  Burgers' equation}
\author[L. Molinet, D. Pilod and S. Vento]{Luc Molinet, Didier Pilod  and St\'ephane Vento}

\address{Luc Molinet,  Institut Denis Poisson, Universit\'e de Tours, Universit\'e d'Orl\'eans, CNRS (UMR 7013), Parc Grandmont, 37200 Tours, France.}
\email{Luc.Molinet@univ-tours.fr}

\address{Didier Pilod\footnote{Current address: Department of Mathematics, University of Bergen, PO Box 7803, 5020 Bergen, Norway}, Instituto de Matem\'atica, Universidade Federal do Rio de Janeiro, Caixa Postal 68530, CEP: 21945-970, Rio de Janeiro, RJ, Brasil. }
\email{didierpilod@gmail.com}

\address{St\'ephane Vento, Universit\'e Paris 13, Sorbonne Paris Cit\'e, LAGA, CNRS ( UMR 7539),  99, avenue Jean-Baptiste Cl\'ement, F-93 430 Villetaneuse, France.}
\email{vento@math.univ-paris13.fr}

\begin{abstract}
We show that the Cauchy problem for a class of dispersive perturbations of Burgers' equations containing the low dispersion Benjamin-Ono equation
\begin{displaymath}
\partial_tu-D^{\alpha}_x\partial_xu=\partial_x(u^2) \, ,   \quad 0<\alpha\le 1 ,
\end{displaymath}
 is locally well-posed in $H^s(\R)$ when $s>s_\alpha: = \frac 32-\frac {5\alpha} 4$. As a consequence, we obtain global well-posedness in the energy space $H^{\frac{\alpha}2}(\R)$ as soon as $\frac\alpha 2> s_\alpha$, i.e. $\alpha>\frac67$.
\end{abstract}

\date{\today}
\maketitle

\vspace{-0.5cm}

\section{Introduction}
\bigskip

This paper is concerned with the initial value problem for a class of dispersive perturbations of Burgers' equation containing in particular the low dispersion Benjamin-Ono equation
\begin{equation} \label{fKdV}
\partial_tu-D^{\alpha}_x\partial_xu=\partial_x(u^2) \, ,
\end{equation}
where $u=u(x,t)$ is a real valued function, $x \in \mathbb R$, $t \in \mathbb R$, $\alpha >0$  and $D^{\alpha}_x$ is the Riesz potential of order $-\alpha$, which is given via Fourier transform by
\begin{displaymath}
\widehat{D_x^{\alpha}\phi}(\xi)=|\xi|^{\alpha}\widehat{\phi}(\xi) \, .
\end{displaymath}
The cases $\alpha=2$ and $\alpha=1$ correspond to the well-known Korteweg-de Vries (KdV) and Benjamin-Ono (BO) equations. In the case $\alpha=0$, $\partial_xu$ is a transport term, so that there is no dispersion anymore and equation \eqref{fKdV} corresponds merely to the inviscid Burgers equation.

\smallskip
While the Cauchy problem associated with \eqref{fKdV} is now very well-understood in the case $\alpha \ge 1$, our objective here is to investigate the case of low dispersion when $0<\alpha<1$, which seems of great physical interest (see for example the introductions in \cite{KlSa,LiPiSa1} and the references therein).
In particular, in the case $\alpha=\frac12$, the dispersion is somehow reminiscent  of the linear dispersion of finite depth water waves with surface tension. The corresponding Whitham equation with surface tension writes
\begin{equation} \label{whitham}
\partial_tu-w(D_x)\partial_xu +\partial_x(u^2) = 0 \, ,
\end{equation}
where $u=u(x,t)$ is a real valued function, $x \in \mathbb R$, $t \in \mathbb R$, $\omega(D_x)$ is  the Fourier multiplier of symbol $w(\xi)=\left( \frac{\tanh (\xi)}{\xi} \right)^{\frac12}\left(1+\tau\xi^2\right)^{\frac12}$ and $\tau$ is a positive parameter related to the surface tension. Note that for high frequencies $w(\xi) \sim |\xi|^{\frac12}$,  which corresponds exactly to equation \eqref{fKdV} in the $L^2$ critical case.

\smallskip
Equation \eqref{fKdV} is hamiltonian. In particular, the quantities
\begin{equation} \label{M}
M(u)=\int_{\mathbb R}u^2(x,t) \, dx \, ,
\end{equation}
and
\begin{equation} \label{H}
H(u)=\int_{\mathbb R}\big( \frac{1}{2} |D^{\frac{\alpha}2}u(x,t)|^2+\frac{1}{3}u^3(x,t)\big) \, dx \, .
\end{equation}
are (at least formally) conserved by the flow associated to \eqref{fKdV}.
Moreover, equation \eqref{fKdV} is invariant under the scaling transformation
\begin{displaymath}
u_{\lambda} (x,t)=\lambda^{\alpha}u(\lambda x,\lambda^{\alpha+1}t) \, ,
\end{displaymath}
for any positive number $\lambda$. A straightforward computation shows that $\|u_{\lambda}\|_{\dot{H}^s}=\lambda^{s+\alpha-\frac{1}{2}}\|u_{\lambda}\|_{\dot{H}^s}$, and thus the critical index corresponding to \eqref{fKdV} is $\tilde{s}_{\alpha}=\frac{1}{2}-\alpha$. In particular, equation \eqref{fKdV} is $L^2$-critical for $\alpha=\frac{1}{2}$ and \emph{energy} critical for $\alpha=\frac13$.

\medskip
Next we recall some important facts about the initial value problem (IVP) associated with \eqref{fKdV} in $L^2$-based Sobolev spaces $H^s(\mathbb R)$\footnote{ Recall that the natural space where the quantities \eqref{M} and \eqref{H} make sense is $H^{\frac{\alpha}{2}}(\mathbb R)$, at least when $\alpha \ge \frac13$.}. For results in weighted Sobolev spaces, we refer to Fonseca, Linares and Ponce \cite{FoLiPo} and the references therein. It was proved by Molinet, Saut and Tzvetkov \cite{MoSaTz}, that, due to bad high-low frequency interactions in the nonlinearity, the IVP associated with \eqref{fKdV} cannot be solved by a contraction argument on the corresponding integral equation in any Sobolev space $H^s(\mathbb R)$, $s \in \mathbb R$, as soon as $\alpha< 2$. Thus, one needs to use compactness arguments based on \textit{a priori} estimates on the solution and on the difference of two solutions at the required level of regularity.

\smallskip
Standard energy estimates, the Kato-Ponce commutator estimate and Gronwall's inequality provide the following bound for solutions of \eqref{fKdV}
\begin{displaymath}
\|u\|_{L^{\infty}_TH^s_x} \le c \|u_0\|_{H^s_x}e^{c\int_0^T\|\partial_xu\|_{L^{\infty}_x}dt}.
\end{displaymath}
Therefore, one way to obtain \textit{a priori} estimates in $H^s$ is to control $\|\partial_xu\|_{L^1_TL^{\infty}_x}$ at the $H^s$-level. This can be done easily  in $H^{\frac32+}(\mathbb R)$ by using the Sobolev embedding $H^{\frac12+}(\mathbb R) \hookrightarrow L^{\infty}(\mathbb R)$. In the Bejamin-Ono  case $\alpha=1$, Ponce \cite{Po} used the smoothing effects (Strichartz estimates, Kato type smoothing estimate and maximal function estimate) associated with the dispersive part of \eqref{fKdV} to obtain well-posedness in $H^{\frac32}(\mathbb R)$. Later on, Koch and Tzvetkov \cite{KoTz} introduced a refined Strichartz estimate, derived by chopping the time interval in small pieces whose length depends on the spatial frequency of the solution, which allowed them to prove local well-posedness for BO in $H^{\frac54+}(\mathbb R)$. This refined Strichartz estimate was then improved by Kenig and Koenig \cite{KK} and the local well-posedness for BO pushed down to $H^{\frac98+}(\mathbb R)$. Recently, Linares, Pilod and Saut \cite{LiPiSa1} extended Kenig and Koenig's result to \eqref{fKdV} in the range $0<\alpha<1$ by proving that the corresponding initial value problem is well-posed in $H^s(\mathbb R)$ for $s>\frac32-\frac{3\alpha}8$. Note that even very few dispersion (when $0<\alpha \ll 1$) allows to enlarge the resolution space, which is not the case anymore when there is no dispersion. Indeed, it is known that the IVP associated with Burgers' equation is ill-posed in $H^{\frac32}(\mathbb R)$ (c.f. Remark 1.6. in \cite{LiPiSa1}).

\smallskip
Another technique to obtain suitable estimates on the solutions at low regula-rity is the use of a nonlinear \emph{gauge} transformation which allows to weaken the bad frequency interactions in the nonlinear term. Such transformation was introduced by Tao \cite{Ta} for the Benjamin-Ono equation and enabled him to prove global well-posedness for BO in $H^1(\mathbb R)$. By using this \emph{gauge} transformation in the context of Bourgain's spaces $X^{s,b}$, Burq and Planchon \cite{BuPl}, respectively Ionescu and Kenig \cite{IK}, proved that the IVP associated with BO is well-posed in $H^{\frac14+}(\mathbb R)$, respectively $L^2(\mathbb R)$. We also refer to Molinet and Pilod \cite{MoPi} for another proof of Ionescu and Kenig's result with stronger uniqueness result (for example unconditional uniqueness in $H^{\frac14+}(\mathbb R)$). In  \cite{HIKK}, Herr, Ionescu, Kenig and Koch were able to extend Ionescu and Kenig's result  to the whole range $1 < \alpha <2$.  By using a paradifferential gauge transformation, they proved that the IVP associated to \eqref{fKdV} is globally well-posed in $L^2(\mathbb R)$ for $1 < \alpha <2$.

\smallskip
Recently Molinet and Vento \cite{MV} introduced a new method to obtain energy estimates at low regularity for
strongly nonresonant dispersive equations. It starts with the classical  estimate for the dyadic piece $P_Nu$  localized in turn of the spatial frequency $N$,
\begin{equation} \label{energy}
\|P_Nu\|_{L^{\infty}_TL^2_x}^2 \lesssim \|P_Nu_0\|_{L^2_x}^2+\sup_{t \in ]0,T[}\left| \int_0^t\int_{\mathbb R}P_N\partial_x(u^2)P_Nu dxdt \right| \, .
\end{equation}
To control the last term on the right-hand side of the energy estimate \eqref{energy}, one performs a paraproduct decomposition
\begin{equation} \label{energy2}
\int_{\mathbb R \times [0,t]}P_N\partial_x(u^2)P_Nu  =\int_{\mathbb R \times [0,t]}\partial_xP_N(u_{\gtrsim N}u_{\gtrsim N})P_Nu +
\int_{\mathbb R \times [0,t]}\partial_xP_N(u_{\ll N}u)P_Nu  \,
\end{equation}
and put the derivative on the lowest spatial frequencies by \lq\lq integrating by parts\rq\rq\footnote{Since we work with frequency localized functions, this corresponds actually to use suitable commutator estimates. }.
The idea is then to perform a dyadic decomposition of each function  in term of its modulation variable and to put one of them (the one with the greatest modulation) in the space $X^{s-1,1}$. This allows to recover at least $|\Omega| N^{-1} $  where $ \Omega$ is the resonance function.  The price to pay is to handle the characteristic function $1_{]0,t[}$ which appears after extending the functions to $\mathbb R^2$ and is not continuous in $X^{s-1,1}$. On the positive side, the $X^{s-1,1}$  norm of $u$ is relatively simple to control by using the classical linear estimates in Bourgain's spaces as follows
\begin{equation} \label{bilinear}
\|u\|_{X^{s-1,1}} \lesssim \|u_0\|_{H^{s-1}}+\|\partial_xJ^{s-1}_x(u^2)\|_{L^2_{x,T}} \lesssim \|u_0\|_{H^{s}}+\|J^s_x(u^2)\|_{L^{\infty}_TH^s_x} \, .
\end{equation}
Thus, for $s>\frac12$, one can easily concludes the bilinear estimate since $H^s(\mathbb R)$ is a Banach algebra. By using this method, Molinet and Vento proved that the IVP associated with \eqref{fKdV} is locally well-posed in $H^s(\mathbb R)$ for $s \ge 1-\frac{\alpha}2$ when $1 \le \alpha \le 2$.
Note that Guo  \cite{Guo} also proved local well-posedness in $H^s(\mathbb R)$ for $s>2-\alpha$ when $1  \le \alpha \le 2$ without using a gauge transformation.  He used instead the short time Bourgain's spaces in the way of Ionescu, Kenig and Tataru in \cite{IKT}.

\medskip
Throughout this paper we consider the class of dispersive equations
\begin{equation}\label{dpB}
  \partial_t u+ L_{\alpha+1}u = \partial_x(u^2),
\end{equation}
where $u=u(x,t)$ is a real-valued function, $x\in\R$, $t\in\R$, $\alpha>0$ and the linear operator $L_{\alpha+1}$ satisfies the following hypothesis.
\begin{hypothesis}	\label{hyp1}
We assume that $L_{\alpha+1}$ is the Fourier multiplier operator by $i\omega_{\alpha+1}$ where $\omega_{\alpha+1}$ is a real-valued odd function belonging to $C^1(\R)\cap C^\infty(\R^\ast)$ and satisfying:  There exists $\xi_0>0$ such that for any $\xi\ge \xi_0$, it holds
\begin{equation}\label{hyp1.1}
 |\partial^\beta \omega_{\alpha+1}(\xi)| \sim |\xi|^{\alpha+1-\beta},\ \beta\in \{0,1,2\},
\end{equation}
and
\begin{equation}\label{hyp1.2}
  |\partial^\beta \omega_{\alpha+1}(\xi)| \lesssim |\xi|^{\alpha+1-\beta},\ \beta\ge 3.
\end{equation}
\end{hypothesis}

\begin{remark}\label{exemplesOp}
  We easily check that the following operators satisfy Hypothesis \ref{hyp1}:
  \begin{enumerate}
    \item The purely dispersive operator $L_{\alpha+1}=-D_x^\alpha \partial_x$, $\alpha>0$.
    \item The Whitham operator with symbol $\omega(\xi)=\xi\left( \frac{\tanh (\xi)}{\xi} \right)^{\frac12}\left(1+\tau\xi^2\right)^{\frac12}$, $\tau>0$ for $\alpha=1/2$.
    \item The linear Intermediate Long Wave operator $L_{\alpha+1}=\partial_x D_x \coth(D_x)$ for $\alpha=1$.
  \end{enumerate}
\end{remark}

In this article, we show that the initial value problem (IVP) associated with \eqref{dpB} is locally well-posed in $H^s(\mathbb R)$ for $s>\frac32-\frac{5\alpha}4$ when $0<\alpha \le 1$, which improves Linares, Pilod and Saut's result in \cite{LiPiSa1}.

\begin{theorem} \label{maintheo}
Assume that $L_{\alpha+1}$ satisfies Hypothesis \ref{hyp1} with $0<\alpha \le 1$ and let $s>s_\alpha = \frac 32- \frac{5\alpha} 4$. Then, for any $u_0 \in H^s(\mathbb R)$, there exist $T=T(\|u_0\|_{H^s})>0$ and a unique solution $u$ of the IVP associated with \eqref{dpB} in the class
\begin{equation} \label{maintheo.100}
C([0,T] : H^s(\mathbb R)) \cap X^{s-1,1}_T \cap L^2(0,T : W^{s-s_{\alpha}+(1-\alpha)_-,\infty}(\mathbb R)) \, .
\end{equation}

Moreover, for any $0<T'<T$, there exists a neighborhood $\mathcal{U}$ of $u_0$ in $H^s(\mathbb R)$ such that the flow-map data solution
 $ v_0\mapsto v $ is continuous from $  \mathcal{U} $ into $  C([0,T'] : H^s(\mathbb R))$.
\end{theorem}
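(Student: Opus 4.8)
Since the flow map of \eqref{dpB} cannot be smooth (by the argument of \cite{MoSaTz}), a fixed point scheme on the Duhamel formula is excluded, and we argue by compactness. The plan is: first, for $u_0\in H^\infty(\R)$, construct smooth solutions and establish \emph{a priori} bounds showing they exist on a common interval $[0,T]$ with $T=T(\|u_0\|_{H^s})$; second, estimate the difference of two solutions one derivative below; third, regularize general data, pass to the limit, and run a Bona--Smith argument for the convergence and the continuity of the flow. The \emph{a priori} bound is produced by closing a system of three estimates for
\[
M_T(u)=\|u\|_{L^\infty_TH^s_x}+\|u\|_{X^{s-1,1}_T}+\|u\|_{L^2_TW^{s-s_\alpha+(1-\alpha)_-,\infty}_x},
\]
namely (i) the linear estimates in Bourgain's spaces, (ii) a refined Strichartz estimate, and (iii) an energy estimate of Molinet--Vento type \cite{MV}. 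The crux is that the smoothing gain coming from the $\alpha$-dispersion and the modulation gain coming from the resonance function reinforce one another; the improvement over \cite{LiPiSa1} results from combining both, rather than relying on energy estimates and refined Strichartz estimates separately.

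The Bourgain estimate is routine: the standard linear estimate in $X^{s-1,1}_T$ reduces matters to $\|\partial_x(u^2)\|_{L^2_TH^{s-1}_x}\lesssim\|u^2\|_{L^2_TH^s_x}\lesssim\|u\|_{L^\infty_TH^s_x}\|u\|_{L^2_TL^\infty_x}$ by the fractional Leibniz rule ($s>s_\alpha>0$), the last factor being controlled by the Strichartz norm. For (ii), one localizes $u$ at dyadic frequency $N$, chops $[0,T]$ into subintervals of length $N^{-\theta}$ with $\theta=\theta(\alpha)>0$ well chosen, and on each subinterval expands $P_Nu$ by Duhamel as a free evolution perturbed by the nonlinearity; applying on each piece the $L^4_tL^\infty_x$ Strichartz estimate for a Fourier multiplier of order $\alpha+1$ (which carries an $\alpha$-dependent loss compared with the Benjamin--Ono case $\alpha=1$) and summing over the $\sim TN^{\theta}$ subintervals and then over $N$ bounds $\|u\|_{L^2_TW^{s-s_\alpha+(1-\alpha)_-,\infty}_x}$ by $\|u_0\|_{H^s}$ plus terms controlled by $M_T(u)$.

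The heart of the matter is the energy estimate (iii). Starting from \eqref{energy} for $P_Nu$ and the paraproduct decomposition \eqref{energy2}, one integrates by parts (i.e. invokes commutator estimates) so that $\partial_x$ falls on the lowest-frequency factor. The $u_{\gtrsim N}u_{\gtrsim N}$ contribution is benign and is estimated directly with the $X^{s-1,1}_T$ and $L^2_TW^{\cdot,\infty}_x$ norms. In the high$\times$low contribution $\int \partial_xP_N(u_{\ll N}u)P_Nu$, the leading commutator creates a partial cancellation, but the remainder cannot be controlled by the Strichartz norm alone in our range of $s$. One therefore extends the functions to $t\in\R$, decomposes each factor dyadically in its modulation variable, and uses that on the support of the resulting trilinear form the resonance function satisfies $|\Omega|\sim LN^\alpha$, where $L\ll N$ is the low and the two high frequencies are $\sim N$; placing the factor of highest modulation in $X^{s-1,1}$ then trades $\sim LN^\alpha$ derivatives. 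The careful bookkeeping --- distributing the three factors among the $L^\infty_TL^2_x$, $X^{s-1,1}$ and $L^2_TW^{\cdot,\infty}_x$ norms in the most favourable way for each frequency regime, and summing the dyadic blocks with the right weights --- closes the estimate precisely when $s>s_\alpha$; this is where the parameter $\alpha$, which governs both the size of $|\Omega|$ and the Strichartz index, produces the threshold $\tfrac32-\tfrac{5\alpha}4$. The contribution of the sharp cutoff $1_{(0,t)}$, which is not bounded on $X^{s-1,1}$, is absorbed by a separate commutator-type lemma as in \cite{MV}. Collecting (i)--(iii) and extracting a positive power of $T$ from the worst nonlinear terms by H\"older in time, a continuity argument yields $M_T(u)\lesssim\|u_0\|_{H^s}$ on $[0,T]$ with $T=T(\|u_0\|_{H^s})>0$, and in particular the smooth solutions extend to this common interval.

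For two solutions $u_1,u_2$ in the class \eqref{maintheo.100}, the difference $v=u_1-u_2$ solves $\partial_tv+L_{\alpha+1}v=\partial_x\big(v(u_1+u_2)\big)$, which is linear in the rough factor $v$; running the same three estimates one derivative lower gives $\|v\|_{L^\infty_TH^{s-1}_x}\lesssim\|v(0)\|_{H^{s-1}}$, with constant depending only on $\sup_i M_T(u_i)$, whence uniqueness in \eqref{maintheo.100} and Lipschitz dependence in the $H^{s-1}$-topology. Finally, for $u_0\in H^s$ one regularizes the data, obtains smooth solutions uniformly bounded in \eqref{maintheo.100} and Cauchy in $C([0,T]:H^{s-1})$; interpolation yields convergence in $C([0,T]:H^{s'})$ for every $s'<s$, weak-$\ast$ compactness places the limit in \eqref{maintheo.100} and shows it solves \eqref{dpB}, and a Bona--Smith argument, together with the difference estimate, upgrades the convergence and gives continuity of the flow in $C([0,T']:H^s)$ for $0<T'<T$. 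I expect the trilinear energy estimate of (iii) --- juggling the derivative loss against the modulation and smoothing gains in every frequency configuration while handling the sharp time cutoff --- to be by far the most delicate step, with the refined Strichartz estimate of (ii) the second main point.
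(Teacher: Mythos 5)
Your outline reproduces the general architecture of the paper (Bourgain-space bilinear estimate, refined Strichartz, energy method, regularization and passage to the limit), but the step you call ``careful bookkeeping'' in (iii) is exactly where the argument breaks down, and the paper's main new idea is missing. For the high--low term $\int \partial_x P_N(u_{\ll N}u)\,P_N u$, placing the factor of highest modulation in $X^{s-1,1}$ only buys the factor $|\Omega|N^{-1}\sim N_{\min}N^{\alpha-1}$, and for $\alpha<1$ this is \emph{not} enough to recover the derivative lost by giving up one unit of regularity; this is precisely the obstruction that confines the Molinet--Vento method of \cite{MV} to $\alpha\ge 1$ (with threshold $1-\frac\alpha2$, not $\frac32-\frac{5\alpha}4$). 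The paper does not close this term by modulation analysis at all: it adds a cubic correction $\mathcal{E}^1_N(u)$ to the energy, with symbol $\chi_1/\Omega_2$, chosen so that the contribution of its time derivative coming from the \emph{linear} part of the equation cancels the high--low term identically; one then has to estimate the resulting quartic terms $\mathcal{K}_N^1,\mathcal{K}_N^{31},\mathcal{K}_N^{32},\mathcal{K}_N^{33}$, which carry an extra spatial derivative, and a further structural cancellation between the two worst high--low quartic contributions (together with the generalized Coifman--Meyer theorem for Marcinkiewicz-type symbols, needed because the weights $\chi_1/\Omega_2$ are not H\"ormander--Mikhlin and the estimates are run in mixed $L^2_TL^\infty_x$ norms) is what produces the threshold $s_\alpha=\frac32-\frac{5\alpha}4$. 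Without the modified energy your scheme cannot reach this threshold, so the central claim of your step (iii) is unsupported.

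The difference estimate is a second genuine gap. You propose to bound $w=u_1-u_2$ in $H^{s-1}$ by ``running the same three estimates one derivative lower'' and to get Lipschitz dependence there; but the equation $\partial_t w-L_{\alpha+1}w=\partial_x(zw)$ is not symmetric in $w$, and the term where $z$ carries the high frequency and $w$ the low one cannot be tamed by commutation, nor does the cubic correction used for a single solution apply verbatim. The paper is forced to (a) work at \emph{negative} regularity $\sigma\in\,]-\frac12+\frac\alpha4,\min(0,s-2+\frac32\alpha)[$ (note that $s-1$ lies above this range whenever $\alpha\le\frac23$), (b) add a weight on low frequencies (the $\overline{H}^\sigma$, $\overline{Z}^\sigma$ spaces) and a second correction term $\widetilde{\mathcal E}^2_N$, and (c) replace $X^{\sigma-1,1}$ by the sum space $F^{\sigma,\frac12}=X^{\sigma-1,1}+X^{\sigma,(\frac12)_+}$ because the bilinear estimate fails at negative regularity. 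Consequently one only gets a Lipschitz bound for differences of solutions whose data share the same low-frequency part (as in Ionescu--Kenig), and the convergence of the regularized solutions $u_n=S(t)P_{\le n}u_0$ in $C([0,T];H^s)$ is then obtained not by a classical Bona--Smith interpolation but by combining this weak Lipschitz bound with a uniform decay estimate on the high spatial frequencies coming from the modified energy (Corollary \ref{coro-ee}); relatedly, since the equation is $L^2$-supercritical for $\alpha<\frac12$, one cannot extract a power of $T$ from all nonlinear terms by H\"older in time as you suggest, and the paper instead splits frequencies at $N_0$, gaining $T$ on the low part and negative powers of $N_0$ on the high part.
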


\begin{remark} In the case $\alpha=1$ and $L_{\alpha+1}=-D_x^\alpha\partial_x$, our result provides a proof of the local well-posedness for BO in $H^{\frac14+}(\mathbb R)$. In other words, we recover Burq and Planchon's result in \cite{BuPl} without using a gauge transformation.
\end{remark}

If we assume moreover that the symbol $\omega_{\alpha+1}$ satisfies
\begin{equation}\label{hyp2}
  |\omega_{\alpha+1}(\xi)|\lesssim |\xi| \text{ for } |\xi| \lesssim 1,
\end{equation}
we easily see that the Hamiltonian
$$
H_{\alpha+1}(u) = \int_\R (\frac 12 |\Lambda^{\alpha/2}u(x,t)|^2+\frac 13 u^3(x,t))dx
$$
where $\Lambda^{\alpha/2}$ is the space Fourier multiplier defined by
$$
\widehat{\Lambda^{\alpha/2}v}(\xi) = \left|\frac{\omega_{\alpha+1}(\xi)}{\xi} \right|^{1/2} \hat{v}(\xi),
$$
as well as \eqref{M} are conserved by the flow associated to \eqref{dpB}. Iterating Theorem \ref{maintheo}, we obtain global well-posedness as soon as $\alpha>\frac67$.

\begin{corollary}
Assume that $L_{\alpha+1}$ satisfies Hypothesis \ref{hyp1} and \eqref{hyp2} with $\frac 67 < \alpha \le 1$. Then the Cauchy problem associated with \eqref{dpB} is globally well-posed in the energy space $H^{\frac\alpha2}(\R)$.
\end{corollary}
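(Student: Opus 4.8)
The plan is to derive the Corollary from Theorem~\ref{maintheo} by the standard iteration (continuation) argument, using the conservation laws to prevent the local existence time from shrinking to zero. First I would observe that under the additional hypothesis \eqref{hyp2}, the symbol $|\omega_{\alpha+1}(\xi)/\xi|^{1/2}$ is bounded for $|\xi|\lesssim 1$ and behaves like $|\xi|^{\alpha/2}$ for $|\xi|\gtrsim 1$ by \eqref{hyp1.1}, so that $\|\Lambda^{\alpha/2}v\|_{L^2}^2 \sim \|v\|_{\dot H^{\alpha/2}}^2$ up to a controlled low-frequency contribution bounded by $\|v\|_{L^2}^2$; hence the energy space is exactly $H^{\alpha/2}(\R)$ with norm equivalent to $\big(M(v)+H_{\alpha+1}(v)\big)$ corrected by lower-order terms. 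Since $\frac67<\alpha\le1$, we have $\frac\alpha2>s_\alpha=\frac32-\frac{5\alpha}4$ (this is exactly the inequality $\frac\alpha2>\frac32-\frac{5\alpha}4 \iff \frac{7\alpha}4>\frac32 \iff \alpha>\frac67$), so Theorem~\ref{maintheo} applies at regularity $s=\frac\alpha2$, yielding a unique local solution on $[0,T]$ with $T=T(\|u_0\|_{H^{\alpha/2}})$ in the class \eqref{maintheo.100}.

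Next I would justify that the conservation laws $M$ and $H_{\alpha+1}$ hold for these $H^{\alpha/2}$-solutions and not merely formally. For smooth solutions this is a direct computation using the structure of \eqref{dpB}; for general data one approximates $u_0$ by smooth data $u_0^n\to u_0$ in $H^{\alpha/2}$, uses the continuity of the flow map from Theorem~\ref{maintheo} together with persistence of regularity to pass to the limit, and invokes the Sobolev embedding $H^{\alpha/2}(\R)\hookrightarrow L^3(\R)$ (valid since $\alpha/2\ge 1/2-1/3=1/6$, which holds because $\alpha>6/7>1/3$) to control the cubic term $\int u^3$. This gives, for all $t$ in the interval of existence,
\begin{equation*}
M(u(t))=M(u_0),\qquad H_{\alpha+1}(u(t))=H_{\alpha+1}(u_0).
\end{equation*}
Combining these with the Gagliardo--Nirenberg inequality $\|v\|_{L^3}^3\lesssim \|v\|_{\dot H^{\alpha/2}}^{a}\|v\|_{L^2}^{3-a}$ with $a=\frac{3}{2}\cdot\frac{1}{\alpha/2}\cdot(\frac12-\frac13)=\frac{1}{\alpha}<2$ (again using $\alpha>6/7$, so the power of $\|\Lambda^{\alpha/2}u\|_{L^2}$ is strictly subquadratic), Young's inequality absorbs the cubic term into the quadratic one and yields an a priori bound
\begin{equation*}
\|u(t)\|_{H^{\alpha/2}}\le C\big(M(u_0),H_{\alpha+1}(u_0)\big)<\infty
\end{equation*}
uniform in $t$.

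Finally I would run the continuation argument: starting from $u_0$, Theorem~\ref{maintheo} gives a solution on $[0,T_0]$ with $T_0=T(\|u_0\|_{H^{\alpha/2}})$; then restart from $u(T_0)$, whose $H^{\alpha/2}$-norm is controlled by the same constant $C(M(u_0),H_{\alpha+1}(u_0))$, hence the new existence time is bounded below by $T(C(M(u_0),H_{\alpha+1}(u_0)))>0$, a quantity independent of the iteration step. Iterating, the solution extends to $[0,T]$ for every $T>0$; uniqueness in the class \eqref{maintheo.100} on each subinterval and the local continuity of the flow map from Theorem~\ref{maintheo} (applied on $[0,T']$ for $T'<T$ at each step and glued together) upgrade this to global well-posedness in $H^{\alpha/2}(\R)$, and by time reversibility $t\mapsto -t$, $u(x,t)\mapsto u(-x,-t)$ (which preserves Hypothesis~\ref{hyp1} since $\omega_{\alpha+1}$ is odd), the solution is global for negative times as well. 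The main obstacle is the rigorous justification of the conservation laws at the low regularity $s=\alpha/2$ — at this level one cannot simply differentiate $M(u(t))$ and $H_{\alpha+1}(u(t))$ in time, so one must argue by smooth approximation together with the continuity statement of Theorem~\ref{maintheo}, being careful that the approximating solutions remain on a common time interval; everything else is a routine coercivity-plus-iteration argument.
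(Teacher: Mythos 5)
Your proposal is correct and follows essentially the same route the paper intends (the paper gives no more detail than ``iterating Theorem~\ref{maintheo}''): since $\frac{\alpha}{2}>s_\alpha$ exactly when $\alpha>\frac67$, the conserved quantities $M$ and $H_{\alpha+1}$ --- coercive thanks to Hypothesis~\ref{hyp1} and \eqref{hyp2}, with Gagliardo--Nirenberg and Young absorbing the cubic term --- give a uniform a priori $H^{\frac\alpha2}$ bound, and one iterates the local result whose existence time depends only on $\|u_0\|_{H^{\frac\alpha2}}$, justifying the conservation laws at this regularity by smooth approximation and the continuity of the flow map. The only blemish is cosmetic: your displayed formula for the Gagliardo--Nirenberg exponent evaluates to $\frac{1}{2\alpha}$ rather than the correct $a=\frac{1}{\alpha}$, but both are $<2$ for $\alpha>\frac67$, so the absorption step and the conclusion are unaffected.
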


\begin{remark}
  The operators defined in Remark \ref{exemplesOp} also satisfy assumption \eqref{hyp2}.
\end{remark}

\begin{remark}
Based on numerical computations by Klein and Saut \cite{KlSa}, the global well-posedness  of \eqref{fKdV} was conjectured \cite{KlSa,LiPiSa1} in the $L^2$-subcritical case $\alpha>\frac12$. Here, we answer to part of this conjecture when $\alpha>\frac67$. Up to our knowledge, this is the first global existence result for $ \alpha<1$.
\end{remark}

\begin{remark}
It would be interesting to obtain results on the dispersion decay of the solutions associated to small data for \eqref{fKdV} with low dispersion. Some progress in this direction were recently done by Ifrim and Tataru \cite{IfTa} for the Benjamin-Ono equation.\footnote{Note also that the authors give another proof of the well-posedness of the Benjamin-Ono equation in $L^2$ without using the $X^{s,b}$ structure but still based on Tao's renormalization argument together with modified energies.}
\end{remark}

\begin{remark}
In \cite{LiPiSa2}, Linares, Pilod and Saut showed that the solitary waves associated to \eqref{fKdV} are orbitally stable in the energy space $H^{\frac{\alpha}2}(\mathbb R)$ as soon as $\alpha>\frac12$, conditionally to the global well-posedness in $H^{\frac{\alpha}2}$ (see Remark 2.1 in \cite{LiPiSa2}). We also refer to Arnesen \cite{Ar} and Angulo \cite{An} for other proofs of this result. Theorem 2.14 in \cite{LiPiSa2} combined with Theorem \ref{maintheo} provides then a complete orbital stability result in the energy space as soon as $\alpha>\frac67$.
\end{remark}

\medskip

Now, we discuss the main ingredients in the proof of Theorem \ref{maintheo}. Since it is not clear wether one can take advantage of a gauge transformation in the case $\alpha<1$ or not, we elect to follow the energy method introduced in \cite{MV}. However, we need to add several key ingredients.

\smallskip
Firstly, in order to close the bilinear estimate \eqref{bilinear} in $H^s(\mathbb R)$ for $s \le \frac12$, we use the  norm $\|\cdot\|_{L^2_TL^{\infty}_x}$, which is in turn estimated by using the refined Strichartz estimate as in \cite{KoTz,KK,LiPiSa1}. Then, we can control the last term on the right-hand side of \eqref{bilinear} by using the fractional Leibniz rule as $\|J^s_x(u^2)\|_{L^{\infty}_TH^s_x} \lesssim \|u\|_{L^2_TL^{\infty}_x}\|J^s_xu\|_{L^{\infty}_TL^2_x}$.

\smallskip
The norm $\|\cdot\|_{L^2_TL^{\infty}_x}$ is also an important ingredient to close the energy estimate \eqref{energy}. This creates a serious technical difficulty. Indeed to handle some commutators with those norms, we need then to use a  generalized Coifman-Meyer theorem for multilinear Fourier multipliers $m(\xi_1,\cdots,\xi_n)$ satisfying the Marcinkiewicz type condition
$$|\partial^{\beta}m (\xi_1,\cdots,\xi_n)| \lesssim \prod_{i=1}^n  |\xi_i|^{-\beta_i},\quad \forall \, \beta \in \mathbb N^n \, .$$
Such a theorem was proved by Muscalu, Pipher, Tao and Thiele \cite{MuPiTaoThi} in the bilinear case and can be deduced from a result of Bernicot \cite{B} in the multilinear case (see Section 2.3 for more details).

\smallskip
With this theorem in hand, we can estimate the first term of \eqref{energy2}
  corresponding to the \emph{high-high} frequency interactions by using the norms $\|u\|_{X^{s-1,1}_T}$ and $\|J_x^{1-\alpha}u\|_{L^2_TL^{\infty}_x}$ as explained above. For the second term, we would like to integrate by parts and use the $ \|\cdot \|_{X^{s-1,1}}$-norm as in \cite{MV} but  the resonance relation $|\Omega| \sim N_{min}N_{max}^{\alpha}$ would not be sufficient to recover the ``big'' derivative we lost by using this norm. This is one of the main difficulty to work at low dispersion $\alpha<1$. For this reason, we modify the energy by adding a cubic term, constructed so that the contribution of its time derivative coming from the linear part of the equation cancels out the \emph{high-low} frequency term. It is worth noticing that this modified energy is defined in Fourier variables in the same spirit of the modified energy in the I-method \cite{CKSTT}. We also refer to our recent works \cite{MoPiVe1,MoPiVe2} on the modified Korteweg-de Vries equation both on the line and on the torus for a similar strategy using a modified energy. Note that we gain a factor
  $ N_{min}N_{max}^{\alpha} $ on the additional cubic term. On the other hand, the contribution of its time derivative  coming from the nonlinear part of the equation   is of order four and contains one more spatial derivative. For $ \alpha<1$, it is clear that when this spatial derivative falls on the term with the highest spatial frequencies we should lose $ N_{min}^{-1}N_{max}^{1-\alpha}$ which is not acceptable for some high-low frequency interaction terms. The crucial observation here is that there is
  a fundamental cancellation between two of those terms exhibiting the badest high-low frequency interactions.

\smallskip
Those ingredients are enough to derive a suitable \textit{a priori} estimate for a solution of \eqref{dpB}. However, things are more complicated  to get an estimate for the difference of two solutions $u_1$ and $u_2$, since the corresponding equation lacks of symmetry. For this reason, we are only able to derive an energy estimate for the difference $w=u_1-u_2$ at low regularity $H^{\sigma}$, $\sigma<0$, and with an additional weight on low frequency. This is sufficient for our purposes, since we only need this estimate for the difference of solutions having the same low frequency part in order to prove the uniqueness and the continuity of the flow map (c.f. \cite{IK}).
  However, the bilinear estimate is not straightforward as before when working with negative regularity $H^{\sigma}$, $\sigma<0$. To overcome this last difficulty, we follow the strategy in \cite{MV} and work with the sum space $F^{s,\frac12}=X^{s-1,1}+X^{s,(\frac12)_+}$ instead of working with $X^{s-1,1}$ only.

\smallskip
Finally, it is worth noticing that even in the particular case of purely power dispersion where scaling invariance occurs, equation \eqref{dpB} is $ L^2$-super critical for $ \alpha<1/2$ and thus we will not be able to use a classical scaling argument to prove the local existence result. Roughly speaking, our method consists in cutting  the spatial frequencies of the solution into two parts $ P_{\le N_0} $ and $ P_{>N_0} $.
We gain some positive factor of the time $ T$ (but lose some positive factor of $ N_0$) when estimating the low frequency part  whereas we gain a negative factor of $ N_0 $ when estimating the high frequency part. This will allow us to close our estimates on $ ]0,T[$ for  smooth solution
 to \eqref{fKdV}  by taking $ N_0$ big enough and $ T >0 $ small enough.
Finally, the continuity of the solution as well as the continuity with respect to initial data will be proved by using a kind of uniform decay estimate on the high spatial frequencies of the solution.

\smallskip
The paper is organized as follows: in Section 2, we introduce the notation, define the function spaces and state some important preliminary estimates related the generalized Coifman-Meyer theorem. In Section 3, we derive multilinear estimates at the $L^2$-level. Those estimates will be used in Sections 4 and 5 to prove estimates for the solution and the difference of two solutions of the equation. Finally, we give the proof of Theorem \ref{maintheo} in Section 6.

\subsection*{Acknowledgements}
The authors would like to thank Jean-Claude Saut for  constant encouragements. They are also grateful to Terence Tao for helpful comments on the generalized Coifman-Meyer theorem in Section 2.3. L.M and S.V were partially
supported by the ANR project GEO-DISP. D.P. was partially supported by CNPq/Brasil grant 3035051/2016-7.

\bigskip
\bigskip

\section{Notation, function spaces and preliminary estimates}
\subsection{Notation}\label{notation}
For any positive numbers $a$ and $b$, the notation $a\lesssim b$ means that there exists a positive constant $C$ such that $a\le Cb$, and we denote $a\sim b$ when $a\lesssim b$ and $b\lesssim a$. We also write $a\ll b$ if the estimate $b\lesssim a$ does not hold. If $x\in\R$, $x_+$, respectively $x_-$ will denote a number slightly greater, respectively lesser, than $x$. We also set $\cro{x} = (1+x^2)^{\frac12}$.

For $u=u(x,t)\in \mathcal{S}'(\R^2)$, $\F u=\hat{u}$ will denote its space-time Fourier transform, whereas $\F_xu$, respectively $\F_tu$ will denote its Fourier transform in space, respectively in time. For $s\in\R$, we define the Bessel and Riesz potentials of order $-s$, $J^s_x$ and $D^x_x$, by
$$
J^s_xu = \F^{-1}_x(\cro{\xi}^s\F_xu) \textrm{ and } D^s_xu = \F^{-1}_x(|\xi|^s \F_xu).
$$

Throughout the paper, we fix a smooth cutoff function $\eta$ such that
\begin{equation}\label{eta}
\eta \in C_0^{\infty}(\mathbb R), \quad 0 \le \eta \le 1, \quad
\eta_{|_{[-1,1]}}=1 \quad \mbox{and} \quad  \mbox{supp}(\eta)
\subset [-2,2].
\end{equation}
We set  $ \phi(\xi):=\eta(\xi)-\eta(2\xi)$. Let  $\widetilde{\phi} \in C_0^{\infty}(\mathbb R)$ be such that $\widetilde{\phi}_{|_{\pm [\frac12,2]}} \equiv 1$ and $\text{supp} \, (\widetilde{\phi}) \subset \pm [\frac14,4]$. For $l \in \mathbb Z$, we define
$$
\phi_{2^l}(\xi):=\phi(2^{-l}\xi), \quad \ \widetilde{\phi}_{2^l}(\xi)=\phi_{\sim 2^l}(\xi):=\widetilde{\phi}(2^{-l}\xi) \, ,
$$
and, for $ l\in \N^* $,
$$
\psi_{2^{l}}(\xi,\tau)=\phi_{2^{l}}(\tau-\omega_{\alpha+1}(\xi)).
$$
By convention, we also denote
$$
\psi_{{1}}(\xi,\tau):=\eta(2(\tau-\omega_{\alpha+1}(\xi))).
$$
Any summations over capitalized variables such as $N$ or $L$ are presumed to be dyadic. Unless stated otherwise, we work with homogeneous dyadic decomposition for the space frequency variables and non-homogeneous decompositions for modulation variables, i.e. these variables range over numbers of the form $\{2^k : k\in\mathbb Z\}$ and $\{2^k : k\in\mathbb N\}$ respectively.  Then, we have that
$$
\sum_{N>0}\phi_N(\xi)=1\quad \forall \xi\in \R^*, \quad \mbox{supp} \, (\phi_N) \subset
\{\frac{N}{2}\le |\xi| \le 2N\}, \ N \in \{2^k : k\in \mathbb Z\},
$$
and
$$
\sum_{L\ge 1}\psi_L(\xi,\tau)=1 \quad \forall (\xi,\tau)\in\R^2, \quad  L \in \{2^k : k\in \mathbb N\}.
$$

Let us define the Littlewood-Paley multipliers by
$$
P_Nu=\mathcal{F}^{-1}_x\big(\phi_N\mathcal{F}_xu\big), \quad P_{ \sim N}u=\mathcal{F}^{-1}_x\big(\widetilde{\phi}_N\mathcal{F}_xu\big) \quad
Q_Lu=\mathcal{F}^{-1}\big(\psi_L\mathcal{F}u\big),
$$
 $P_{\ge N}:=\sum_{K \ge N} P_{K}$,  $P_{\gtrsim N}:=\sum_{K \ge N} P_{\sim K}$, $P_{\le N}:=\sum_{K \le N} P_{K}$, $P_{\lesssim N}:=\sum_{K \le N} P_{\sim K}$, $Q_{\ge L}:=\sum_{K \ge L} Q_{K}$ and   $Q_{\le L}:=\sum_{K \le L} Q_{K}$. For the sake of brevity we often write $u_N=P_Nu$, $u_{\le N}=P_{\le N}u$, $\cdots$

\smallskip
Finally, if $N_1$, $N_2$ are two dyadic numbers, we denote $N_1 \vee N_2 =\max\{N_1,N_2\}$ and $N_1 \wedge N_2 =\min\{N_1,N_2\}$.

\subsection{Function spaces}
For $1\le p\le\infty$, $L^p$ denotes the usual Lebesgue space and for $s\in\R$, $H^s$ is the $L^2$-based Sobolev space with norm $\|f\|_{H^s}=\|J_x^s f\|_{L^2}$.
If $B$ is a space of functions on $\R$, $T>0$ and $1\le p\le\infty$, we define the spaces $L^p_TB_x$ and $L^p_tB_x$ by the norms
$$
\|f\|_{L^p_TB_x} = \left\| \|f\|_{B}\right\|_{L^p([0,T])} \ \textrm{ and } \|f\|_{L^p_tB_x} = \left\| \|f\|_B \right\|_{L^p(\R)}.
$$
If $M$ is a normed space of  functions, we will denote $\overline{M}$ its subspace associated with the weighted norm:
$$
\|u\|_{\overline{M}} = \|\F_x^{-1}( \cro{|\xi|^{-1}}\F_xu(\xi))\|_M.
$$

For $s,b\in\R$ we introduce the Bourgain space $X^{s,b}$ associated with the dispersive Burgers' equation as the completion of the Schwartz space $\mathcal{S}(\R^2)$ under the norm
$$
\|u\|_{X^{s,b}} = \|\cro{\xi}^s \cro{\tau-\omega_{\alpha+1}(\xi)}^b \F_{tx}u \|_{L^2}.
$$
We will also work in the sum space $F^{s,b} = X^{s-1,b+\frac12}+X^{s,b_+}$ endowed with the norm
\begin{equation} \label{F}
\|u\|_{F^{s,b}} = \inf \big\{ \|u_1\|_{X^{s-1,b+\frac12}} + \|u_2\|_{X^{s,b_+}} \, : \, u=u_1+u_2\big\}.
\end{equation}
For $s\in\R$, we define our resolution space $Y^s$ by the norm
\begin{equation}\label{Y}
\|u\|_{Y^s} = \|u\|_{L^\infty_t H^s_x} + \|u\|_{X^{s-1,1}} + \|J_x^{(s-s_\alpha)+(1-\alpha)_-}u\|_{L^2_t L^\infty_x}.
\end{equation}
We will also need to consider the space $Z^s$ equipped with the norm
$$
\|u\|_{Z^s} = \|u\|_{L^\infty_t H^s_x} + \|u\|_{F^{s,\frac12}} + \|J^{(s-s_\alpha)+(1-\alpha)_-}u\|_{L^2_t L^\infty_x}.
$$
Finally, we will use restriction in time versions of these spaces. Let $T>0$ be a positive time and $M$ be a normed space of space-time functions. The restriction space $M_T$ will be the space of functions $u : \R\times ]0,T[\to\R$ satisfying
$$
\|u\|_{M_T} = \inf \{ \|\widetilde{u}\|_M \, : \, \widetilde{u} \, : \, \R\times\R\to\R, \, \widetilde{u}|_{\R\times ]0,T[} = u\} <\infty.
$$

\subsection{Generalized Coifman-Meyer theorem}\label{pseudoprodest}

\begin{definition} \label{defpseudo}
For $n\ge 1$ and $\chi$ a bounded measurable function on $\R^n$, we define the multilinear Fourier multiplier operator $\Pi^n_\chi$ on $\mathcal{S}(\R)^n$ by
\begin{equation}
 \Pi^n_\chi(f_1,\ldots, f_n))(x) = \int_{\mathbb R^n} \chi(\xi_1,\ldots,\xi_n)\prod_{j=1}^n\widehat{f}_j(\xi_j)e^{ix(\xi_1+\cdots \xi_n)} \, d\xi_1\cdots d\xi_n \, .
\end{equation}
\end{definition}

If $\sigma$ is a permutation of $\{1,\ldots , n\}$, then it is clear that
\begin{equation}\label{pseudoprodsym.1}
  \Pi_\chi^n(f_1, \ldots, f_n) = \Pi_{\chi_\sigma}^n (f_{\sigma(1)}, \ldots, f_{\sigma(n)})
\end{equation}
where $\chi_\sigma(\xi_1, \ldots, \xi_n) = \chi(\xi_{\sigma(1)}, \ldots, \xi_{\sigma(n)})$. For any $t>0$, we define $\R^n_t = \R^n\times ]0,t[$ and for $u_1,\ldots u_{n+1}\in \mathcal{S}(\R^2)$, we set
\begin{equation}\label{Gtdef}
  G_{t,\chi}^n(u_1, \ldots, u_{n+1}) = \int_{\R_t} \Pi_{\chi}^n(u_1,\ldots, u_n) u_{n+1} \, dxdt \, .
\end{equation}
When there is no risk of confusion, we will write $G_t^n = G_{t,\chi}^n$ with $\chi\in L^\infty(\R^n)$.

From Plancherel theorem, it is not too hard to check that
\begin{equation}\label{Gtsym.1}
  G_{t,\chi }^n(u_1, \ldots, u_{n+1}) = \int_{\R_t} \Pi_{\tilde{\chi}}^n (u_{n+1}, u_2, \ldots, u_{n}) u_1 \, dxdt \,
\end{equation}
where $\tilde{\chi}(\xi_1,\ldots, \xi_n) = \chi(-\sum_{i=1}^n\xi_i, \xi_2, \ldots, \xi_{n})$. We deduce from \eqref{pseudoprodsym.1}-\eqref{Gtsym.1} that
\begin{equation}\label{Gtsym.2}
  G_{t,\chi}^n(u_1, \ldots, u_{n+1}) = G_{t,\chi_{\sigma}}^n(u_{\sigma(1)},\ldots, u_{\sigma(n+1)})
\end{equation}
for any permutation $\sigma$ of $\{1, \ldots, n+1\}$ with an implicit symbol $\chi_{\sigma}\in L^\infty(\R^n)$ satisfying $\|\chi_{\sigma}\|_{L^{\infty}} \lesssim \|\chi\|_{L^{\infty}}$.

\medskip
The classical Coifman-Meyer theorem \cite{CoMe} states that if $\chi$ is smooth away from the origin and satisfies the H\"ormander-Milhin condition
\begin{equation} \label{HM}
|\partial^{\beta}\chi(\xi)| \lesssim |\xi|^{-\beta} \, ,
\end{equation}
for sufficiently many multi-indices $\beta \in \mathbb N^n$, then the operator $\Pi_{\chi}^n$ is bounded from $L^{p_1}(\mathbb R) \times \cdots \times L^{p_n}(\mathbb R)$ to $L^p(\mathbb R)$ and satisfies
\begin{equation} \label{CMtheo}
\|\Pi_{\chi}^n(f_1,\cdots,f_n)\|_{L^p} \lesssim \prod_{j=1}^n\|f_j\|_{L^{p_j}} \, ,
\end{equation}
as long as $1 < p_j \le +\infty$, $1 \le p < +\infty$ and $\frac1p=\frac1{p_1}+\cdots\frac1{p_n}$.

\medskip
In the sequel, we will need the following generalized version of Coifman-Meyer's theorem.

\begin{theorem}\label{pseudoprod}
Let $1 < p_1,\cdots,p_n < +\infty$ and $1 \le p < +\infty$ satisfy $\frac1p=\frac1{p_1}+\cdots\frac1{p_n}$.
Assume that $f_1,\ldots ,f_n\in\mathcal{S}(\R)$ are functions with Fourier variables supported in $\{|\xi|\sim N_i\}$ for some dyadic numbers $N_1,\ldots, N_n$.

Assume also that $\chi\in C^\infty(\R^n)$ satisfies the Marcinkiewicz type condition
\begin{equation}\label{pseudoprod.1}
  \forall \beta=(\beta_1,\ldots, \beta_n)\in \mathbb{N}^n, \quad |\partial^{\beta}\chi (\xi)| \lesssim \prod_{i=1}^n |\xi_i|^{-\beta_i},
\end{equation}
on the support of $\prod_{i=1}^n\hat{f}_i(\xi_i)$. Then,
\begin{equation}\label{pseudoprod.2}
\|\Pi_{\chi}^n(f_1,\cdots,f_n)\|_{L^p} \lesssim \prod_{j=1}^n\|f_j\|_{L^{p_j}} ,
\end{equation}
with an implicit constant that doesn't depend on $N_1,\ldots, N_n$.
\end{theorem}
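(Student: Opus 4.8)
The plan is to exploit the frequency localization of the $f_j$ to replace $\chi$ by a multiplier which, after a suitable anisotropic rescaling, is a Schwartz function supported in a \emph{fixed} compact set; such a multiplier automatically has an $L^1$ convolution kernel, and \eqref{pseudoprod.2} then follows from Minkowski's and H\"older's inequalities. Concretely, set $\widetilde{\chi}(\xi_1,\ldots,\xi_n):=\chi(\xi_1,\ldots,\xi_n)\prod_{i=1}^n\widetilde{\phi}_{N_i}(\xi_i)$. Since $\widetilde{\phi}_{N_i}\equiv 1$ on $\{|\xi_i|\sim N_i\}\supset\supp\widehat{f_i}$, one has $\Pi^n_\chi(f_1,\ldots,f_n)=\Pi^n_{\widetilde{\chi}}(f_1,\ldots,f_n)$, so it suffices to bound the latter. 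Expanding by the Leibniz rule and using both $|\partial^k\widetilde{\phi}_{N_i}(\xi_i)|\lesssim N_i^{-k}$ and hypothesis \eqref{pseudoprod.1} on $\supp\widetilde{\chi}\subset\prod_i\{|\xi_i|\sim N_i\}$, one checks that $\widetilde{\chi}\in C^\infty(\R^n)$ is supported in $\prod_i\{|\xi_i|\sim N_i\}$ and satisfies $|\partial^\beta\widetilde{\chi}(\xi)|\lesssim\prod_{i=1}^n N_i^{-\beta_i}$ for every $\beta\in\N^n$, with implicit constants independent of $N_1,\ldots,N_n$.

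The next step is to rescale. Put $m(\xi):=\widetilde{\chi}(N_1\xi_1,\ldots,N_n\xi_n)$. By the chain rule and the previous bound, $m$ is supported in the fixed box $\prod_i\{|\xi_i|\sim 1\}$ and satisfies $|\partial^\beta m(\xi)|\lesssim 1$ for all $\beta$, uniformly in $N_1,\ldots,N_n$. Hence $m$ is Schwartz with uniformly bounded seminorms, and integrating by parts gives $(1+|z|^2)^n|m^\vee(z)|\lesssim\sup_{|\beta|\le 2n}\|\partial^\beta m\|_{L^\infty}\lesssim 1$; since $(1+|z|^2)^{-n}\in L^1(\R^n)$ this yields $\|m^\vee\|_{L^1(\R^n)}\lesssim 1$. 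Undoing the dilation, $\widetilde{\chi}^\vee(z)=\big(\prod_i N_i\big)m^\vee(N_1 z_1,\ldots,N_n z_n)$, whence $\|\widetilde{\chi}^\vee\|_{L^1(\R^n)}=\|m^\vee\|_{L^1(\R^n)}\lesssim 1$, again uniformly in the $N_i$.

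To conclude, write $\widetilde{\chi}(\xi)=\int_{\R^n}\widetilde{\chi}^\vee(z)e^{iz\cdot\xi}\,dz$ and substitute into Definition \ref{defpseudo} to obtain the kernel representation
\[
\Pi^n_{\widetilde{\chi}}(f_1,\ldots,f_n)(x)=\int_{\R^n}\widetilde{\chi}^\vee(z)\prod_{j=1}^n f_j(x+z_j)\,dz .
\]
Then, by Minkowski's integral inequality in $L^p_x$, followed by the generalized H\"older inequality with $\frac1p=\sum_j\frac1{p_j}$ and the translation invariance of $L^{p_j}$,
\[
\|\Pi^n_{\widetilde{\chi}}(f_1,\ldots,f_n)\|_{L^p}\le\int_{\R^n}|\widetilde{\chi}^\vee(z)|\Big\|\prod_{j=1}^n f_j(\cdot+z_j)\Big\|_{L^p}dz\le\|\widetilde{\chi}^\vee\|_{L^1}\prod_{j=1}^n\|f_j\|_{L^{p_j}}\lesssim\prod_{j=1}^n\|f_j\|_{L^{p_j}},
\]
which is \eqref{pseudoprod.2}.

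I expect the only genuine difficulty to be the bookkeeping in the first two paragraphs: one must make sure that, through the Leibniz expansion of $\widetilde{\chi}$ and the anisotropic rescaling to $m$, every implicit constant stays uniform in the whole list $N_1,\ldots,N_n$ — which is exactly what \eqref{pseudoprod.1}, read as $|\partial^\beta\chi(\xi)|\lesssim\prod_i N_i^{-\beta_i}$ on the relevant support, provides. It is worth noting that this argument is self-contained and bypasses the classical Coifman--Meyer theory: frequency localization turns the multiplier, after rescaling, into a Schwartz function on a fixed compact set, which automatically has an $L^1$ kernel. Alternatively, since $\widetilde{\chi}$ is in particular a Marcinkiewicz-type multiplier, one could instead invoke the results of Muscalu--Pipher--Tao--Thiele \cite{MuPiTaoThi} in the bilinear case and of Bernicot \cite{B} in the multilinear case.
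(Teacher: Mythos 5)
Your proof is correct, and it takes a genuinely different route from the paper in the main step. Both arguments begin identically, replacing $\chi$ by $\widetilde{\chi}=\chi\prod_{i}\widetilde{\phi}_{N_i}$ and using Leibniz plus \eqref{pseudoprod.1} to get $|\partial^\beta\widetilde{\chi}|\lesssim\prod_i N_i^{-\beta_i}$ on $\prod_i\{|\xi_i|\sim N_i\}$ uniformly in the $N_i$. At that point the paper checks that $\widetilde{\chi}$ satisfies Bernicot's anisotropic condition \eqref{unifEst.1} with $\lambda=(\frac{N_n}{N_1},\ldots,\frac{N_n}{N_{n-1}},1)$ and concludes by citing Theorem \ref{unifEst} (\cite{B}), whereas you rescale $\xi_i\mapsto N_i\xi_i$ to a multiplier $m$ supported in a fixed box with uniformly bounded derivatives, deduce $\|\widetilde{\chi}^\vee\|_{L^1}=\|m^\vee\|_{L^1}\lesssim 1$ by integration by parts (the passage from $L^\infty$ to $L^1$ bounds on $(1-\Delta)^n m$ being justified by the fixed compact support), and finish with the kernel representation, Minkowski and H\"older. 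Your argument is self-contained, avoids the Coifman--Meyer/Bernicot machinery entirely, and in fact also covers endpoint exponents $p_j=\infty$; its key (and correct) observation is that for a \emph{single} frequency block $(N_1,\ldots,N_n)$ the Marcinkiewicz multiplier trivially has a uniformly $L^1$ kernel after anisotropic rescaling -- the deeper theorems of \cite{MuPiTaoThi,B} are only genuinely needed when summing over many blocks. What the paper's route buys is brevity given Bernicot's theorem and consistency with the uniform-paraproduct framework it invokes; what yours buys is elementarity and transparency of the uniformity in $N_1,\ldots,N_n$. The only minor point common to both proofs is that \eqref{pseudoprod.1} is stated on $\supp\prod_i\hat f_i$ but is used on the slightly larger support of $\prod_i\widetilde{\phi}_{N_i}$; this is the same harmless convention the paper adopts.
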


\begin{remark}
Condition \eqref{HM} is too restrictive for our purpose. For instance if $N_1\ll N_2$ are dyadic numbers and
  $$
  \chi(\xi_1,\xi_2) = \varphi_{N_1}(\xi_1)\varphi_{N_2}(\xi_2),
  $$
  then $\chi$ clearly satisfies condition \eqref{pseudoprod.1}, but $|\partial_{\xi_1}\chi(\xi_1,\xi_2)| \sim N_1^{-1} \gg N_2^{-1} \sim |(\xi_1,\xi_2)|^{-1}$, so that $\chi$ does not satisfy \eqref{HM}.
\end{remark}

\smallskip
Theorem \ref{pseudoprod} was proved by Muscalu, Pipher, Tao and Thiele \cite{MuPiTaoThi} in the case of bilinear Fourier multipliers\footnote{Note that even the extremal case where one the $p_i$ is equal to $+\infty$ is proved.} (in dimension $2$).

One could certainly prove Theorem \ref{pseudoprod} by extending the arguments in \cite{MuPiTaoThi} to the multilinear case\footnote{Personal communication by Terence Tao.}. Instead, we will deduce Theorem \ref{pseudoprod} as a Corollary of Bernicot's theorem in \cite{B}.

\begin{theorem}[\cite{B}, Theorem 1.3]\label{unifEst}
  Suppose $1<p_1,\ldots,p_n<\infty$, $1\le p<\infty$ and $1/p=1/p_1+\ldots +1/p_n$. Assume that $\chi\in C^\infty(\R^n)$ satisfies
  \begin{equation}\label{unifEst.1}
    \forall \beta=(\beta_1,\ldots, \beta_n)\in \mathbb{N}^n,\quad |\partial^{\beta}\chi (\xi)| \lesssim \frac{\prod_{i=1}^n |\lambda_i|^{\beta_i}} {d_\lambda(\xi,0)^{|\beta|}},
  \end{equation}
  for some $\lambda_1,\ldots, \lambda_n >0$ and where $d_\lambda$ is the metric defined by $d_\lambda(\xi,0)=\sum_{i=1}^n \lambda_i|\xi_i|$.
  Then we have for any smooth functions $f_1,\ldots, f_n\in \mathcal{S}(\R)$
  \begin{equation}
    \|\Pi^n_\chi(f_1,\ldots ,f_n)\|_{L^p} \lesssim \prod_{i=1}^n\|f_i\|_{L^{p_i}} ,
  \end{equation}
  with an implicit constant that doesn't depend on $\lambda$.
\end{theorem}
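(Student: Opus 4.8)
The plan is to carry out the classical proof of the Coifman--Meyer theorem --- Littlewood--Paley decomposition of the symbol, Fourier-series (wave-packet) expansion of each dyadic piece, and reduction to model paraproducts controlled by square and maximal functions --- but adapted to the \emph{anisotropic} dyadic structure $\{\max_i\lambda_i|\xi_i|\sim 2^k\}$ dictated by the metric $d_\lambda$, keeping track at every step that no constant depends on $\lambda$. Two preliminary observations. Since $d_\lambda(\xi,0)=\sum_j\lambda_j|\xi_j|\sim\max_j\lambda_j|\xi_j|$ (with a constant depending only on $n$), hypothesis \eqref{unifEst.1} may be rewritten as $|\partial^\beta\chi(\xi)|\lesssim_\beta\prod_i\lambda_i^{\beta_i}\big(\max_j\lambda_j|\xi_j|\big)^{-|\beta|}$. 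And when the $\lambda_i$ are all comparable the statement is immediate: then $\chi$ itself satisfies the H\"ormander--Mikhlin condition \eqref{HM} with $\lambda$-independent constants, so the claim follows from Coifman--Meyer. The whole difficulty lies in the degenerate regime where the $\lambda_i$ are vastly different.

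First I would fix a homogeneous partition of unity $1=\sum_{k\in\Z}\Psi_0(2^{-k}\Lambda\xi)$ on $\R^n\setminus\{0\}$, where $\Lambda\xi:=(\lambda_1\xi_1,\ldots,\lambda_n\xi_n)$ and $\supp\Psi_0\subset\{\tfrac12\le\max_i|\eta_i|\le 2\}$, and set $\chi_k:=\chi\cdot\Psi_0(2^{-k}\Lambda\,\cdot)$, so that $\supp\chi_k\subset\prod_i\{|\xi_i|\le 2^{k+1}/\lambda_i\}$. In the stretched variable $\eta=2^{-k}\Lambda\xi$ the hypothesis shows that $\chi_k$ is supported in a fixed bounded set with \emph{all} derivatives $O(1)$, uniformly in $k$ and $\lambda$. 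Periodizing over a fixed large cube then gives
\[
\chi_k(\xi)=\sum_{\vec m\in\Z^n}c^k_{\vec m}\;e^{2\pi i\,\vec m\cdot(2^{-k}\Lambda\xi)}\,\Theta(2^{-k}\Lambda\xi),\qquad |c^k_{\vec m}|\lesssim_N\langle\vec m\rangle^{-N}\quad\forall\,N,
\]
with a fixed tensor bump $\Theta=\prod_i\theta_i$ and implicit constants uniform in $(k,\lambda)$. Inserting this into Definition~\ref{defpseudo} yields
\[
\Pi^n_{\chi_k}(f_1,\ldots,f_n)(x)=\sum_{\vec m\in\Z^n}c^k_{\vec m}\prod_{i=1}^n\big(P^k_if_i\big)\big(x+2\pi m_i\lambda_i2^{-k}\big),
\]
where $P^k_i$ is the Littlewood--Paley-type projection with symbol $\theta_i(2^{-k}\lambda_i\xi_i)$, localized to $\{|\xi_i|\lesssim 2^k/\lambda_i\}$.

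Next I would absorb the modulations: the translation $x\mapsto x+2\pi m_i\lambda_i2^{-k}$ preserves every $L^{p_i}$ norm and, being a shift by $O(|m_i|)$ times the natural length scale $\lambda_i2^{-k}$ of $P^k_i$, costs at most a factor $(1+|m_i|)$ in each square-function and maximal-function estimate used below; summing the $c^k_{\vec m}$ against $\prod_i(1+|m_i|)$ (with $N$ large) it remains to bound the unmodulated model operator $T(f_1,\ldots,f_n):=\sum_{k\in\Z}\prod_{i=1}^nP^k_if_i$ from $\prod_iL^{p_i}(\R)$ into $L^p(\R)$, uniformly in $\lambda$. After Littlewood--Paley-refining each factor, $T$ is exactly a multilinear paraproduct of the type arising in the classical Coifman--Meyer argument. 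Splitting it according to the factor --- or, in case of a near-tie, the set of factors --- carrying the top actual frequency, one gets: when the top frequency is uniquely attained, say at $i_0$, the output is frequency-localized at that scale and $T$ is an ordinary paraproduct, bounded via the reverse square-function inequality $\|\sum_j\tilde\Delta_jF_j\|_{L^p}\lesssim\|(\sum_j|F_j|^2)^{1/2}\|_{L^p}$ (valid for $0<p<\infty$), the Littlewood--Paley square-function estimate on $L^{p_{i_0}}$, H\"older's inequality, and the Hardy--Littlewood / Fefferman--Stein maximal inequalities for the factors $i\ne i_0$; in the high--high case, one inserts an output Littlewood--Paley projection at the common top scale, moves it onto one of the tied factors by means of the symmetry relation \eqref{Gtsym.2}, and argues as before. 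Summing the $O_n(1)$ resulting pieces concludes.

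The one place requiring real care --- and the expected main obstacle --- is \emph{uniformity in $\lambda$}. Every harmonic-analytic ingredient invoked above (Littlewood--Paley square-function estimates on $L^q$, $1<q<\infty$; the Hardy--Littlewood and Fefferman--Stein vector-valued maximal inequalities; the reverse square-function / $H^p$ inequality) is invariant under dilations $\xi\mapsto t\xi$ and under translations, while the anisotropy enters only through the per-factor scales $2^{(\cdot)}/\lambda_i$ and the shifts $2\pi m_i\lambda_i2^{-k}$ --- which are precisely neutralized by this invariance together with the rapid decay of the $c^k_{\vec m}$. Within this scheme the genuinely delicate points are (i) the bookkeeping showing that the anisotropic shifts cost only polynomially in $\vec m$, uniformly in $(k,\lambda)$, so that the decay $\langle\vec m\rangle^{-N}$ wins; and (ii) the high--high case in the range $p<1$ (which does occur here, since $1/p=\sum_i1/p_i$ with $p_i>1$ only forces $p>1/n$), where duality on the output side is unavailable and one has to run the square-function argument directly at the level of $H^p$.
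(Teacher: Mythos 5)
The paper offers no proof of this statement: it is quoted verbatim from Bernicot \cite{B} and used as a black box to deduce Theorem \ref{pseudoprod}, so there is nothing internal to compare your argument against --- what you have written is a proof sketch of the cited external result. As such it follows the natural route: an anisotropic Littlewood--Paley decomposition adapted to $d_\lambda$, the rescaling $\eta=2^{-k}\Lambda\xi$ under which \eqref{unifEst.1} becomes a uniform $C^\infty$ bound on a fixed compact set, a Fourier-series expansion with rapidly decaying coefficients, and a reduction to anisotropic paraproducts controlled by (shifted) square and maximal functions. The rescaling computation is correct (on $\supp\Psi_0$ one has $d_\lambda(2^k\Lambda^{-1}\eta,0)\sim 2^k$, so all $\eta$-derivatives of the rescaled piece are $O(1)$ uniformly in $k$ and $\lambda$), and you are right that the translations by $2\pi m_i\lambda_i2^{-k}$ are harmless because they are measured in the natural unit of $P^k_i$ and cost only polynomially in $|m_i|$ against the rapid decay of $c^k_{\vec m}$.

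The caveat is that the decisive step --- the one that makes this a theorem worth citing rather than a routine variant of Coifman--Meyer --- is compressed into one sentence, namely the reorganization of $\sum_k\prod_iP^k_if_i$ by the \emph{actual} (Euclidean) top frequency. In the degenerate regime $\lambda_{i_0}\ll\lambda_{i_1}$, the factor that is low for $d_\lambda$ occupies roughly $\log_2(\lambda_{i_1}/\lambda_{i_0})$ Euclidean dyadic scales above the $d_\lambda$-top factor, and a naive summation over this tie-breaking parameter (treating each offset as a separate $O(1)$-bounded paraproduct) loses exactly that logarithm and destroys the claimed uniformity. To avoid it one must reindex each piece by the scale of the Euclidean-dominant factor, so that the output is genuinely annulus-localized there, note that the resulting band projections of the remaining factors are differences of two ball multipliers and hence dominated pointwise by maximal functions, and only then apply the reverse square-function inequality; this bookkeeping is where the uniformity in $\lambda$ is actually won and should be written out. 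Finally, the statement as given restricts to $1\le p<\infty$, so the $H^p$, $p<1$, machinery you invoke for the high--high case is not needed; at the endpoint $p=1$ one still replaces duality by $\|\sum_j\tilde\Delta_jF_j\|_{L^1}\lesssim\|(\sum_j|F_j|^2)^{1/2}\|_{L^1}$ via $H^1$, which is consistent with your plan.
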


\begin{proof}[Proof of Theorem \ref{pseudoprod}]
  Noticing that
  $$
   \Pi^n_\chi(f_1,\ldots ,f_n) = \Pi^n_{\tilde{\chi}}(f_1,\ldots ,f_n)
  $$
  with $\tilde{\chi}(\xi_1,\cdots,\xi_n) = \chi(\xi_1,\cdots,\xi_n)\prod_{i=1}^n \phi_{\sim N_i}(\xi_i)$, it suffices to show that $\tilde{\chi}$ satisfies \eqref{unifEst.1} for suitable $\lambda_1,\cdots,\lambda_n>0$. But setting $\lambda=(\frac{N_n}{N_1},\ldots ,\frac{N_n}{N_{n-1}}, 1)$, this is easily checked since on the one hand
  $$
  |\partial^\beta \tilde{\chi}(\xi)| \lesssim \sum_{\gamma\le \beta} |\partial^{\beta-\gamma}\chi(\xi)| \prod_{i=1}^n N_i^{-\gamma_i} \tilde{\phi}^{(\gamma_i)}(\frac{\xi_i}{N_i}) \lesssim \prod_{i=1}^n N_i^{-\beta_i} 1_{|\xi_i|\sim N_i},
  $$
  and on the other hand,
  $$
  \frac{\prod_{i=1}^n |\lambda_i|^{\beta_i}} {d_\lambda(\xi,0)^{|\beta|}} \sim \frac{\prod_{i=1}^n \left(\frac{N_n}{N_i}\right)^{\beta_i}} {\left(N_n\sum_{i=1}^n \frac{|\xi_i|}{N_i}\right)^{|\beta|}} \sim \prod_{i=1}^n N_i^{-\beta_i},
  $$
  for $|\xi_i|\sim N_i$.
\end{proof}

\begin{remark}\label{symbolprod}
  It is worth noticing that if two symbols $\chi_1, \chi_2$ satisfy \eqref{pseudoprod.1}, then this condition also holds for the product function $\chi_1\chi_2$. This is easily obtained thanks to the Leibniz rule.
\end{remark}

\begin{lemma}\label{omega.inv}
Let $0<\alpha \le 1$. Let $N_1\ll N_2$ be two dyadic numbers. Then the symbol $\chi$ defined on $\R^2$ by
$$
\chi(\xi_1, \xi_2) = \frac{N_1N_2^\alpha}{\Omega_2(\xi_1, \xi_2)}
$$
where $\Omega_2$ is defined in \eqref{Omega2}, satisfies the Marcinkiewicz condition \eqref{pseudoprod.1} on the set $\{(\xi_1, \xi_2)\in\R^2 : |\xi_1|\sim N_1, |\xi_2|\sim N_2\}$.
\end{lemma}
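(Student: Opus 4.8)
The plan is to reduce the claim to two pointwise facts about the resonance function, which for \eqref{dpB} is
$$\Omega_2(\xi_1,\xi_2)=\omega_{\alpha+1}(\xi_1+\xi_2)-\omega_{\alpha+1}(\xi_1)-\omega_{\alpha+1}(\xi_2),$$
restricted to the dyadic region $R:=\{(\xi_1,\xi_2): |\xi_1|\sim N_1,\ |\xi_2|\sim N_2\}$, on which, since $N_1\ll N_2$, one also has $|\xi_1+\xi_2|\sim N_2$. Since $N_1N_2^{\alpha}$ is a constant, $\partial^\beta\chi=N_1N_2^{\alpha}\,\partial^\beta(\Omega_2^{-1})$, and the multivariate Faà di Bruno formula expresses $\partial^\beta(\Omega_2^{-1})$ as a finite linear combination of terms of the form $\Omega_2^{-(m+1)}\prod_{j=1}^m\partial^{\mu_j}\Omega_2$ with $m\ge 1$, the $\mu_j\in\N^2\setminus\{0\}$, and $\mu_1+\cdots+\mu_m=\beta$. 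Hence the whole lemma follows once I establish, on $R$: (i) the lower bound $|\Omega_2(\xi_1,\xi_2)|\gtrsim N_1N_2^{\alpha}$, and (ii) the derivative bounds $|\partial^\gamma\Omega_2(\xi_1,\xi_2)|\lesssim N_1N_2^{\alpha}\,N_1^{-\gamma_1}N_2^{-\gamma_2}$ for every $\gamma=(\gamma_1,\gamma_2)\in\N^2\setminus\{0\}$. Indeed, plugging (i) and (ii) into a typical Faà di Bruno term gives $(N_1N_2^{\alpha})^{-(m+1)}\prod_{j=1}^m\big(N_1N_2^{\alpha}N_1^{-(\mu_j)_1}N_2^{-(\mu_j)_2}\big)=(N_1N_2^{\alpha})^{-1}N_1^{-\beta_1}N_2^{-\beta_2}$, so that $|\partial^\beta\chi|\lesssim N_1^{-\beta_1}N_2^{-\beta_2}\sim|\xi_1|^{-\beta_1}|\xi_2|^{-\beta_2}$ on $R$, with $\beta=0$ being exactly (i).

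For (ii) I would distinguish three cases according to the shape of $\gamma$. If $\gamma_2=0$ and $\gamma_1\ge 1$, then $\partial_{\xi_1}^{\gamma_1}\Omega_2=\omega_{\alpha+1}^{(\gamma_1)}(\xi_1+\xi_2)-\omega_{\alpha+1}^{(\gamma_1)}(\xi_1)$, and \eqref{hyp1.1}--\eqref{hyp1.2} bound this by $\lesssim N_2^{\alpha+1-\gamma_1}+N_1^{\alpha+1-\gamma_1}\lesssim N_1^{1-\gamma_1}N_2^{\alpha}$, using $N_1\le N_2$ and $1-\gamma_1\le 0$. If $\gamma_1\ge 1$ and $\gamma_2\ge 1$, the two single-variable terms $\omega_{\alpha+1}(\xi_1)$ and $\omega_{\alpha+1}(\xi_2)$ are annihilated by the mixed derivative, so $\partial_{\xi_1}^{\gamma_1}\partial_{\xi_2}^{\gamma_2}\Omega_2=\omega_{\alpha+1}^{(\gamma_1+\gamma_2)}(\xi_1+\xi_2)$, which is $\lesssim N_2^{\alpha+1-\gamma_1-\gamma_2}\le N_1^{1-\gamma_1}N_2^{\alpha-\gamma_2}$ for the same reason. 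The remaining, and crucial, case is $\gamma_1=0$, $\gamma_2\ge 1$: here $\partial_{\xi_2}^{\gamma_2}\Omega_2=\omega_{\alpha+1}^{(\gamma_2)}(\xi_1+\xi_2)-\omega_{\alpha+1}^{(\gamma_2)}(\xi_2)$ is a difference of two quantities each of size $\sim N_2^{\alpha+1-\gamma_2}$, so the triangle inequality only yields $\lesssim N_2^{\alpha+1-\gamma_2}$, which is too large by a factor $N_2/N_1$. The remedy is to keep the cancellation by a first order Taylor expansion, $\partial_{\xi_2}^{\gamma_2}\Omega_2=\xi_1\int_0^1\omega_{\alpha+1}^{(\gamma_2+1)}(\xi_2+t\xi_1)\,dt$; then \eqref{hyp1.1}--\eqref{hyp1.2} give $|\partial_{\xi_2}^{\gamma_2}\Omega_2|\lesssim N_1\,N_2^{\alpha-\gamma_2}$, as required. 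This mean value cancellation is the mathematical heart of the lemma.

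For (i) the same decomposition does the job: writing $\Omega_2=\big(\omega_{\alpha+1}(\xi_1+\xi_2)-\omega_{\alpha+1}(\xi_2)\big)-\omega_{\alpha+1}(\xi_1)=\xi_1\int_0^1\omega_{\alpha+1}'(\xi_2+t\xi_1)\,dt-\omega_{\alpha+1}(\xi_1)$, and using that $\omega_{\alpha+1}'$ is even, continuous and, by \eqref{hyp1.1}, nonvanishing with $|\omega_{\alpha+1}'(\xi)|\sim|\xi|^{\alpha}$ on $\{|\xi|\ge\xi_0\}$, it keeps a constant sign there, so the integral is of size $\sim N_1N_2^{\alpha}$ and does not oscillate, while $|\omega_{\alpha+1}(\xi_1)|\lesssim N_1^{\alpha+1}\ll N_1N_2^{\alpha}$ is a negligible perturbation; hence $|\Omega_2|\gtrsim N_1N_2^{\alpha}$. (The finitely many dyadic blocks with the large frequency $\lesssim\xi_0$ play no role in the applications and, if needed, are handled directly from the fact that $\Omega_2$ is smooth and does not vanish on $R$ away from $\xi_1=0$.) The only genuine obstacle is thus case $\gamma_1=0$ of (ii) together with its counterpart in (i): one must not estimate $\omega_{\alpha+1}^{(\gamma_2)}(\xi_1+\xi_2)-\omega_{\alpha+1}^{(\gamma_2)}(\xi_2)$ by brute force, but exploit the near coincidence of the two arguments through a Taylor expansion, which is precisely what produces the gain of one factor $N_1$ at the cost of one power of $N_2$; the rest is bookkeeping with Hypothesis \ref{hyp1}, the chain rule and Remark on products of symbols satisfying \eqref{pseudoprod.1}.
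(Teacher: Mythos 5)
Your argument is correct and is essentially the paper's proof: both estimate the derivatives of $\Omega_2$ via the mean value theorem --- with the Taylor-type cancellation in the case where only $\xi_2$-derivatives fall on $\Omega_2$ as the key point --- and then expand $\partial^\beta(\Omega_2^{-1})$ into terms $\Omega_2^{-(m+1)}\prod_j\partial^{\mu_j}\Omega_2$ controlled by the lower bound $|\Omega_2|\sim N_1N_2^{\alpha}$, which the paper simply quotes from Lemma \ref{res2} (estimate \eqref{resonance}) rather than re-deriving as you do. Your only streamlining is to record the uniform bound $|\partial^{\gamma}\Omega_2|\lesssim N_1N_2^{\alpha}\,N_1^{-\gamma_1}N_2^{-\gamma_2}$, which makes the composition step a one-line computation, whereas the paper keeps the sharper case-by-case bounds \eqref{omega.inv.1}--\eqref{omega.inv.3} and then checks the resulting exponents combinatorially ($A_\gamma\ge 0$, $A_\gamma=-B_\gamma$).
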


\begin{proof}
 Let $(\xi_1,\xi_2)\in\R^2$ be such that $|\xi_1|\sim N_1$ and $|\xi_2|\sim N_2$. First we estimate $\partial^{\beta}\Omega_2(\xi_1,\xi_2)$ for $\beta=(\beta_1,\beta_2)\in \mathbb{N}^2$.
 From Lemma \ref{res2} and the mean value theorem we easily get that
 \begin{align}
  |\partial^\beta \Omega_2(\xi_1,\xi_2)| \lesssim N_1N_2^{\alpha-\beta_2}\ &\textrm{ if } \beta_1=0, \beta_2\ge 0, \label{omega.inv.1}\\
  |\partial^\beta \Omega_2(\xi_1,\xi_2)| \lesssim N_2^{\alpha+1-|\beta|}\  &\textrm{ if } \beta_1\ge 1, \beta_2\ge 1 \textrm{ or } \beta=(1,0) ,\label{omega.inv.2} \\
  |\partial^\beta \Omega_2(\xi_1,\xi_2)| \lesssim N_1^{\alpha+1-|\beta|}\  &\textrm{ if } \beta_1\ge 2, \beta_2=0. \label{omega.inv.3}
 \end{align}
 Now classical derivative rules lead to
 $$
 \left|\partial^\beta\left( \frac 1{\Omega_2(\xi_1,\xi_2)}\right) \right| \lesssim \sum_{\gamma\in C_\beta} \frac 1{|\Omega_2(\xi_1,\xi_2)|^{|\beta|+1}} \prod_{\substack{0\le i\le \beta_1 \\ 0\le j\le \beta_2}} |\partial^{(i,j)} \Omega_2(\xi_1,\xi_2)|^{\gamma_{i,j}} ,
 $$
 where
 $$
 C_\beta = \left\{\gamma=(\gamma_{i,j})_{\substack{0\le i\le \beta_1 \\ 0\le j\le \beta_2}} : \sum_{\substack{0\le i\le \beta_1\\ 0\le j\le \beta_2}} \gamma_{i,j} = |\beta|, \sum_{\substack{0\le i\le \beta_1\\ 0\le j\le \beta_2}} i\gamma_{i,j} = \beta_1, \sum_{\substack{0\le i\le \beta_1\\ 0\le j\le \beta_2}} j\gamma_{i,j} = \beta_2 \right\}.
 $$
Therefore, we deduce from \eqref{resonance} as well as \eqref{omega.inv.1}-\eqref{omega.inv.2}-\eqref{omega.inv.3} that
\begin{align*}
 &|\xi_1|^{\beta_1} |\xi_2|^{\beta_2} |\partial^\beta \chi(\xi_1, \xi_2)| \\
 & \quad \lesssim \max_{\gamma\in C_\beta} \frac{N_1^{1+\beta_1} N_2^{\alpha+\beta_2}}{(N_1N_2^\alpha)^{|\beta|+1}} N_2^{\alpha\gamma_{1,0}} \prod_{j=0}^{\beta_2} (N_1N_2^{\alpha-j})^{\gamma_{0,j}} \cdot \prod_{i=2}^{\beta_1} N_1^{(\alpha+1-i)\gamma_{i,0}} \cdot \prod_{\substack{1\le i\le \beta_1\\ 1\le j\le \beta_2}} N_2^{(\alpha+1-i-j)\gamma_{i,j}}\\
 &\quad \lesssim \max_{\gamma\in C_\beta} N_1^{A_\gamma} N_2^{B_\gamma},
\end{align*}
with
$$
A_\gamma = \sum_{j=0}^{\beta_2} \gamma_{0,j} + \sum_{i=2}^{\beta_1} (\alpha+1-i)\gamma_{i,0} - \beta_2
$$
and
$$
B_\gamma = \alpha\gamma_{1,0} + \sum_{j=0}^{\beta_2}(\alpha-j)\gamma_{0,j} + \sum_{\substack{1\le i\le \beta_1\\ 1\le j\le \beta_2}} (\alpha+1-i-j)\gamma_{i,j} + \beta_2-\alpha |\beta|.
$$
Noticing that for $\gamma\in C_\beta$ we have
$$
 \beta_2 = |\beta|-\beta_1
= \sum_{\substack{0\le i\le \beta_1\\ 0\le j\le \beta_2}} \gamma_{i,j} - \sum_{\substack{0\le i\le \beta_1\\ 0\le j\le \beta_2}} i\gamma_{i,j}
 = \sum_{j=0}^{\beta_2} \gamma_{0,j} - \sum_{\substack{1\le i\le \beta_1\\ 0\le j\le \beta_2}} (i-1)\gamma_{i,j},
$$
we infer
\begin{align*}
 A_\gamma &= \alpha \sum_{i=2}^{\beta_1} \gamma_{i,0} - \sum_{i=2}^{\beta_1} (i-1)\gamma_{i,0} + \sum_{\substack{1\le i\le \beta_1\\ 0\le j\le \beta_2}} (i-1)\gamma_{i,j}\\
 &= \alpha \sum_{i=2}^{\beta_1} \gamma_{i,0} + \sum_{\substack{1\le i\le \beta_1\\ 1\le j\le \beta_2}} (i-1)\gamma_{i,j}.
\end{align*}
Similarly, we get
\begin{align*}
 B_\gamma &= \alpha\left( -|\beta| + \gamma_{0,0}+\gamma_{1,0} + \sum_{\substack{0\le i\le \beta_1\\ 1\le j\le \beta_2}} \gamma_{i,j}\right) + \left(\beta_2 - \sum_{\substack{0\le i\le \beta_1\\ 1\le j\le \beta_2}} j\gamma_{i,j} - \sum_{\substack{1\le i\le \beta_1\\ 1\le j\le \beta_2}} (i-1)\gamma_{i,j} \right)\\
 &= -\alpha \sum_{i=2}^{\beta_1} \gamma_{i,0} - \sum_{\substack{1\le i\le \beta_1\\ 1\le j\le \beta_2}} (i-1)\gamma_{i,j}.
\end{align*}
We conclude that $A_\gamma\ge 0$ and $A_\gamma=-B_\gamma$, which provides
$$
|\xi_1|^{\beta_1} |\xi_2|^{\beta_2} |\partial^\beta \chi(\xi_1, \xi_2)| \lesssim \max_{\gamma\in C_\beta} N_2^{A_\gamma+B_\gamma} \lesssim 1.
$$
\end{proof}

\subsection{Basic estimates on the sum space $F^{0,\frac12}=X^{-1,1}+X^{0,(\frac12)_+}$}
By definition of sum space in \eqref{F}, we always have by taking the trivial decompositions $(u_1,u_2)=(u,0)$ or $(u_1,u_2)=(0,u)$ that
\begin{equation} \label{Fprop}
\|u\|_{F^{0,\frac12}} \le \min\{ \|u\|_{X^{-1,1}} \, , \, \|u\|_{X^{0,(\frac12)_+}} \} \, .
\end{equation}
The next lemma tells us when the reverse holds true.

\begin{lemma}\label{eqFsb}
Let $u\in F^{0,\frac12}$ and $L, N$ be two dyadic numbers.

If $1\le L\lesssim N^2$, then
\begin{equation}\label{eqFsb.0}
\|Q_{\gtrsim L} u_N\|_{L^2_{x,t}} \lesssim NL^{-1} \|Q_{\gtrsim L} u_N\|_{F^{0,\frac12}} \, .
\end{equation}

If $L\gtrsim \cro{N}^2$, then
\begin{equation}
\|Q_{\gtrsim L} u_N\|_{L^2_{x,t}} \lesssim L^{-\frac12}\|Q_{\gtrsim L}u_N\|_{F^{0,\frac12}} \, . \end{equation}
\end{lemma}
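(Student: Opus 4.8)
The plan is to work directly from the definition of the $F^{0,\frac12}$-norm, using crucially that, writing $v:=Q_{\gtrsim L}u_N$, the transform $\F v$ is supported in the fixed region
\[
S_{N,L}:=\big\{(\xi,\tau)\in\R^2:\ |\xi|\sim N,\ |\tau-\omega_{\alpha+1}(\xi)|\gtrsim L\big\}.
\]
First I would fix $\varepsilon>0$ and choose, by definition of $\|\cdot\|_{F^{0,\frac12}}$, a decomposition $v=v_1+v_2$ with $v_1\in X^{-1,1}$, $v_2\in X^{0,(\frac12)_+}$ and $\|v_1\|_{X^{-1,1}}+\|v_2\|_{X^{0,(\frac12)_+}}\le \|v\|_{F^{0,\frac12}}+\varepsilon$. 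By Plancherel and duality, $\|v\|_{L^2_{x,t}}=\sup|\langle\F v,g\rangle|$, the supremum being taken over $g\in L^2(\R^2)$ with $\|g\|_{L^2}=1$ and $\supp g\subset S_{N,L}$ (the extremal $g$ being proportional to $\F v$). Hence $\|v\|_{L^2_{x,t}}=\sup|\langle\F v_1,g\rangle+\langle\F v_2,g\rangle|$.

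Next I would estimate the two pairings separately. Since $\supp g\subset S_{N,L}$, I can insert weights and apply Cauchy--Schwarz,
\[
|\langle\F v_1,g\rangle|=\Big|\big\langle\cro{\xi}^{-1}\cro{\tau-\omega_{\alpha+1}(\xi)}\F v_1,\ \cro{\xi}\cro{\tau-\omega_{\alpha+1}(\xi)}^{-1}g\big\rangle\Big|\le\|v_1\|_{X^{-1,1}}\,\big\|1_{S_{N,L}}\cro{\xi}\cro{\tau-\omega_{\alpha+1}(\xi)}^{-1}\big\|_{L^\infty},
\]
and on $S_{N,L}$ one has $\cro{\xi}\lesssim\cro{N}$ and $\cro{\tau-\omega_{\alpha+1}(\xi)}\gtrsim L$, so that $|\langle\F v_1,g\rangle|\lesssim\cro{N}L^{-1}\|v_1\|_{X^{-1,1}}$. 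In the same way,
\[
|\langle\F v_2,g\rangle|\le\|v_2\|_{X^{0,(\frac12)_+}}\,\big\|1_{S_{N,L}}\cro{\tau-\omega_{\alpha+1}(\xi)}^{-(\frac12)_+}\big\|_{L^\infty}\lesssim L^{-\frac12}\|v_2\|_{X^{0,(\frac12)_+}},
\]
where we used $L\ge1$, hence $L^{-(\frac12)_+}\le L^{-\frac12}$. Summing gives $\|v\|_{L^2_{x,t}}\lesssim\cro{N}L^{-1}\|v_1\|_{X^{-1,1}}+L^{-\frac12}\|v_2\|_{X^{0,(\frac12)_+}}$.

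It then remains to reconcile the two terms using the assumed range of $L$. If $1\le L\lesssim N^2$, then $N\gtrsim1$, so $\cro{N}\sim N$, and $L^{-\frac12}=L^{\frac12}L^{-1}\lesssim NL^{-1}$; therefore the right-hand side is $\lesssim NL^{-1}\big(\|v_1\|_{X^{-1,1}}+\|v_2\|_{X^{0,(\frac12)_+}}\big)\le NL^{-1}(\|v\|_{F^{0,\frac12}}+\varepsilon)$, and letting $\varepsilon\to0$ yields \eqref{eqFsb.0}. If instead $L\gtrsim\cro{N}^2$, then $\cro{N}\lesssim L^{\frac12}$, hence $\cro{N}L^{-1}\lesssim L^{-\frac12}$, and the right-hand side is $\lesssim L^{-\frac12}(\|v\|_{F^{0,\frac12}}+\varepsilon)$; letting $\varepsilon\to0$ gives the second estimate.

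The only point needing a little care is the termwise splitting of $\langle\F v,g\rangle$: the functions $v_1$ and $v_2$ carry, a priori, no frequency or modulation localization, but since $\supp g\subset S_{N,L}$ we have $\langle\F v_i,g\rangle=\langle 1_{S_{N,L}}\F v_i,g\rangle$, and the indicator $1_{S_{N,L}}$ is harmlessly absorbed into the $L^\infty$-norm of the multiplier weight used in the Cauchy--Schwarz step. Everything else is the elementary comparison of powers of $L$ and $\cro{N}$ above, and the two stated ranges of $L$ are precisely what make those comparisons work; I do not expect any genuine obstacle here.
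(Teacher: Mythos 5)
Your proof is correct and follows essentially the paper's route: the paper reduces the lemma to the single estimate \eqref{eqFsb.2}, $\|Q_{\gtrsim L}u_N\|_{L^2_{x,t}}\lesssim (L^{-(\frac12)_+}\vee L^{-1}N)\|Q_{\gtrsim L}u_N\|_{F^{0,\frac12}}$, and your argument (near-optimal decomposition in the sum space, then the weight comparison $\cro{\xi}\lesssim\cro{N}$, $\cro{\tau-\omega_{\alpha+1}(\xi)}\gtrsim L$ on the Fourier support of $Q_{\gtrsim L}u_N$, via duality/Cauchy--Schwarz) is precisely the standard proof of that estimate. Your subsequent comparison of $NL^{-1}$ and $L^{-\frac12}$ in the two ranges of $L$ is exactly how the lemma is deduced from \eqref{eqFsb.2}, so there is nothing to object to.
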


\begin{proof}
  It directly follows from the estimate
  \begin{equation}\label{eqFsb.2}
    \|Q_{\gtrsim L} u_N\|_{L^2_{x,t}} \lesssim (L^{-[(\frac{1}{2})_+]}\vee L^{-1}N) \|Q_{\gtrsim L} u_N\|_{F^{0,\frac12}}.
  \end{equation}
\end{proof}

\section{$L^2$-multilinear estimates}

\subsection{$L^2$-bilinear estimates}
We follow the strategy in \cite{MV} to show $L^2$-bilinear estimates related to the dispersive symbol.

Let us define the resonance function of order 2 associated with \eqref{fKdV} by
\begin{equation} \label{Omega2}
\Omega_2(\xi_1, \xi_2) = \omega_{\alpha+1}(\xi_1+\xi_2) - \omega_{\alpha+1}(\xi_1)-\omega_{\alpha+1}(\xi_2)
\end{equation}
where $\omega_{\alpha+1}$ is the dispersive symbol defined in Hypothesis \ref{hyp1}. For $\xi_1, \xi_2, \xi_3\in\R$, it will be convenient to define the quantities $|\xi_{max}|\ge |\xi_{med}|\ge |\xi_{min}|$ to be the maximum, median and minimum of $|\xi_1|, |\xi_2|$ and $|\xi_3|$ respectively.

For the sake of completeness, we recall a few results proved in \cite{MV}.

\begin{lemma}[\cite{MV}, Lemma 2.1]\label{res2}
 Let $\alpha>0$. Let $\xi_1,\xi_2\in\R$, and $\xi_3=-(\xi_1+\xi_2)$. Then
  \begin{equation}\label{resonance}
    |\Omega_2(\xi_1,\xi_2)| \sim |\xi_{\min}| |\xi_{\max}|^\alpha.
  \end{equation}
\end{lemma}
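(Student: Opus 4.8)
The plan is to prove Lemma \ref{res2}, which asserts $|\Omega_2(\xi_1,\xi_2)| \sim |\xi_{\min}||\xi_{\max}|^\alpha$ where $\xi_3 = -(\xi_1+\xi_2)$, and the three quantities $|\xi_1|,|\xi_2|,|\xi_3|$ are reordered as $|\xi_{\max}|\ge|\xi_{\mathrm{med}}|\ge|\xi_{\min}|$. The first observation is a symmetry reduction: since $\omega_{\alpha+1}$ is odd, we have $\Omega_2(\xi_1,\xi_2) = \omega_{\alpha+1}(\xi_1+\xi_2)-\omega_{\alpha+1}(\xi_1)-\omega_{\alpha+1}(\xi_2) = -\big(\omega_{\alpha+1}(\xi_1)+\omega_{\alpha+1}(\xi_2)+\omega_{\alpha+1}(\xi_3)\big)$, which is a fully symmetric function of $(\xi_1,\xi_2,\xi_3)$ subject to $\xi_1+\xi_2+\xi_3=0$. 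Hence we may freely relabel the three frequencies and assume, say, $|\xi_1|\ge|\xi_2|\ge|\xi_3|$, so $|\xi_{\max}|=|\xi_1|$, $|\xi_{\min}|=|\xi_3|$. Note also that $\xi_1+\xi_2+\xi_3=0$ forces $|\xi_{\max}|\sim|\xi_{\mathrm{med}}|$ (since $|\xi_1| = |\xi_2+\xi_3|\le 2|\xi_2|$), so really $|\xi_1|\sim|\xi_2|\gg$ or $\sim |\xi_3|$.

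Next I would split into two regimes according to whether all frequencies are comparable or one is much smaller, and within each, whether frequencies are large (above $\xi_0$ from Hypothesis \ref{hyp1}) or bounded. For the \emph{high-frequency, well-separated} case $|\xi_3|\ll|\xi_1|$ with $|\xi_1|\ge\xi_0$, I would write $\Omega_2 = \omega_{\alpha+1}(\xi_1+\xi_2)-\omega_{\alpha+1}(\xi_1) - \omega_{\alpha+1}(\xi_2)$. Since $\xi_1+\xi_2 = -\xi_3$ has small modulus, the natural move is to group differently: use $\Omega_2(\xi_1,\xi_2) = -\omega_{\alpha+1}(\xi_1)-\omega_{\alpha+1}(\xi_2)-\omega_{\alpha+1}(\xi_3)$ and Taylor expand $\omega_{\alpha+1}(\xi_2) = \omega_{\alpha+1}(-\xi_1-\xi_3)$ around $-\xi_1$; by oddness $\omega_{\alpha+1}(-\xi_1) = -\omega_{\alpha+1}(\xi_1)$, so $-\omega_{\alpha+1}(\xi_1)-\omega_{\alpha+1}(\xi_2) = \omega_{\alpha+1}(-\xi_1-\xi_3)-\omega_{\alpha+1}(-\xi_1) \cdot(-1)$... more cleanly: $\Omega_2 = \big(\omega_{\alpha+1}(\xi_1+\xi_2)-\omega_{\alpha+1}(\xi_1)\big) - \omega_{\alpha+1}(\xi_2)$, and applying the mean value theorem to the bracket gives $\xi_2\,\omega_{\alpha+1}'(\zeta)$ for some $\zeta$ between $\xi_1$ and $\xi_1+\xi_2$, both of size $\sim|\xi_1|$; combining with $\omega_{\alpha+1}(\xi_2)$ and using \eqref{hyp1.1} ($|\omega_{\alpha+1}'(\zeta)|\sim|\zeta|^\alpha\sim|\xi_1|^\alpha$) I need the leading term $\xi_2\omega_{\alpha+1}'(\zeta) - \omega_{\alpha+1}(\xi_2)$ to be of size $|\xi_3||\xi_1|^\alpha$. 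The sharpest way is a second-order expansion: write $\Omega_2 = -[\omega_{\alpha+1}(\xi_1)+\omega_{\alpha+1}(\xi_3)-\omega_{\alpha+1}(\xi_1+\xi_3)] $ (using $\xi_2=-\xi_1-\xi_3$ and oddness), then Taylor at $\xi_1$: $\omega_{\alpha+1}(\xi_1+\xi_3) = \omega_{\alpha+1}(\xi_1)+\xi_3\omega_{\alpha+1}'(\xi_1)+\tfrac12\xi_3^2\omega_{\alpha+1}''(\zeta)$, so $\Omega_2 = \xi_3\omega_{\alpha+1}'(\xi_1)+\tfrac12\xi_3^2\omega_{\alpha+1}''(\zeta) - \omega_{\alpha+1}(\xi_3)$. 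Hmm, that still has the bare $\omega_{\alpha+1}(\xi_3)$ term of size $|\xi_3|^{\alpha+1}$, which when $|\xi_3|$ is itself large compared to $\xi_0$ is $\lesssim|\xi_3||\xi_1|^\alpha$ but could compete with the main term in sign. The cleaner route, which I expect works, is the symmetric mean-value identity: for $\xi_1+\xi_2+\xi_3=0$,
$$
\Omega_2(\xi_1,\xi_2) = -\tfrac12\int_0^1\int_0^1 \big[\xi_1^2\,\omega_{\alpha+1}''(\cdot)+\cdots\big],
$$
but more practically I would just bound $\Omega_2$ from above by $|\xi_3|\cdot\sup|\omega_{\alpha+1}'|$ on the relevant range (giving $\lesssim|\xi_{\min}||\xi_{\max}|^\alpha$ via \eqref{hyp1.1}) and from below by isolating the dominant first-order term and absorbing lower-order contributions using that $|\xi_3|\ll|\xi_1|$ and the asymptotics.

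Then I would treat the \emph{comparable-frequency} case $|\xi_1|\sim|\xi_2|\sim|\xi_3|$ (all $\sim N$): here $|\xi_{\min}||\xi_{\max}|^\alpha\sim N^{\alpha+1}$, and we must show $|\omega_{\alpha+1}(\xi_1)+\omega_{\alpha+1}(\xi_2)+\omega_{\alpha+1}(\xi_3)|\sim N^{\alpha+1}$. The upper bound is immediate from \eqref{hyp1.1}/\eqref{hyp1.2}. For the lower bound when $N\gtrsim\xi_0$, since two of the three frequencies must have the same sign (they sum to zero), WLOG $\xi_1,\xi_2>0>\xi_3$ with $\xi_1,\xi_2\sim N$ and $\xi_3=-(\xi_1+\xi_2)$; then strict convexity/concavity of $\omega_{\alpha+1}$ on $(\xi_0,\infty)$—guaranteed by $\omega_{\alpha+1}''$ having constant sign there from \eqref{hyp1.1}, namely $|\omega_{\alpha+1}''|\sim|\xi|^{\alpha-1}>0$—gives a quantitative strict inequality $\omega_{\alpha+1}(\xi_1)+\omega_{\alpha+1}(\xi_2)\lessgtr \omega_{\alpha+1}(\xi_1+\xi_2)$ with a gap of order $N^{\alpha+1}$; indeed $\omega_{\alpha+1}(\xi_1+\xi_2)-\omega_{\alpha+1}(\xi_1)-\omega_{\alpha+1}(\xi_2) = -\omega_{\alpha+1}(0) + \int\!\!\int \xi_1\xi_2\,\omega_{\alpha+1}''$, controlled below by $|\xi_1||\xi_2|\inf|\omega_{\alpha+1}''|\sim N^2\cdot N^{\alpha-1}=N^{\alpha+1}$. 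Finally, for the \emph{bounded-frequency regime} $|\xi_{\max}|\lesssim\xi_0$, both sides are handled by the $C^1$ regularity of $\omega_{\alpha+1}$ and the fact that $\omega_{\alpha+1}$ is odd (so $\omega_{\alpha+1}(0)=0$) together with—critically—the behavior near the origin which, in the pure-power case $\omega_{\alpha+1}(\xi)=\xi|\xi|^\alpha$, still gives $\Omega_2\sim|\xi_{\min}||\xi_{\max}|^\alpha$; in the general case this is exactly where Hypothesis \ref{hyp1} needs to be supplemented, but the statement as quoted from \cite{MV} handles low frequencies and I would invoke that the low-frequency analysis is identical to the one there. \textbf{Main obstacle.} The genuinely delicate point is the lower bound $|\Omega_2|\gtrsim|\xi_{\min}||\xi_{\max}|^\alpha$ in the \emph{intermediate} regime where $|\xi_3|$ is neither $\ll|\xi_1|$ nor $\sim|\xi_1|$ but comparable to $\xi_0$ (the crossover between the two asymptotic regions)—there one cannot use pure asymptotics on all three frequencies simultaneously, and one must carefully exploit that the two large frequencies $\xi_1,\xi_2$ satisfy \eqref{hyp1.1} while only bounding $\omega_{\alpha+1}(\xi_3)$ crudely, checking that it cannot cancel the main term; a continuity/compactness argument on the compact set $\{|\xi_3|\sim\xi_0\}$ combined with the non-vanishing of $\Omega_2$ away from the diagonal $\xi_i=0$ closes this gap.
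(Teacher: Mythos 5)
First, note that the paper itself offers no proof of this statement: Lemma \ref{res2} is quoted from \cite{MV} (Lemma 2.1) and used as a black box, so your argument has to stand on its own, and as written it does not. Your skeleton is the natural one (oddness to symmetrize, reduction to $|\xi_1|\sim|\xi_2|\ge|\xi_3|$, and the expansion $\Omega_2=\xi_3\,\omega_{\alpha+1}'(\xi_1)+\tfrac12\xi_3^2\,\omega_{\alpha+1}''(\zeta)-\omega_{\alpha+1}(\xi_3)$), and it does give the upper bound as well as the lower bound when $|\xi_3|\ll|\xi_1|$ with \emph{all} frequencies above $\xi_0$: there $|\omega_{\alpha+1}(\xi_3)|\sim|\xi_3|^{1+\alpha}\ll|\xi_3||\xi_1|^\alpha$, so your worry about sign competition is moot in that regime. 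The genuine gap is exactly the regime you defer at the end: $|\xi_3|$ below $\xi_0$ with $|\xi_1|$ of moderate size. There $\omega_{\alpha+1}(\xi_3)=\xi_3\,\omega_{\alpha+1}'(0)+o(|\xi_3|)$ is of the \emph{same} order as the main term $\xi_3\,\omega_{\alpha+1}'(\xi_1)$, and Hypothesis \ref{hyp1} (which prescribes only $|\omega_{\alpha+1}'|\sim|\xi|^\alpha$ above $\xi_0$ and mere $C^1$ regularity below) does not prevent $\omega_{\alpha+1}'(\xi_1)=\omega_{\alpha+1}'(0)$ at some $\xi_1\ge\xi_0$; for instance, for the Whitham symbol with small $\tau$ one has $\omega'(\xi_1^*)=\omega'(0)=1$ at some $\xi_1^*>\xi_0$, and then $\Omega_2=O(\xi_3^2)$ while $|\xi_{\min}||\xi_{\max}|^\alpha\sim|\xi_3|$. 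Your proposed patch, a continuity/compactness argument on $\{|\xi_3|\sim\xi_0\}$ plus non-vanishing of $\Omega_2$, cannot close this: the degeneration occurs as $\xi_3\to0$, where both sides of \eqref{resonance} vanish, so non-vanishing on a compact set says nothing about the ratio; what is actually needed is a lower bound of the form $|\omega_{\alpha+1}'(\xi_1)-\omega_{\alpha+1}'(0)|\gtrsim|\xi_1|^\alpha$, which is an additional structural input (true for the pure power $\xi|\xi|^\alpha$, where $\omega'$ is monotone and $\omega'(0)=0$, but not a consequence of Hypothesis \ref{hyp1}). So the decisive lower bound is not established by what you wrote, and cannot be by "absorbing lower-order contributions" alone.

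The other two regimes also have holes. In the comparable-frequency case you bound the double integral $\xi_1\xi_2\int_0^1\int_0^1\omega_{\alpha+1}''(s\xi_1+t\xi_2)\,ds\,dt$ from below by $|\xi_1||\xi_2|\inf|\omega_{\alpha+1}''|$, but the argument $s\xi_1+t\xi_2$ sweeps all the way down to $0$, where \eqref{hyp1.1} gives neither a sign nor a size for $\omega_{\alpha+1}''$ (the symbol is only $C^1$ at the origin, so $\omega_{\alpha+1}''$ may vanish, change sign, or blow up there); the constant sign of $\omega_{\alpha+1}''$ is guaranteed only on $[\xi_0,\infty)$. One must split off the region $\{s\xi_1+t\xi_2\lesssim\xi_0\}$ and show it is negligible, which requires a further argument you do not supply. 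Finally, for $|\xi_{\max}|\lesssim\xi_0$ you simply invoke \cite{MV}; as you yourself half-acknowledge, in that regime the two-sided bound uses the power-like behaviour of the symbol near the origin and does not follow from Hypothesis \ref{hyp1} alone, so this is a punt rather than a proof. In summary: the easy half (the upper bound, and the lower bound when all three frequencies exceed $\xi_0$ and are well separated) is in place, but the lower bound in the low- and moderate-frequency regimes — the actual content of the lemma, and the part the paper relies on \cite{MV} for — is missing.
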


\begin{lemma}[\cite{MV}, Lemma 2.3] \label{QLbound}
Let $L\ge 1$, $1\le p\le\infty$ and $s\in\R$. The operator $Q_{\le L}$ is bounded in $L^p_tH^s_x$ uniformly in $L\ge 1$.
\end{lemma}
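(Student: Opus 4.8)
The plan is to exploit the fact that, for each fixed spatial frequency $\xi$, the operator $Q_{\le L}$ acts in the time variable as convolution against a kernel whose $L^1_t$-norm is controlled uniformly in $L$ and $\xi$.

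First I would make the symbol of $Q_{\le L}$ explicit. A telescoping computation from the definitions of $\psi_K$ gives, for $L\ge 2$,
\[
\sum_{1\le K\le L}\psi_K(\xi,\tau) = \eta\bigl(L^{-1}(\tau-\omega_{\alpha+1}(\xi))\bigr) + b\bigl(\tau-\omega_{\alpha+1}(\xi)\bigr),
\]
where $b:=\eta(2\,\cdot)-\eta(\cdot)\in C_0^\infty(\R)$ is a fixed function independent of $L$ (the case $L=1$ is trivial, the symbol being $\eta(2(\tau-\omega_{\alpha+1}(\xi)))$). Since the two pieces are treated identically, I will focus on the main term $m_L(\xi,\tau):=\eta\bigl(L^{-1}(\tau-\omega_{\alpha+1}(\xi))\bigr)$.

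Next, taking the inverse Fourier transform in the $\tau$ variable only, and using that $\omega_{\alpha+1}$ is real-valued, one checks the representation
\[
Q_{\le L}u(\cdot,t) = \int_\R g_L(t-t')\, e^{i(t-t')\omega_{\alpha+1}(D_x)}u(\cdot,t')\,dt', \qquad g_L(t):=L\,\check\eta(Lt),
\]
where $\check\eta$ is the inverse Fourier transform of $\eta$ and $e^{is\omega_{\alpha+1}(D_x)}$ is the free propagator. Then I would conclude in two standard steps. Minkowski's integral inequality, together with the fact that $e^{is\omega_{\alpha+1}(D_x)}$ is an isometry of $H^s_x$ for every $s$ (since $|e^{is\omega_{\alpha+1}(\xi)}|=1$), gives pointwise in $t$
\[
\|Q_{\le L}u(\cdot,t)\|_{H^s_x} \le \int_\R |g_L(t-t')|\,\|u(\cdot,t')\|_{H^s_x}\,dt' = \bigl(|g_L| \ast_t \|u\|_{H^s_x}\bigr)(t),
\]
and then Young's convolution inequality in $t$ yields $\|Q_{\le L}u\|_{L^p_tH^s_x}\le \|g_L\|_{L^1_t}\,\|u\|_{L^p_tH^s_x}$ for all $1\le p\le\infty$. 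Finally, since $\eta\in C_0^\infty(\R)$, $\check\eta$ is Schwartz, so $\|g_L\|_{L^1_t}=\|\check\eta\|_{L^1}$ is finite and independent of $L$; the term coming from $b$ contributes $\|\check b\|_{L^1}$ in exactly the same way.

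The proof is essentially immediate once the convolution representation is in hand; the only mildly delicate points are the bookkeeping at the lowest dyadic scale (which produces the harmless fixed term $b$) and justifying the partial Fourier inversion, both of which are routine. I do not expect any genuine obstacle here, which is consistent with the result being quoted from \cite{MV}.
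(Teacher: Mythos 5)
Your argument is correct: the telescoping identity for $\sum_{1\le K\le L}\psi_K$, the representation of $Q_{\le L}$ as time-convolution with $g_L(t)=L\check\eta(Lt)$ composed with the unitary free group $U_\alpha(t-t')$ (which is an isometry of $H^s_x$ since $\omega_{\alpha+1}$ is real), and Minkowski plus Young with $\|g_L\|_{L^1_t}=\|\check\eta\|_{L^1}$ give exactly the claimed uniform bound. The paper itself gives no proof (it quotes the lemma from \cite{MV}), and your argument is essentially the standard one used there, so there is nothing further to add.
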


For any $T>0$, we consider $1_T$ the characteristic function of the interval $]0,T[$ and use the decomposition
\begin{equation}\label{ind-dec}
1_T = 1_{T,R}^{low}+1_{T,R}^{high},\quad \widehat{1_{T,R}^{low}}(\tau)=\eta(\tau/R)\widehat{1_T}(\tau)
\end{equation}
for some $R>0$.

\begin{lemma}[\cite{MV}, Lemma 2.4]\label{ihigh-lem} For any $ R>0 $ and $ T>0 $, it holds
\begin{equation}\label{high}
\|1_{T,R}^{high}\|_{L^1}\lesssim T\wedge R^{-1}.
\end{equation}
and
\begin{equation}\label{low}
\|1_{T,R}^{low}\|_{L^\infty}\lesssim  1.
\end{equation}
\end{lemma}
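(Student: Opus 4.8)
The final statement is Lemma~\ref{ihigh-lem}, which decomposes the characteristic function $1_T$ of the interval $]0,T[$ via a frequency cutoff at scale $R$ and asserts the two bounds $\|1_{T,R}^{high}\|_{L^1}\lesssim T\wedge R^{-1}$ and $\|1_{T,R}^{low}\|_{L^\infty}\lesssim 1$. Here I sketch how I would prove it.

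\textbf{Approach.} The plan is to work directly from the explicit Fourier-side definition $\widehat{1_{T,R}^{low}}(\tau)=\eta(\tau/R)\widehat{1_T}(\tau)$, so that $1_{T,R}^{low}=K_R\ast 1_T$ where $K_R=\mathcal F^{-1}_\tau(\eta(\cdot/R))$ is an $L^1$-normalized mollifier, i.e. $K_R(t)=R\,\check\eta(Rt)$ with $\check\eta\in\mathcal S(\R)$ and $\int K_R = \eta(0)=1$. Then the bound \eqref{low} is immediate from Young's inequality: $\|1_{T,R}^{low}\|_{L^\infty}\le \|K_R\|_{L^1}\|1_T\|_{L^\infty}=\|\check\eta\|_{L^1}\lesssim 1$, since $\|1_T\|_{L^\infty}=1$. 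For the high-frequency piece, I would first note the trivial bound: since $1_{T,R}^{high}=1_T-1_{T,R}^{low}=(1-K_R\ast)\,1_T$, and $1_T$ is supported in $]0,T[$ while the mollifier has rapidly decaying tails, one gets $\|1_{T,R}^{high}\|_{L^1}\le \|1_T\|_{L^1}+\|1_{T,R}^{low}\|_{L^1}\lesssim T$ directly from Young ($\|K_R\ast 1_T\|_{L^1}\le\|K_R\|_{L^1}\|1_T\|_{L^1}=\|\check\eta\|_{L^1}\,T$). This yields the $T$ half of $T\wedge R^{-1}$.

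\textbf{The $R^{-1}$ bound.} The more delicate half is $\|1_{T,R}^{high}\|_{L^1}\lesssim R^{-1}$, which should be uniform in $T$. I would exploit cancellation coming from $\int K_R = 1$. Write
\[
1_{T,R}^{high}(t) = 1_T(t) - \int_\R K_R(t-t')\,1_T(t')\,dt' = \int_\R K_R(t-t')\big(1_T(t)-1_T(t')\big)\,dt'.
\]
The integrand vanishes unless exactly one of $t,t'$ lies in $]0,T[$, i.e. unless $t'$ and $t$ lie on opposite sides of one of the two endpoints $0$ or $T$. Thus $1_{T,R}^{high}$ is concentrated in two neighborhoods of size $\sim R^{-1}$ around $t=0$ and $t=T$, and on each such neighborhood it is controlled by the tail mass of $K_R$. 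Quantitatively, splitting the $t'$-integral according to which endpoint separates $t'$ from $t$ and using $|K_R(t-t')|=R|\check\eta(R(t-t'))|$ together with the Schwartz decay $|\check\eta(y)|\lesssim \langle y\rangle^{-2}$, one bounds $\int_\R|1_{T,R}^{high}(t)|\,dt$ by $\iint R\langle R(t-t')\rangle^{-2}\,\mathbf 1[t,t'\text{ separated by }0\text{ or }T]\,dt'\,dt$; performing the $t$-integral near an endpoint $a\in\{0,T\}$ gives $\int |t-a|\,R\langle R(t-a)\rangle^{-2}\,dt \sim R^{-1}$ per endpoint, hence the total is $\lesssim R^{-1}$. (Alternatively, one can observe that $\widehat{1_{T,R}^{high}}(\tau)=(1-\eta(\tau/R))\widehat{1_T}(\tau)$ is supported in $|\tau|\gtrsim R$ with $|\widehat{1_T}(\tau)|\lesssim \min(T,|\tau|^{-1})$, and argue via a dyadic decomposition in $\tau$ and the $L^1$-BMO / smoothing properties of the pieces; but the physical-space computation above is cleaner.)

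\textbf{Main obstacle.} The only genuinely non-routine point is making the $R^{-1}$ estimate uniform in $T$: a naive use of Young's inequality only gives the $T$ bound, so one must use the mean-zero cancellation of the mollifier $K_R$ and the fact that $1_T$ is a characteristic function (so its "variation" is concentrated at the two endpoints, independently of how long the interval is). Once the commutator-type rewriting $1_{T,R}^{high}(t)=\int K_R(t-t')(1_T(t)-1_T(t'))\,dt'$ is in place, the remaining estimate is a standard one-dimensional integral bound using Schwartz decay of $\check\eta$, and combining with the trivial $T$ bound gives $\|1_{T,R}^{high}\|_{L^1}\lesssim T\wedge R^{-1}$ as claimed.
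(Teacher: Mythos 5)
The paper does not actually prove this lemma; it is quoted verbatim from [MV, Lemma 2.4], and your argument is the standard one behind it: write $1_{T,R}^{low}=K_R\ast 1_T$ with $K_R(t)=R\check\eta(Rt)$, get \eqref{low} and the $T$--half of \eqref{high} from Young's inequality, and get the $R^{-1}$--half from the mean-one cancellation $\int K_R=1$ via $1_{T,R}^{high}(t)=\int K_R(t-t')\bigl(1_T(t)-1_T(t')\bigr)\,dt'$, which localizes the mass near the two endpoints. This is the right structure and matches the proof in [MV].

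One step is wrong as literally written, though the fix is immediate. With only the decay $|\check\eta(y)|\lesssim\langle y\rangle^{-2}$ the endpoint computation is borderline divergent: for the endpoint $a$ the relevant double integral is
\begin{equation*}
\iint_{t>a>t'} R\,\langle R(t-t')\rangle^{-2}\,dt\,dt' \;=\; R^{-1}\int_0^\infty \frac{w\,dw}{1+w^2},
\end{equation*}
which diverges logarithmically, and likewise your displayed intermediate integral $\int |t-a|\,R\langle R(t-a)\rangle^{-2}\,dt$ is infinite, so the claimed ``$\sim R^{-1}$'' does not follow at that decay rate. Since $\check\eta$ is Schwartz you may simply use $|\check\eta(y)|\lesssim\langle y\rangle^{-3}$ (any exponent strictly greater than $2$ works): then for fixed $t$ on one side of $a$ the $t'$--integral over the other side is $\lesssim\langle R(t-a)\rangle^{-2}$, and $\int_\R\langle R(t-a)\rangle^{-2}\,dt\lesssim R^{-1}$, giving $\lesssim R^{-1}$ per endpoint and hence \eqref{high} after combining with the trivial bound $\lesssim T$. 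With that correction your proof is complete.
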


\begin{lemma}[\cite{MV}, Lemma 2.5]\label{ilow-lem}
Let $u\in L^2(\R^2)$. Then for any $T>0$, $R>0$  and $ L \gg R $, it holds
$$
\|Q_L (1_{T,R}^{low}u)\|_{L^2}\lesssim \|Q_{\sim L} u\|_{L^2}
$$
\end{lemma}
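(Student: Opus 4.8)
The plan is to use the fact that the temporal Fourier transform of $1_{T,R}^{low}$ is supported in a region of size $\lesssim R$, so that multiplication by $1_{T,R}^{low}$ in the physical variables perturbs the ``modulation'' $\tau-\omega_{\alpha+1}(\xi)$ by at most $O(R)$; since $L\gg R$, the output modulations $\sim L$ selected by $Q_L$ can therefore only come from input modulations $\sim L$. Concretely, by \eqref{ind-dec} and $\supp\eta\subset[-2,2]$ we have $\supp\widehat{1_{T,R}^{low}}\subset\{|\tau|\le 2R\}$, so on the Fourier side the operator $u\mapsto 1_{T,R}^{low}u$ is convolution in the $\tau$-variable against an $L^1$ kernel supported in $[-2R,2R]$ (and the identity in $\xi$).

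First I would decompose $u=\sum_{L'}Q_{L'}u$ and examine $Q_L\big(1_{T,R}^{low}Q_{L'}u\big)$. If $(\xi,\tau)$ belongs to the space-time Fourier support of $1_{T,R}^{low}Q_{L'}u$, then $\tau=\tau'+\tau''$ with $|\tau'-\omega_{\alpha+1}(\xi)|\sim L'$ and $|\tau''|\le 2R$, hence $|\tau-\omega_{\alpha+1}(\xi)|\in[L'/2-2R,\,2L'+2R]$. Intersecting this with the support $\{|\tau-\omega_{\alpha+1}(\xi)|\sim L\}$ of the symbol of $Q_L$ and using $L\gg R$ forces $L'\sim L$. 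It follows that $Q_L\big(1_{T,R}^{low}u\big)=Q_L\big(1_{T,R}^{low}\,Q_{\sim L}u\big)$. Then, since $Q_L$ is a Fourier multiplier with symbol bounded by $1$ it is bounded on $L^2(\R^2)$, and combining this with H\"older's inequality and \eqref{low} from Lemma~\ref{ihigh-lem},
$$
\|Q_L(1_{T,R}^{low}u)\|_{L^2}=\|Q_L(1_{T,R}^{low}Q_{\sim L}u)\|_{L^2}
\lesssim \|1_{T,R}^{low}Q_{\sim L}u\|_{L^2}
\le \|1_{T,R}^{low}\|_{L^\infty}\,\|Q_{\sim L}u\|_{L^2}
\lesssim \|Q_{\sim L}u\|_{L^2},
$$
which is the desired bound. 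All the manipulations are legitimate for $u\in L^2(\R^2)$ since every multiplier involved is bounded.

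The only point requiring care — and the step I would double-check — is the frequency bookkeeping in the second paragraph: one must use that $\supp\widehat{1_{T,R}^{low}}$ is genuinely contained in $\{|\tau|\lesssim R\}$ (this is where the truncation to modulations $\lesssim R$ in \eqref{ind-dec} is essential), and then take the implicit constant in ``$L\gg R$'' large enough, compatibly with the width of the cutoff $\widetilde{\phi}$ defining the fattened projector $Q_{\sim L}$, so that the shell $\{|\tau-\omega_{\alpha+1}(\xi)|\in[L'/2-2R,2L'+2R]\}$ with $L'\sim L$ is indeed captured by $Q_{\sim L}$. This is precisely the content of Lemma~2.5 in \cite{MV}, whose proof this sketch follows.
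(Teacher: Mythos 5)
Your argument is correct: the support of $\widehat{1_{T,R}^{low}}$ in $\{|\tau|\lesssim R\}$ forces the input modulation to be comparable to the output modulation $L$ once $L\gg R$, and then $L^2$-boundedness of $Q_L$ together with the bound \eqref{low} gives the estimate. The paper itself only cites \cite{MV} for this lemma, and your proof is essentially the same support-localization argument used there, so there is nothing further to add.
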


We are now in a position to prove the main result of this section.

\begin{proposition}\label{L2bilin}
  Let $0<\alpha \le 1$. Assume  $0<t\le 1$ and $u_i\in Z^0$, $i=1,2,3$ are functions with spatial Fourier support in $\{|\xi|\sim N_i\}$ with $N_i$ dyadic. Let $\chi\in C^\infty(\R^2)$ satisfy the Marcinkiewicz condition (\ref{pseudoprod.1}).

If $N_{min}\lesssim 1$, then
  \begin{equation}\label{L2bilin.00}
   |G^2_{t,\chi}(u_1,u_2,u_3)| \lesssim N_{min}^{\frac12} \|u_1\|_{L^\infty_tL^2_x}\|u_2\|_{L^2_{tx}} \|u_3\|_{L^2_{tx}} \, .
  \end{equation}

 If $N_{min}\gg 1$, then
  \begin{equation}\label{L2bilin.0}
    |G^2_{t,\chi}(u_1,u_2,u_3)| \lesssim N_{min}^{-\frac 12-\frac{\alpha}4} N_{max}^{(1-\alpha)_+} \prod_{i=1}^3 \|u_i\|_{Z^0} \, ,
  \end{equation}
  where $G^2_{t,\chi}$ is defined in \eqref{Gtdef}.
\end{proposition}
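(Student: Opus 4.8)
# Proof Proposal for Proposition 3.5 ($L^2$-bilinear estimates)

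The plan is to reduce everything to the combination of the resonance identity \eqref{resonance}, the sum-space estimates of Lemma \ref{eqFsb}, and the generalized Coifman--Meyer theorem (Theorem \ref{pseudoprod}) together with Lemma \ref{omega.inv}. First I would treat the easy case $N_{min}\lesssim 1$. Here one simply writes $G^2_{t,\chi}(u_1,u_2,u_3)=\int_{\R_t}\Pi^2_{\chi}(u_a,u_b)u_c$ where I relabel so that $u_c$ carries the minimal frequency (using the symmetry \eqref{Gtsym.2}, at the cost of replacing $\chi$ by some $\chi_\sigma$ still satisfying the Marcinkiewicz condition by Remark \ref{symbolprod} and the discussion after \eqref{Gtsym.2}). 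Since $|\xi_c|\sim N_{min}\lesssim 1$, one has $\|u_c\|_{L^\infty_x}\lesssim N_{min}^{1/2}\|u_c\|_{L^2_x}$ by Bernstein; then Hölder in space-time, the boundedness of $\Pi^2_{\chi}$ on $L^2\times L^\infty\to L^2$ (Coifman--Meyer, or rather Theorem \ref{pseudoprod}), and putting the $L^\infty_tL^2_x$ norm on whichever factor is most convenient, gives \eqref{L2bilin.00}. One needs to be slightly careful that the $N_{min}^{1/2}$ gain is placed correctly but this is routine.

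For the main case $N_{min}\gg 1$, I would follow the strategy of \cite{MV}: introduce the cutoff $1_T=1_{T,R}^{low}+1_{T,R}^{high}$ from \eqref{ind-dec} with $R$ chosen comparable to the resonance size $N_{min}N_{max}^\alpha$ (so $R\sim N_{min}N_{max}^\alpha$). The $1^{high}$ contribution is controlled directly: by Lemma \ref{ihigh-lem}, $\|1^{high}_{T,R}\|_{L^1}\lesssim R^{-1}$, and combining with Hölder and the $L^2_{tx}$ control of each dyadic piece one extracts the factor $R^{-1}\sim N_{min}^{-1}N_{max}^{-\alpha}$, which is much better than needed. The substantive part is the $1^{low}$ term. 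There I would perform a dyadic decomposition $u_i=\sum_{L_i}Q_{L_i}u_i$ in the modulation variables. On the Fourier support, the modulations satisfy $L_{max}\gtrsim |\Omega_2|\sim N_{min}N_{max}^\alpha$ (this is the nonresonance; the $1^{low}$ cutoff only perturbs frequencies by $O(R)$ which is harmless for $L_{max}$ by Lemma \ref{ilow-lem}). I put the factor with the largest modulation into the estimate first: using \eqref{eqFsb.0} (valid since $L\lesssim N^2$ in the relevant range) or \eqref{eqFsb} more generally, $\|Q_{\gtrsim L}u_N\|_{L^2_{tx}}\lesssim (N L^{-1}\vee L^{-1/2})\|Q_{\gtrsim L}u_N\|_{F^{0,1/2}}\lesssim \ldots\|u_N\|_{Z^0}$, and the other two factors I estimate by their $L^\infty_tL^2_x$ norms (for the non-extreme-modulation pieces, using $L^2$-orthogonality in the modulation sum) or by an $L^2_tL^\infty_x$ norm when a derivative loss must be absorbed — this is where the third term $\|J^{(1-\alpha)_-}u\|_{L^2_tL^\infty_x}$ of the $Z^0$ norm enters and produces the $N_{max}^{(1-\alpha)_+}$ loss. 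The symbol $\chi$ is handled throughout by Theorem \ref{pseudoprod}, and when I need to trade the modulation gain $L^{-1}\sim (N_{min}N_{max}^\alpha)^{-1}$ against the multiplier I absorb the extra $N_{min}N_{max}^\alpha$ into $\chi$ via Lemma \ref{omega.inv}: the product $\chi\cdot\frac{N_{min}N_{max}^\alpha}{\Omega_2}$ still satisfies \eqref{pseudoprod.1} by Remark \ref{symbolprod}, so $\Pi^2$ of that composite symbol is still $L^p$-bounded.

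Bookkeeping the exponents: from $L_{max}^{-1}\sim N_{min}^{-1}N_{max}^{-\alpha}$ one keeps a square-root of it, $N_{min}^{-1/2}N_{max}^{-\alpha/2}$, when using the $L^{-1/2}$ form of Lemma \ref{eqFsb}, and in the worst high-low configuration one must spend $N_{max}^{(1-\alpha)_+}$ on a Bernstein/Strichartz step for the $L^2_tL^\infty_x$ factor; combining $N_{min}^{-1/2}$, $N_{max}^{-\alpha/2}\cdot N_{max}^{\alpha/4}$ type gains (the $N_{max}^{\alpha/4}$ coming from distributing the modulation gain against the $N_{min}\le N_{max}$ comparison) yields the claimed $N_{min}^{-1/2-\alpha/4}N_{max}^{(1-\alpha)_+}$. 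The main obstacle, as in \cite{MV}, is the interplay between the $1_T$ cutoff and the Bourgain-space norms: the characteristic function is not bounded in $X^{0,1/2}$, so one genuinely needs the $1^{low}/1^{high}$ splitting with the sharp choice $R\sim N_{min}N_{max}^\alpha$ and Lemma \ref{ilow-lem} to ensure the modulation localization survives the cutoff. A secondary subtlety is to verify that in every sub-case (which of the three frequencies is max/med/min, and which factor carries the max modulation) the exponents assemble correctly and the symbol manipulations via Lemma \ref{omega.inv} and Remark \ref{symbolprod} remain legitimate — in particular that the composite multiplier always retains the Marcinkiewicz structure on the relevant frequency block so Theorem \ref{pseudoprod} applies uniformly in the dyadic parameters.
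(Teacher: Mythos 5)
Your overall route is the paper's: split $1_t=1_{t,R}^{low}+1_{t,R}^{high}$ as in \eqref{ind-dec}, bound the high part by Lemma \ref{ihigh-lem} together with Bernstein, and in the low part decompose in modulation, use \eqref{resonance} to force $L_{max}\gtrsim N_{min}N_{max}^{\alpha}$, and convert this into a gain through Lemma \ref{ilow-lem}, Lemma \ref{eqFsb} and Theorem \ref{pseudoprod}; the case $N_{min}\lesssim 1$ is treated exactly as in the paper. The genuine problem is your choice $R\sim N_{min}N_{max}^{\alpha}$: Lemma \ref{ilow-lem} requires $L\gg R$, while the dominant contribution of the low part sits precisely at $L_{max}\sim N_{min}N_{max}^{\alpha}\sim R$. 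At that scale the multiplication by $1_{t,R}^{low}$ can create modulation $\sim R$ out of arbitrarily small modulations of $u_1$, so the modulation localization does \emph{not} survive the cutoff and no gain from the $F^{0,\frac12}$ norm can be extracted; the step you describe as ``harmless'' is exactly where the argument breaks. The paper instead takes $R=N_1^{1+\frac{\alpha}{4}}N_3^{\alpha-1}$, which is genuinely much smaller than $N_1N_3^{\alpha}$ (their ratio is $N_1^{\frac{\alpha}{4}}/N_3\ll1$ since $N_1\gg1$, $\alpha\le1$), yet still large enough that the high-part bound $N_1^{\frac12}\|1_{t,R}^{high}\|_{L^1}\lesssim N_1^{\frac12}R^{-1}$ equals the target $N_1^{-\frac12-\frac{\alpha}{4}}N_3^{1-\alpha}$; any $R$ with $N_1^{1+\frac{\alpha}{4}}N_3^{\alpha-1}\lesssim R\ll N_1N_3^{\alpha}$ works, but your endpoint choice as written does not.

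Two further inaccuracies in the bookkeeping. For $s=0$ the third component of the $Z^0$ norm is $\|J_x^{(0-s_\alpha)+(1-\alpha)_-}u\|_{L^2_tL^\infty_x}=\|J_x^{\frac{\alpha}{4}-\frac12-}u\|_{L^2_tL^\infty_x}$, not $\|J_x^{(1-\alpha)_-}u\|_{L^2_tL^\infty_x}$; hence placing a frequency-$N_3$ factor in $L^2_tL^\infty_x$ costs $N_3^{(\frac12-\frac{\alpha}{4})_+}$ (this is \eqref{L2bilin.06}), not $N_3^{(1-\alpha)_+}$. Moreover in the worst branch of \eqref{eqFsb.2} (the $L^{-1}N$ branch at $L\sim N_1N_3^{\alpha}$) the modulation gain is only $N_3^{-\alpha}$, so the factor $N_{min}^{-\frac12-\frac{\alpha}{4}}$ in \eqref{L2bilin.0} arises from trading $N_1\le N_3$ at the end, not from ``keeping a square root of the modulation gain''; your schematic exponent count does not verify the estimate, although the paper's computation shows that the same scheme does close. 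Finally, Lemma \ref{omega.inv} is not needed here: no division by $\Omega_2$ occurs in this proposition (it enters only in the modified-energy terms), so its invocation is a harmless but telling conflation with the energy estimates.
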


\begin{proof}
From \eqref{Gtsym.2} we may always assume $N_1\le N_2\le N_3$.
Estimate \eqref{L2bilin.00} is easily obtained thanks to Plancherel identity and Bernstein inequality. Thus it remains to deal with the case $N_1\gg 1$.
By localization considerations, $G_{t,\chi}^2$ vanishes unless $N_2\sim N_3$. Setting $R = N_1^{1+\frac\alpha 4}N_3^{\alpha-1}$, we split $G^2_{t,\chi}$ as
\begin{align}
G^2_{t,\chi}(u_1,u_2,u_3) &= G^2_\infty(1_{t,R}^{high}u_1,u_2, u_3) + G^2_\infty(1_{t,R}^{low}u_1,u_2, u_3) \nonumber \\
& :=G_t^{2,high} + G_t^{2,low}, \label{L2bilin.1}
\end{align}
 where $G^2_\infty(u,v,w) = \int_{\R^2}\Pi_\chi^2(u,v)w$ and $1_{t,R}^{high}$, $1_{t,R}^{low}$ are defined in \eqref{ind-dec}.

The contribution of $G_t^{2,high}$ is estimated thanks to Lemma \ref{ihigh-lem} as well as H\"{o}lder inequality by
\begin{align}
|G_t^{2,high}| &\lesssim N_1^{\frac12} \|1_{t,R}^{high}\|_{L^1} \|u_1\|_{L^\infty_tL^2_x} \|u_2\|_{L^\infty_tL^2_x}\|u_3\|_{L^\infty_tL^2_x} \label{L2bilin.3} \\
&\lesssim N_1^{-\frac 12-\frac\alpha 4}N_3^{1-\alpha} \prod_{i=1}^3 \|u_i\|_{Z^0}. \nonumber
\end{align}
To evaluate the contribution of $G_t^{2,low}$, we use Lemma \ref{res2} and we get
\begin{align}
G_t^{2,low}  =  & G^2_\infty(Q_{{ \gtrsim } N_1 N_3^\alpha}( 1_{t,R}^{low}u_1),u_2,  u_3)\nonumber \\
& +G^2_\infty(Q_{{ \ll }N_1 N_3^\alpha} ( 1_{t,R}^{low}u_1), Q_{{ \gtrsim }N_1 N_3^\alpha}  u_2, u_3 )\nonumber \\
& + G^2_\infty(Q_{{ \ll }N_1 N_3^\alpha}  ( 1_{t,R}^{low}u_1), Q_{{ \ll }N_1 N_3^\alpha}  u_2, Q_{\sim N_1 N_3^\alpha}u_3)\nonumber \\
&=: G_{t,1}^{2,low}+G_{t,2}^{2,low}+G_{t,3}^{2,low}\; . \label{L2bilin.4}
\end{align}
It is worth noticing that since $ N_1\gg 1 $,  we have $R \ll N_1 N_3^\alpha $.
Therefore the  contribution of $  G_{t,1}^{2,low} $ is easily estimated thanks to Lemma \ref{ilow-lem}, Theorem \ref{pseudoprod} and estimate \eqref{eqFsb.2} by
\begin{align}
 |G_{t,1}^{2,low}| &\lesssim \|\Pi_\chi^2(Q_{{ \gtrsim } N_1 N_3^\alpha}( 1_{t,R}^{low}u_1),u_2)\|_{L^2_tL^1_x} \|u_3\|_{L^2_tL^\infty_x} \nonumber\\
 &\lesssim \|Q_{\gtrsim N_1N_3^\alpha}(1_{t,R}^{low}u_1)\|_{L^2_{tx}} \|u_2\|_{L^\infty_tL^2_x} \|u_3\|_{L^2_tL^\infty_x} \nonumber\\
 &\lesssim \sum_{L_1\gtrsim N_1N_3^\alpha} (L_1^{-(\frac12)_+} \vee L_1^{-1}N_1)  N_3^{\frac 12-\frac\alpha 4} \|u_1\|_{F^{0,\frac12}} \|u_2\|_{Z^0}\|u_3\|_{Z^0} \nonumber\\
 &\lesssim N_1^{-\frac 12-\frac \alpha 4}N_3^{1-\alpha} \prod_{i=1}^3 \|u_i\|_{Z^0} \label{L2bilin.5},
\end{align}
where in the last step we used that $ 0\le \alpha\le 1 $.
Using again Theorem \ref{pseudoprod}, H\"{o}lder inequality and Lemma \ref{eqFsb} we estimate the contribution of $G_{t,2}^{2,low}$ by
\begin{align}
|G_{t,2}^{2,low}| &\lesssim \|Q_{\ll N_1N_3^\alpha}(1_{t,R}^{low}u_1)\|_{L^2_tL^{\infty-}_x} \|Q_{\gtrsim N_1N_3^\alpha}u_2\|_{L^2_tL^{2+}_x} \|u_3\|_{L^\infty_tL^2_x} \nonumber\\
 &\lesssim (N_1N_3^\alpha)^{-1}N_3 N_3^{0_+} \|Q_{\ll N_1N_3^\alpha}(1_{t,R}^{low}u_1)\|_{L^2_tL^{\infty-}_x} \|u_2\|_{F^{0,\frac12}} \|u_3\|_{Z^0} \nonumber\\
 &\lesssim N_1^{-\frac 12-\frac\alpha 4} N_3^{(1-\alpha)_+} \left(N_1^{-\frac 12+\frac\alpha 4} \|Q_{\ll N_1N_3^\alpha}(1_{t,R}^{low}u_1)\|_{L^2_tL^{\infty-}_x}\right) \|u_2\|_{Z^0} \|u_3\|_{Z^0}. \label{L2bilin.6}
\end{align}
On the other hand, observe that an interpolation argument provides
 \begin{equation}\label{L2bilin.06}
  N^{(\frac \alpha 4-\frac 12)} \|u_N\|_{L^2_tL^{\infty-}_x} \lesssim  \|u_N\|_{Z^0} \ \textrm{ if } \ N\gtrsim 1.
  \end{equation}
Since $Q_{\ll L}=I-Q_{\gtrsim L}$, we deduce that
\begin{align}
 N_1^{-\frac 12+\frac\alpha 4} \|Q_{\ll N_1N_3^\alpha}(1_{t,R}^{low}u_1)\|_{L^2_tL^{\infty-}_x} &\lesssim N_1^{-\frac 12+\frac\alpha 4}\|u_1\|_{L^2_tL^{\infty-}_x} + N_1^{\frac\alpha 4} \|Q_{\gtrsim N_1N_3^\alpha} u_1\|_{L^2_{tx}} \nonumber\\
 &\lesssim  \|u_1\|_{Z^0} + \sum_{L\gtrsim N_1N_3^\alpha} (L^{-\frac12}\vee L^{-1}N_1) N_1^{\frac\alpha 4}\|u_1\|_{F^{0,\frac12}} \nonumber\\
 &\lesssim  \|u_1\|_{Z^0}. \label{L2bilin.7}
 \end{align}
Combining \eqref{L2bilin.6}-\eqref{L2bilin.7} we infer
$$
|G_{t,2}^{2,low}| \lesssim N_1^{-\frac 12-\frac\alpha 4}N_3^{(1-\alpha)_+} \prod_{i=1}^3 \|u_i\|_{Z^0}.
$$
Finally, using Lemma \ref{QLbound}, the contribution of $G_{t,3}^{2,low}$ is estimated in the same way.
\end{proof}

\subsection{$L^2$-trilinear estimates}
We first state an elementary estimate.
\begin{proposition}\label{eL2trilin}
Let $0<\alpha \le 1$. Assume $0<t\le 1$ and $u_i\in Z^0$  $i=1,2,3,4$ are functions with spatial Fourier support in $\{|\xi|\sim N_i\}$ with $N_i$ dyadic. Let $\chi\in C^\infty(\R^3)$ satisfy the Marcinkiewicz condition (\ref{pseudoprod.1}).

Then it holds that
\begin{equation}\label{eL2trilin.2}
 |G_{t,\chi}^3(u_1,u_2,u_3,u_4)| \lesssim N_1^{(\frac12)_-}\cro{N_1}^{-\frac \alpha 4} N_2^{(\frac12)_-}\cro{N_2}^{-\frac \alpha 4} N_{max}^{0_+} \prod_{i=1}^4 \|u_i\|_{Z^0}.
\end{equation}
\end{proposition}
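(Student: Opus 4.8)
The plan is to reduce the quadrilinear form $G^3_{t,\chi}(u_1,u_2,u_3,u_4)$ to products of bilinear pieces that can be handled by the $L^2$-bilinear estimate of Proposition \ref{L2bilin} together with H\"older's inequality, the $L^2_tL^\infty_x$ bound encoded in $\|\cdot\|_{Z^0}$, and the generalized Coifman--Meyer Theorem \ref{pseudoprod}. First I would use the symmetry relation \eqref{Gtsym.2} to relabel the frequencies so that $N_1\le N_2\le N_3\le N_4$; by the support constraint $\xi_1+\xi_2+\xi_3+\xi_4=0$ the form vanishes unless $N_3\sim N_4$, so in particular $N_{max}\sim N_3\sim N_4$. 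The symbol $\chi$ restricted to the frequency annuli still satisfies \eqref{pseudoprod.1}, hence by Theorem \ref{pseudoprod} the associated multilinear operator $\Pi^3_\chi$ maps $L^{p_1}\times L^{p_2}\times L^{p_3}\to L^{p}$ for any admissible exponents, uniformly in the $N_i$.

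The natural split is according to whether the two smallest frequencies $N_1,N_2$ are $\lesssim 1$ or $\gg 1$. In the low-frequency regime I would place $u_1$ (and, if also small, $u_2$) in $L^\infty_tL^2_x$, gaining a factor $N_1^{1/2}$ (resp.\ $N_2^{1/2}$) from Bernstein's inequality on the corresponding $L^1_x\to L^2_x$ loss, exactly as in the proof of \eqref{L2bilin.00}; the remaining two factors go into $L^2_{tx}$ via the trivial bound $\|u_i\|_{L^2_{tx}}\lesssim \|u_i\|_{L^\infty_tL^2_x}\le\|u_i\|_{Z^0}$ (recall $t\le 1$), which reproduces the claimed powers $N_i^{(1/2)_-}\cro{N_i}^{-\alpha/4}$ with $\cro{N_i}\sim 1$ and the $N_{max}^{0_+}$ absorbing any logarithmic or $\epsilon$-loss. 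In the regime $N_1,N_2\gg 1$, I would group $\Pi^3_\chi(u_1,u_2,u_3)$ as a composition of two bilinear pseudoproducts: estimate one bilinear block involving the two lowest frequencies $N_1,N_2$ in $L^2_tL^\infty_x$ or $L^2_{tx}$, and pair it against $u_3,u_4$. Concretely, write $G^3_{t,\chi}$ as an integral of $\Pi^2(\cdot,\cdot)\cdot\Pi^2(\cdot,\cdot)$-type products after freezing one factor, and apply Proposition \ref{L2bilin}: the bilinear estimate \eqref{L2bilin.0} applied to the pair $(u_1,u_2,\cdot)$ with $N_{min}=N_1\gg1$ yields $N_1^{-1/2-\alpha/4}N_2^{(1-\alpha)_+}$, and a second application to the pair built from $u_3,u_4$ (both $\sim N_{max}$) together with the interpolation inequality \eqref{L2bilin.06}, i.e.\ $N^{\alpha/4-1/2}\|u_N\|_{L^2_tL^{\infty-}_x}\lesssim\|u_N\|_{Z^0}$, produces the matching $N_2$-decay $N_2^{-1/2-\alpha/4}$ needed to upgrade $N_2^{(1-\alpha)_+}$ to $N_2^{(1/2)_-}\cro{N_2}^{-\alpha/4}$ — here one checks the bookkeeping $(1-\alpha)_+ - \tfrac12-\tfrac\alpha4 = (\tfrac12)_- -\tfrac\alpha4$ up to $0_+$ losses, using $0<\alpha\le1$. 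The residual $N_{max}^{0_+}$ comes from summing over the interior (medium) dyadic frequency and from the $\infty-$ in the Lebesgue exponents.

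The main obstacle I anticipate is the careful decomposition of the trilinear pseudoproduct into bilinear pieces whose symbols still satisfy the Marcinkiewicz condition \eqref{pseudoprod.1}, so that Theorem \ref{pseudoprod} applies at each stage: one must insert Littlewood--Paley cutoffs on the intermediate output frequency $\xi_1+\xi_2$ and verify, via Remark \ref{symbolprod}, that the product of $\chi$ with these cutoffs is still an admissible symbol, and similarly that dividing by a resonance factor (as in Lemma \ref{omega.inv}) when one needs to exploit modulation is legitimate. A second, more technical point is handling the characteristic function $1_{]0,t[}$: as in the proof of Proposition \ref{L2bilin} one splits $1_t = 1_{t,R}^{high}+1_{t,R}^{low}$ with a well-chosen $R$ depending on $N_1,N_2,N_{max}$, uses Lemma \ref{ihigh-lem} to bound the high part crudely in $L^1$, and Lemmas \ref{ilow-lem}--\ref{QLbound} to keep the low part inside the Bourgain spaces; this is where most of the routine but delicate case analysis lives, but since \eqref{eL2trilin.2} only claims a relatively lossy bound (it is an ``elementary estimate''), I expect a single dyadic decomposition in the modulation of $u_1$ or $u_2$ to suffice without the refined cancellations needed elsewhere in the paper.
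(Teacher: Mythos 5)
Your treatment of the regime $N_1,N_2\gg 1$ contains a genuine gap, and it is also a detour from what is actually needed. The paper's proof of \eqref{eL2trilin.2} is elementary: apply the generalized Coifman--Meyer bound \eqref{pseudoprod.2} together with H\"older in time, placing $u_1,u_2$ in $L^2_tL^{\infty-}_x$ and $u_3,u_4$ in $L^\infty_tL^2_x$ (the $N_{max}^{0_+}$ absorbs the $\infty-$ losses), and then convert $\|u_{N_i}\|_{L^2_tL^{\infty-}_x}$ into $N_i^{(\frac12)_-}\cro{N_i}^{-\frac\alpha4}\|u_{N_i}\|_{Z^0}$ via the Bernstein bound \eqref{eL2trilin.4} for $N_i\lesssim 1$ and the interpolation bound \eqref{L2bilin.06} for $N_i\gtrsim 1$. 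No modulation decomposition, no splitting of $1_{]0,t[}$, no resonance function enters; the claimed bound \emph{grows} in $N_1,N_2$, so none of the dispersive machinery of Proposition \ref{L2bilin} is required. You do cite \eqref{L2bilin.06}, but only as an auxiliary step inside a much heavier scheme, and you miss that it already closes the estimate directly.

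The concrete problems with your scheme are the following. First, Proposition \ref{L2bilin} bounds the full space-time integral $G^2_{t,\chi}$ of three $Z^0$ functions with localized Fourier support; it is not an operator bound that can be composed. To apply it to the pair $(u_1,u_2,\Pi^2(u_3,u_4))$ you would need to control $\|\Pi^2_\chi(u_3,u_4)\|_{Z^0}$ (in particular its $F^{0,\frac12}$, i.e.\ $X^{-1,1}+X^{0,(\frac12)_+}$, component and its weighted $L^2_tL^\infty_x$ component) by $\|u_3\|_{Z^0}\|u_4\|_{Z^0}$; no such bilinear estimate is available in the paper, and establishing one is precisely the kind of nontrivial work the authors avoid here. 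Second, your exponent bookkeeping is wrong: $(1-\alpha)_+-\frac12-\frac\alpha4=\frac12-\frac{5\alpha}4$ (up to $0_+$), which is not $(\frac12)_--\frac\alpha4$ for any $\alpha>0$; the strong decay you are aiming for is of the type proved in Proposition \ref{L2trilin}, which requires $N_{min}\ll N_{thd}$ and a genuine modulation analysis, and is not what \eqref{eL2trilin.2} asserts. Finally, note that the gains in \eqref{eL2trilin.2} are attached to the \emph{first two arguments} $u_1,u_2$, not to the two smallest frequencies; your relabelling to $N_1\le N_2\le N_3\le N_4$ would prove a formally stronger statement, which is harmless only because the simple H\"older/Coifman--Meyer argument works for any choice of the two factors placed in $L^2_tL^{\infty-}_x$ --- but your proposal does not supply that argument.
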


\begin{proof}
  We get from \eqref{pseudoprod.2} together with H\"{o}lder and Bernstein inequalities that
  \begin{align*}
    |G_{t, \chi}^3(u_1,u_2,u_3,u_4)| &\lesssim \|\Pi_\chi^3(u_1,u_2,u_3)\|_{L^1_t L^{2}_x} \|u_4\|_{L^\infty_t L^2_x}\\
    &\lesssim  N_{max}^{0_+}\|u_1\|_{L^2_t L^{\infty-}_x} \|u_2\|_{L^2_t L^{\infty-}_x} \|u_3\|_{L^\infty_t L^2_x} \|u_4\|_{L^\infty_t L^2_x}.
  \end{align*}
  We conclude the proof of estimate \eqref{eL2trilin.2} combining
  \begin{equation}\label{eL2trilin.4}
  \|u_N\|_{L^2_t L^{\infty-}_x} \lesssim N^{(\frac12)_-} \|u_N\|_{L^\infty_t L^2_x} \ \textrm{ if } \ N\lesssim 1,
  \end{equation}
  with \eqref{L2bilin.06}.
  \end{proof}

Now we define the resonance function of order 3 by
$$
  \Omega_3(\xi_1,\xi_2,\xi_3) = \omega_{\alpha+1}(\xi_1+\xi_2+\xi_3) - \omega_{\alpha+1}(\xi_1) - \omega_{\alpha+1}(\xi_2) - \omega_{\alpha+1}(\xi_3).
$$
For $\xi_1, \xi_2, \xi_3, \xi_4\in\R$, it will be convenient to define the quantities $|\xi_{max}|\ge |\xi_{sub}|\ge |\xi_{thd}|\ge |\xi_{min}|$ to be the maximum, sub-maximum, third-maximum and minimum of $|\xi_1|, |\xi_2|, |\xi_3|$ and $|\xi_4|$ respectively.

\begin{lemma}
  Let $\alpha>0$. Let $\xi_1,\xi_2,\xi_3\in\R$ and $\xi_4=-(\xi_1+\xi_2+\xi_3)$. If we assume that $|\xi_{min}|\ll |\xi_{thd}|$ then it holds
  \begin{equation}\label{res3}
    |\Omega_3(\xi_1,\xi_2,\xi_3)| \sim |\xi_{thd}| |\xi_{max}|^\alpha.
  \end{equation}
\end{lemma}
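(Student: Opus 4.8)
The plan is to reduce the statement to a careful case analysis on the relative sizes of $|\xi_1|,|\xi_2|,|\xi_3|,|\xi_4|$, using as the main tool the already-established order-2 resonance identity \eqref{resonance} together with Hypothesis \ref{hyp1} and the mean value theorem. The key algebraic observation is that $\Omega_3$ can be written as a sum of two order-2 resonance functions: writing $\xi_4 = -(\xi_1+\xi_2+\xi_3)$ and introducing an auxiliary frequency, for instance
\[
\Omega_3(\xi_1,\xi_2,\xi_3) = \Omega_2(\xi_1+\xi_2,\xi_3) + \Omega_2(\xi_1,\xi_2),
\]
and similarly with the roles of the indices permuted (using \eqref{pseudoprodsym.1}-type symmetry of $\Omega_3$ under permutations of $\xi_1,\xi_2,\xi_3$). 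Each application of \eqref{resonance} to the right-hand side gives a term of size (its two smallest frequencies) $\times$ (its largest frequency)$^\alpha$, and the task is to choose the grouping so that these two contributions do not cancel at leading order and so that the resulting size is $|\xi_{thd}||\xi_{max}|^\alpha$.

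First I would fix the ordering, say by symmetry under permutations of $\xi_1,\xi_2,\xi_3$ I can assume $|\xi_1| \le |\xi_2| \le |\xi_3|$; then I split according to where $|\xi_4|$ sits relative to these. The hypothesis $|\xi_{\min}| \ll |\xi_{\mathrm{thd}}|$ rules out the degenerate situations where the two smallest among the four are comparable, which is exactly what one needs to avoid cancellation between the two $\Omega_2$ pieces. In the main case, $|\xi_{\max}| \sim |\xi_{\mathrm{sub}}|$ are the two large frequencies (they must balance since $\xi_1+\cdots+\xi_4 = 0$), and one picks the decomposition that groups one large frequency with the two small ones; the $\Omega_2$ term built from $|\xi_{\mathrm{thd}}|$ and $|\xi_{\max}|$ then dominates and has size $\sim |\xi_{\mathrm{thd}}||\xi_{\max}|^\alpha$ by \eqref{resonance}, while the other $\Omega_2$ term, built from $|\xi_{\min}|$ and something of size $\lesssim |\xi_{\mathrm{thd}}|$, is controlled by $|\xi_{\min}||\xi_{\mathrm{thd}}|^\alpha \ll |\xi_{\mathrm{thd}}||\xi_{\max}|^\alpha$ precisely because of the gap hypothesis and $0<\alpha\le 1$ (more generally $\alpha>0$). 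The lower bound $|\Omega_3| \gtrsim |\xi_{\mathrm{thd}}||\xi_{\max}|^\alpha$ follows since the dominant term cannot be cancelled; the upper bound is immediate from the triangle inequality and Hypothesis \ref{hyp1}.

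The remaining cases are when $|\xi_4|$ is itself the third-largest or the minimum, and the sub-case where there is no frequency comparable to $|\xi_{\max}|$ other than the sum constraint forcing cancellation among the top ones — but under $\xi_1+\cdots+\xi_4=0$ the two largest moduli are automatically comparable, so this is not really an additional case. I would handle these by choosing, in each instance, the grouping of \eqref{Omega2}-type that isolates $|\xi_{\mathrm{thd}}|$ and $|\xi_{\max}|$ in one $\Omega_2$; the mean value theorem applied to $\omega_{\alpha+1}$ on the relevant frequency scales, exactly as in the proof of Lemma \ref{res2} in \cite{MV}, then yields the matching upper and lower bounds. The main obstacle — and the only place the hypothesis $|\xi_{\min}|\ll|\xi_{\mathrm{thd}}|$ is genuinely used — is ensuring the two $\Omega_2$ pieces do not cancel each other at the scale $|\xi_{\mathrm{thd}}||\xi_{\max}|^\alpha$; once that separation of scales is in place, the estimate is routine bookkeeping.
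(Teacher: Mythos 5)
Your proposal is correct and follows essentially the same route as the paper: the paper's proof also reduces \eqref{res3} to Lemma \ref{res2} by writing $\Omega_3$ as a sum of two order-2 resonances (after normalizing, WLOG, $|\xi_1|\ll|\xi_2|\le|\xi_3|\sim|\xi_4|$, it uses $\Omega_3(\xi_1,\xi_2,\xi_3)=\Omega_2(\xi_2+\xi_3,\xi_1)+\Omega_2(\xi_2,\xi_3)$), with the gap hypothesis $|\xi_{min}|\ll|\xi_{thd}|$ guaranteeing that the term of size $|\xi_{thd}||\xi_{max}|^\alpha$ dominates. Your grouping $\Omega_2(\xi_1,\xi_2)+\Omega_2(\xi_1+\xi_2,\xi_3)$ is only a cosmetic variant of the same identity, so the two arguments coincide in substance.
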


\begin{proof}
  Without loss of generality, we may assume $|\xi_1|\ll |\xi_2| \le |\xi_3| \sim |\xi_4|$. Then, estimate \eqref{res3} is a consequence of the identity
  $$
  \Omega_3(\xi_1,\xi_2,\xi_3) = \Omega_2(\xi_2+\xi_3, \xi_1) + \Omega_2(\xi_2,\xi_3)
  $$
  combined with Lemma \ref{res2}.
\end{proof}

\begin{proposition}\label{L2trilin}
Let $0<\alpha \le 1$. Assume $0<t\le 1$ and $u_i\in Z^0$, $i=1,2,3,4$ are functions with spatial Fourier support in $\{|\xi|\sim N_i\}$ with $N_i$ dyadic satisfying $N_{min}\ll N_{thd}$ and $N_{max}\gg 1$.  Let $\chi\in C^\infty(\R^3)$ satisfy the Marcinkiewicz condition (\ref{pseudoprod.1}). Then,
\begin{equation}\label{L2trilin.0}
    |G^3_{t,\chi}(u_1,u_2,u_3,u_4)| \lesssim N_{min}^{(\frac12)_-} \cro{N_{thd}}^{-\frac12-\frac \alpha 4} N_{max}^{(1-\alpha)_+} \prod_{i=1}^4 \|u_i\|_{Z^0}.
\end{equation}
\end{proposition}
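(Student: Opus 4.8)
The plan is to mimic the structure of the proof of Proposition \ref{L2bilin}, now with three input functions and one output, taking advantage of the improved resonance relation \eqref{res3} that holds under the hypothesis $N_{min}\ll N_{thd}$. First I would use the symmetry relation \eqref{Gtsym.2} to reduce to the case $N_1\le N_2\le N_3\le N_4$ (so $N_{min}=N_1$, $N_{thd}=N_2$, $N_{max}=N_4$), and note that by frequency localization $G^3_{t,\chi}$ vanishes unless $N_3\sim N_4\sim N_{max}$. Since $N_{max}\gg 1$ we have $N_{thd}=N_2$ with $\cro{N_{thd}}\sim N_2$ when $N_2\gtrsim 1$; the case $N_2\lesssim 1$ will be handled separately and is essentially Proposition \ref{eL2trilin} (there $\cro{N_{thd}}^{-\frac12-\frac\alpha4}\sim 1$ and one checks $N_1^{(\frac12)_-}N_2^{(\frac12)_-}N_{max}^{0_+}\lesssim N_{min}^{(\frac12)_-}N_{max}^{(1-\alpha)_+}$ when $N_2\lesssim1$, using $(1-\alpha)_+\ge 0$). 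So the core is $N_2\gg 1$, hence $N_{thd},N_{max}\gg 1$.

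In that regime I would introduce the time-cutoff splitting as in \eqref{ind-dec}, with the threshold chosen to match the resonance level: set $R = N_2^{\frac12+\frac\alpha4}N_{max}^{\alpha-1}$ (or a comparable power) so that $R\ll N_{thd}N_{max}^\alpha\sim |\Omega_3|$, and write $G^3_{t,\chi}(u_1,u_2,u_3,u_4) = G^3_\infty(1_{t,R}^{high}u_1,u_2,u_3,u_4) + G^3_\infty(1_{t,R}^{low}u_1,u_2,u_3,u_4)=:G_t^{3,high}+G_t^{3,low}$. The high-modulation-of-the-cutoff piece $G_t^{3,high}$ is controlled by Lemma \ref{ihigh-lem} (giving $\|1_{t,R}^{high}\|_{L^1}\lesssim R^{-1}$), Theorem \ref{pseudoprod} and Hölder, placing all four factors in $L^\infty_tL^2_x\hookrightarrow Z^0$ together with two Bernstein factors $N_1^{1/2}$ and a $N_{max}^{0_+}$: this yields $|G_t^{3,high}|\lesssim N_1^{\frac12}R^{-1}N_{max}^{0_+}\prod\|u_i\|_{Z^0}$, and one checks the arithmetic of exponents gives at most $N_1^{(\frac12)_-}N_2^{-\frac12-\frac\alpha4}N_{max}^{(1-\alpha)_+}$. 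For $G_t^{3,low}$ I would use the resonance identity $\Omega_3=\omega_{\alpha+1}(\xi_1+\cdots+\xi_4)-\sum\omega_{\alpha+1}(\xi_i)$ together with $\tau_1+\cdots+\tau_4=0$ to force at least one of the four functions to have modulation $\gtrsim N_{thd}N_{max}^\alpha$; then I split into cases according to which modulation is largest, exactly paralleling \eqref{L2bilin.4}: the case where $u_1$ (the lowest-frequency factor) carries the big modulation, using Lemma \ref{ilow-lem} to transfer the cutoff, Lemma \ref{eqFsb}/\eqref{eqFsb.2} to convert the $L^2_{tx}$ norm of $Q_{\gtrsim N_2N_{max}^\alpha}(1_{t,R}^{low}u_1)$ into $(N_2N_{max}^\alpha)^{-1}N_1\vee (N_2N_{max}^\alpha)^{-1/2}$ times $\|u_1\|_{F^{0,\frac12}}$; and the cases where one of $u_2,u_3,u_4$ carries it, using Lemma \ref{eqFsb} on that factor and Lemma \ref{QLbound} on the others, distributing the remaining factors among $L^\infty_tL^2_x$ and $L^2_tL^{\infty-}_x$ and invoking the interpolation bounds \eqref{L2bilin.06}, \eqref{eL2trilin.4} (which give $N^{\frac\alpha4-\frac12}\|u_N\|_{L^2_tL^{\infty-}_x}\lesssim\|u_N\|_{Z^0}$ for $N\gtrsim1$ and $N^{-\frac12}\|u_N\|_{L^2_tL^{\infty-}_x}\lesssim\|u_N\|_{L^\infty_tL^2_x}$ for $N\lesssim1$). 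In every case the gain $(N_2N_{max}^\alpha)^{-1}$ or $(N_2N_{max}^\alpha)^{-1/2}$ from the modulation, combined with a Bernstein factor $\lesssim N_{max}^{1/2}$ (from an $L^2_x\to L^\infty_x$ on a high-frequency factor) or the $\|\cdot\|_{L^2_tL^{\infty-}_x}$ norm on a high-frequency factor (costing $N_{max}^{\frac12-\frac\alpha4+}$ by \eqref{L2bilin.06}), is arranged to produce exactly $N_1^{(\frac12)_-}N_2^{-\frac12-\frac\alpha4}N_{max}^{(1-\alpha)_+}$.

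The main obstacle, as in Proposition \ref{L2bilin}, is bookkeeping the exponents so that the worst case — where the big modulation sits on one of the two high-frequency factors $u_3,u_4$ and one is forced to spend a full Bernstein power $N_{max}^{1/2}$ or an $L^2_tL^{\infty-}_x$ norm on a high frequency — still closes with the stated power $N_{max}^{(1-\alpha)_+}$ and not something worse like $N_{max}^{1-\frac\alpha2}$. The key structural input making this work is precisely the strengthened resonance estimate \eqref{res3}: under $N_{min}\ll N_{thd}$ one recovers $|\Omega_3|\sim N_{thd}N_{max}^\alpha$ rather than the weaker $N_{min}N_{max}^\alpha$, and it is this extra factor of $N_{thd}/N_{min}$ (equivalently, replacing $N_1^{-\frac12-\frac\alpha4}$ by $N_1^{(\frac12)_-}N_2^{-\frac12-\frac\alpha4}$ with the derivative-recovering power attached to the \emph{third}-largest frequency) that lets the cubic estimate be summable over dyadic blocks in the energy argument. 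I would also take care, when $N_2\gg1$ but $N_1\lesssim1$, to keep the factor $N_1^{(\frac12)_-}$ coming from a Bernstein inequality applied to the lowest-frequency function rather than from a modulation gain, since $F^{0,\frac12}$-type gains on $u_1$ are only available when $L\lesssim N_1^2$ fails here.
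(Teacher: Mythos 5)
Your outline reproduces the paper's proof of Proposition \ref{L2trilin} essentially step for step: reduction via \eqref{Gtsym.2} to $N_1\ll N_2\le N_3\sim N_4$, disposal of the case $N_2\lesssim 1$ (and $N_2N_4^\alpha\lesssim 1$) by Proposition \ref{eL2trilin}, the splitting $1_t=1_{t,R}^{high}+1_{t,R}^{low}$ of Lemma \ref{ihigh-lem}, and, for the low part, a modulation decomposition driven by \eqref{res3} with the four cases $L_{max}=L_i$ handled through Lemmas \ref{ilow-lem} and \ref{QLbound}, estimate \eqref{eqFsb.2} on the factor carrying the dominant modulation, and the interpolation bounds \eqref{L2bilin.06}, \eqref{eL2trilin.4} (the paper's \eqref{L2bilin.7}) on the others, with the Bernstein factor $N_1^{(\frac12)_-}$ taken on the lowest-frequency function. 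This is exactly the paper's strategy.

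The one step that does not hold as written is the high part. Once $\|1_{t,R}^{high}\|_{L^1_t}\lesssim R^{-1}$ has been spent on the time integral, all four factors are only available in $L^\infty_t L^2_x$, so the H\"older step in $x$ forces Bernstein on the \emph{two} lowest frequencies, i.e. a cost $N_1^{\frac12}N_2^{\frac12}$, not the single factor $N_1^{\frac12}$ appearing in your bound $|G_t^{3,high}|\lesssim N_1^{\frac12}R^{-1}N_{max}^{0_+}\prod\|u_i\|_{Z^0}$. With your choice $R=N_2^{\frac12+\frac\alpha4}N_4^{\alpha-1}$, the correctly computed bound $R^{-1}N_1^{\frac12}N_2^{\frac12}$ exceeds the target $N_1^{(\frac12)_-}N_2^{-\frac12-\frac\alpha4}N_4^{(1-\alpha)_+}$ by a factor of order $N_2^{\frac12}$, and trading the $L^1_t$ of the cutoff for $\|1_{t,R}^{high}\|_{L^2_t}\lesssim R^{-\frac12}$ together with one factor in $L^2_tL^{\infty-}_x$ only makes the power of $N_4$ worse. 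The repair is simply the paper's choice $R=N_2^{1+\frac\alpha4}N_4^{\alpha-1}$: it still satisfies $R\ll N_2N_4^\alpha$ (since $N_4\gg 1$ and $\alpha\le 1$), so Lemma \ref{ilow-lem} and the whole low-part case analysis go through unchanged, while the high part then gives $R^{-1}N_1^{\frac12}N_2^{\frac12}=N_1^{\frac12}N_2^{-\frac12-\frac\alpha4}N_4^{1-\alpha}$, which is admissible. With that correction your plan coincides with the paper's argument.
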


\begin{proof}
  From \eqref{Gtsym.2} it is sufficient to consider the case $N_1\ll N_2\le N_3 \sim N_4$.
  Moreover, we may assume that $N_2N_4^\alpha\gg 1$ and $N_2\gg 1$ since otherwise the claim follows from estimate \eqref{eL2trilin.2}.
  We proceed now as in the proof of Proposition \ref{L2bilin}. First we decompose $G_t^3$ as $G_t^{3,high}+G_t^{3,low}$ with
  $$
  G_t^{3,high}(u_1,u_2,u_3,u_4) = G_\infty^3(1_{t,R}^{high} u_1, u_2,u_3,u_4) = \int_{\R^2} \Pi_\chi^3(1_{t,R}^{high} u_1, u_2,u_3)u_4 \, dxdt
  $$
  and $R = N_2^{1+\frac \alpha 4} N_4^{\alpha-1}\ll N_2N_4^\alpha$. The high-part is easily estimated thanks to Lemma \ref{ihigh-lem} by
  \begin{align}
    |G_t^{3,high}| &\lesssim R^{-1} N_1^{\frac12}N_2^{\frac12} \prod_{i=1}^4 \|u_i\|_{L^\infty_tL^2_x},
  \end{align}
  which is acceptable. To deal with the low-part, we decompose with respect of the modulation variables. Thus
  $$
  G_t^{3, low} = \sum_{L_1, L_2, L_3, L_4} G_\infty^3(Q_{L_1}(1_{t, R}^{low}u_1), Q_{L_2}u_2, Q_{L_3}u_3, Q_{L_4}u_4).
  $$

  According to \eqref{res3} the above sum is nontrivial only for $L_{max}\gtrsim N_2N_4^\alpha$. In the case where $L_{max}=L_1$, we deduce from \eqref{pseudoprod.2}-\eqref{eqFsb.2}-\eqref{L2bilin.06} and Lemma \ref{ilow-lem} that
  \begin{align*}
    |G_{t,1}^{3,low}| &\lesssim N_1^{(\frac12)_-} N_4^{0_+} \|Q_{\gtrsim N_2N_4^\alpha}(1_{t,R}^{low}u_1)\|_{L^2_{t,x}} \|u_2\|_{L^2_tL^{\infty-}_x} \|u_3\|_{L^\infty_tL^2_x} \|u_4\|_{L^\infty_tL^2_x}\\
    &\lesssim \sum_{L_1\gtrsim N_2N_4^\alpha} (L_1^{-(\frac12)_+} \vee L_1^{-1}N_1) N_1^{(\frac12)_-} N_2^{\frac 12-\frac \alpha 4} N_4^{0_+} \|u_1\|_{F^{0,\frac12}} \|u_2\|_{Z^0} \|u_3\|_{Z^0} \|u_4\|_{Z^0}\\
    &\lesssim N_1^{(\frac12)_-} N_2^{-\frac 12-\frac \alpha 4} N_4^{(1-\alpha)_+} \prod_{i=1}^4 \|u_i\|_{Z^0}.
  \end{align*}
  In the same way, we get that the sum over $L_{max}=L_2$ is controlled by
  \begin{align*}
    |G_{t,2}^{3,low}| &\lesssim N_1^{(\frac12)_-} N_4^{0_+} \|Q_{\ll N_2N_4^\alpha}(1_{t,R}^{low}u_1)\|_{L^\infty_tL^2_x} \|Q_{\gtrsim N_2N_4^\alpha}u_2\|_{L^2} \|u_3\|_{L^2_tL^{\infty-}_x} \|u_4\|_{L^\infty_tL^2_x}\\
    &\lesssim \sum_{L_2\gtrsim N_2N_4^\alpha} (L_2^{-(\frac12)_+} \vee L_2^{-1}N_2) N_1^{(\frac12)_-} N_4^{(\frac 12-\frac \alpha 4)+} \|u_1\|_{L^\infty_tL^2_x} \|u_2\|_{F^{0,\frac12}} \|u_3\|_{Z^0} \|u_4\|_{Z^0}\\
    &\lesssim N_1^{(\frac12)_-} N_2^{-\frac 12-\frac \alpha 4} N_4^{(1-\alpha)_+} \prod_{i=1}^4 \|u_i\|_{Z^0}.
  \end{align*}
  Arguing similarly and using \eqref{L2bilin.7}, the sum over $ L_{max}=L_3 $ can be estimated by
  \begin{align*}
    |G_{t,3}^{3,low}| &\lesssim N_1^{(\frac12)_-} N_4^{0_+} \|Q_{\ll N_2N_4^\alpha}(1_{t,R}^{low}u_1)\|_{L^\infty_tL^2_x} \|Q_{\ll N_2N_4^\alpha}u_2\|_{L^2_tL^{\infty-}_x} \|Q_{\gtrsim N_2N_4^\alpha}u_3\|_{L^2} \|u_4\|_{L^\infty_tL^2_x}\\
    &\lesssim \sum_{L_3\gtrsim N_2N_4^\alpha} (L_3^{-(\frac12)_+} \vee L_3^{-1}N_4) N_1^{(\frac 12)-} N_2^{\frac 12-\frac \alpha 4} N_4^{0_+} \|u_1\|_{L^\infty_tL^2_x} \|u_2\|_{Z^0} \|u_3\|_{F^{0,\frac12}} \|u_4\|_{Z^0}\\
    &\lesssim N_1^{(\frac12)_-} N_2^{-\frac12-\frac \alpha 4} N_4^{(1-\alpha)_+} \prod_{i=1}^4 \|u_i\|_{Z^0}.
  \end{align*}
  Finally we easily check that the bound in the case $L_{max}=L_4$ is obtained similarly. Gathering all these estimates we get the desired result.

\end{proof}

\section{Estimates for a smooth solution}
The aim of this section is to get suitable a priori estimates of a solution of \eqref{dpB} in the space $Y^s\hookrightarrow Z^s$ for $s>s_\alpha$.
\subsection{Bilinear estimate}
\begin{proposition}
Assume that $0<T\le 1$ and $s\ge 0$. Let $u$ be a smooth solution to (\ref{dpB}) defined in the time interval $[0,T]$. Then
\begin{equation} \label{be.0}
\|u\|_{X^{s-1,1}_T} \lesssim \|u_0\|_{H^s} + \|u\|_{L^2_TL^\infty_x} \|u\|_{L^\infty_TH^s_x}.
\end{equation}
\end{proposition}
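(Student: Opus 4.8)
The plan is to reduce the estimate to the linear estimates for the Bourgain spaces $X^{s,b}$ together with the fractional Leibniz rule, following the strategy already outlined in the introduction around \eqref{bilinear}. First I would extend $u$ from $[0,T]$ to all of $\R$ by an appropriate extension operator, but it is cleaner to argue directly with the Duhamel formula on the time interval. Writing the equation as $\partial_t u + L_{\alpha+1}u = \partial_x(u^2)$ and using the standard linear estimates in Bourgain spaces (see e.g. \cite{MV}), one has for $0<T\le 1$ and $b'\in(1/2,1]$, $b\in[0,b'-1]$ roughly
\begin{equation*}
\|u\|_{X^{\sigma,b'}_T} \lesssim \|u_0\|_{H^\sigma} + \|\partial_x(u^2)\|_{X^{\sigma,b}_T},
\end{equation*}
where the crucial point is that the time-localization allows one to absorb the $b'-1\le 0$ power. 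Applying this with $\sigma = s-1$, $b'=1$ and $b=0$ gives
\begin{equation*}
\|u\|_{X^{s-1,1}_T} \lesssim \|u_0\|_{H^{s-1}} + \|\partial_x(u^2)\|_{X^{s-1,0}_T} = \|u_0\|_{H^{s-1}} + \|\partial_x(u^2)\|_{L^2_{T}H^{s-1}_x}.
\end{equation*}
Since $\|u_0\|_{H^{s-1}}\le \|u_0\|_{H^s}$, the first term is already of the desired form.

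For the nonlinear term, I would bound $\|\partial_x(u^2)\|_{L^2_TH^{s-1}_x}\lesssim \|u^2\|_{L^2_TH^s_x}$ because $\|\partial_x f\|_{H^{s-1}}\lesssim \|f\|_{H^s}$, and then invoke the fractional Leibniz rule (Kato–Ponce): for $s\ge 0$,
\begin{equation*}
\|u^2\|_{H^s_x} \lesssim \|u\|_{L^\infty_x}\|u\|_{H^s_x}.
\end{equation*}
Taking the $L^2$ norm in time and using H\"older's inequality in $t$ with exponents $2$ and $\infty$ then yields
\begin{equation*}
\|u^2\|_{L^2_TH^s_x} \lesssim \|u\|_{L^2_TL^\infty_x}\|u\|_{L^\infty_TH^s_x},
\end{equation*}
which closes the estimate. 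One subtlety is that the fractional Leibniz rule is usually stated with two terms, $\|u\|_{L^{\infty}}\|v\|_{H^s}+\|v\|_{L^\infty}\|u\|_{H^s}$, but with $u=v$ both terms coincide, so no loss occurs.

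The main technical obstacle, and the only point requiring care, is the justification of the linear estimate on the restriction-in-time space $X^{s-1,1}_T$ with the top regularity index $b=1$ in the time-modulation variable: one must check that the extension/truncation argument and the time-localization lemma (gaining a power of $T$, or at least not losing, when passing from $b=0$ on the source term to $b'=1$ on the solution) are valid at this endpoint. This is standard — it is exactly the mechanism used in \cite{MV} to set up the space $X^{s-1,1}$ — and since $0<T\le 1$ there is no positive power of $T$ needed on the right-hand side, so the constants are uniform. A second, very minor point is that smoothness of $u$ guarantees all the quantities appearing are finite, so the formal manipulations (Duhamel formula, extension) are legitimate; this is why the statement is phrased for smooth solutions. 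Everything else is a direct combination of the cited classical tools.
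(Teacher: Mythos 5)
Your argument is correct and coincides with the paper's own proof: the paper likewise applies the Duhamel/Bourgain linear estimate at the endpoint $b'=1$, $b=0$ to reduce to $\|u_0\|_{H^{s-1}}+\|\partial_x(u^2)\|_{X^{s-1,0}_T}$, bounds this by $\|u_0\|_{H^s}+\|J^s_x(u^2)\|_{L^2_{x,T}}$, and concludes with the fractional Leibniz rule of Kenig--Ponce--Vega together with H\"older in time. The only cosmetic difference is your more explicit discussion of the endpoint time-localization, which the paper treats as standard (as in \cite{MV}).
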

\begin{proof}
By using  the fractional Leibniz rule (c.f. Kenig, Ponce and Vega \cite{KPV}), we have for $s \ge 0$
\begin{equation} \label{be}
\begin{split}
\|u\|_{X^{s-1,1}_T} &\lesssim \|u_0\|_{H^{s-1}}+ \|\partial_x(u^2)\|_{X^{s-1,0}_T} \\
&  \lesssim \|u_0\|_{H^{s}}+\|J^s_x(u^2)\|_{L^2_{x,T}}  \\ & \lesssim \|u_0\|_{H^{s}}+\|u\|_{L^2_TL^{\infty}_x}\|u\|_{L^{\infty}_TH^s_x} \, .
\end{split}
\end{equation}
\end{proof}

\subsection{Refined Strichartz estimate}
Let us first recall the following Strichartz estimate:
\begin{equation} \label{strichartz} 
\|P_{\ge 1}D_x^{(\alpha-1)/4}U_\alpha(t)u_0\|_{L^4_tL^\infty_x} \lesssim \|u_0\|_{L^2},\quad u_0\in L^2(\R),
\end{equation}
where $U_\alpha(t)=e^{tL_{\alpha+1}}$ is the free evolution operator associated to \eqref{dpB}. This estimate is a direct consequence of Theorem 2.1 in \cite{KPV2} applied with $\phi = (1-\eta)\omega_{\alpha+1}$. From this we get following the proof of Proposition 2.3 in \cite{LiPiSa1} (see also \cite{KK}) the refined Strichartz estimate:
\begin{lemma}\label{se}
Let $0<\alpha \le 1$.  Assume that $0<T\le 1$ and $\delta\ge 0$. Let $u$ be a  solution to
  \begin{equation}\label{se.0}
  (\partial_t+L_{\alpha+1})u = F
  \end{equation}
  defined on the time interval $[0,T]$. Then, there exist  $ 0<\kappa_1, \; \kappa_2<\frac 1 2 $ such that   \begin{equation}\label{se.1}
    \|P_Nu\|_{L^2_TL^{\infty}_x} \lesssim T^{\kappa_1} \|D_x^{-(\alpha-1)/4+\delta/4}P_Nu\|_{L^{\infty}_TL^2_x}
+T^{\kappa_2} \|D_x^{-(\alpha-1)/4-3\delta/4}P_NF\|_{L^2_{T,x}}
  \end{equation}
  and 
   \begin{equation}\label{se.11}
    \|P_Nu\|_{L^2_TL^{\infty}_x} \lesssim T^{\kappa_1} \|D_x^{-(\alpha-1)/4+\delta/4}P_Nu\|_{L^{\infty}_TL^2_x}
+T^{\kappa_2} \|D_x^{-(\alpha-1)/2-\delta/2}P_NF\|_{L^2_T L^1_x},
  \end{equation}
for any dyadic number $N\ge 1$.
\end{lemma}
\begin{proof}
\eqref{se.1} is proven in  [\cite{LiPiSa1}, Proposition 2.3] (see also \cite{KK}). To prove \eqref{se.11} we   modified slightly the procedure (see \cite{MoPiVe1} for a similar modification). Let $ N\ge 1 $ and let $ I=[a,b]\subset \R $ be an interval of length 
 $ |I|\lesssim T^\kappa N^{-\delta} $ for some  fixed $ \delta>0 $ and $ 0<\kappa<1$. From \eqref{strichartz} and H\"older's inequalities, we easily get 
 \begin{equation}\label{ret1}
 \|U_\alpha(\cdot) P_N u_0 \|_{L^p_{I} L^\infty_x} \lesssim N^{\frac{1-\alpha}{4}} (T^\kappa N^{-\delta})^{(\frac{1}{p}-\frac{1}{4})} \|u_0\|_{L^2} \; , 
 \end{equation}
  for any $ 2\le p\le 4 $ and $ u_0\in L^2(\R) $. By the $T T^* $ method and P. Tomas argument, this leads to 
 $$
 \Bigl\| \int_{I} U_\alpha(\cdot-t') P_N f(t') dt' \|_{L^p_{I} L^\infty_x} \lesssim N^{\frac{1-\alpha}{2}} (T^\kappa N^{-\delta})^{(\frac{1}{p}+\frac{1}{p'}-\frac{1}{2})} \|f\|_{L^{\overline{p'}}_{I} L^1_x}\; , 
 $$
  with $ \overline{p'}=\frac{p'}{p'-1} $, for any $ 2\le p,p' \le 4 $ and any $ f\in L^{\overline{p'}}_{I} L^1_x $. We need this estimate but on the retarded Duhamel operator 
  $(t,x) \mapsto  \int_a^t U(t-t') P_N f(t',x) dt' $.  Taking $ p=2 $ and $p'>2 $, this can be done by applying Christ-Kiselev Lemma (see \cite{CK} and also \cite{SS}). We then get 
    $$
 \Bigl\| \int_a^t  U_\alpha(t-t') P_N f(t') dt' \Bigr\|_{L^2_{I} L^\infty_x} \lesssim N^{\frac{1-\alpha}{2}} (T^\kappa N^{-\delta})^{\frac{1}{p'}} \|f\|_{L^{\overline{p'}}_{I} L^1_x}\; , 
 $$
 and H\"older inequalities then yields 
 \begin{equation}\label{ret2}
 \Bigl\| \int_a^t  U_\alpha (t-t') P_N f(t') dt' \Bigr\|_{L^2_{I} L^\infty_x} \lesssim N^{\frac{1-\alpha}{2}} T^\frac{\kappa}{2} N^{-\frac{\delta}{2}} \|f\|_{L^2_{I} L^1_x}\; .
 \end{equation}
 Now,  chopping out the interval $ [0,T] $ in small intervals of length $ T^\kappa N^{-\delta} $, we have $ [0,T]=\cup_{j\in J} I_j $ where $ I_j=[a_j,b_j] $, $ |I_j|\sim 
  T^\kappa N^{-\delta} $ and $ \#J\sim T^{1-\kappa} N^\delta $. Since $ u_N $ satisfies $ \partial_t u_N -L_{\alpha+1} u_N=F_N $ on each interval $ I_j $ we have 
  \begin{align*}
  \|u_N\|_{L^2_T L^\infty_x}& = \Bigl( \sum_{j\in J} \|u_N\|_{L^2_{I_j} L^\infty_x}^2\Bigr)^{\frac12} \\ 
  & \lesssim \Bigl( \sum_{j\in J} \| U_{\alpha}(t-a_j) u_N(a_j) \|_{L^2_{I_j} L^\infty_x}^2 + 
 \sum_{j\in J}  \Bigl\|\int_{a_j}^t U(t-t') F(t') dt' \Bigr\|_{L^2_{I_j} L^\infty_x}^2 \Bigr)^{\frac12} 
    \end{align*} 
  and \eqref{ret1}-\eqref{ret2} yield 
   \begin{align*}
  \|u_N\|_{L^2_T L^\infty_x}& \lesssim    N^{\frac{1-\alpha}{4}} (T^\kappa N^{-\delta})^{\frac14}  \Bigl( \sum_{j\in J} \| u_N \|_{L^\infty_{T} L^2_x}^2\Bigr)^{\frac12}\\ & \quad + 
 N^{\frac{1-\alpha}{2}} T^\frac{\kappa}{2} N^{-\frac{\delta}{2}} \Bigl( \sum_{j\in J} \int_{I_j} \|F_N(t,\cdot)\|_{L^1_x}^2 \, dt\Bigr)^{\frac12} \\
 & \lesssim T^{\frac{1}{2}-\frac{\kappa}{4}} N^{\frac{1-\alpha+\delta}{4}} \|u_N \|_{L^\infty_T L^2_x} +T^\frac{\kappa}{2} N^\frac{1-\alpha-\delta}{2} \|F_N\|_{L^2_T L^1_x} \, ,
    \end{align*} 
  which leads to \eqref{se.11} by Bernstein inequalities.
  \end{proof}
\begin{proposition}\label{propse}
Let $0<\alpha \le 1$. Assume that $0<T\le 1$ and $s>s_\alpha$. Let $u$ be a smooth solution to \eqref{dpB} defined on the time interval $[0,T]$. There exists  $ 0< \kappa<\frac 1 2 $ such that if $0< T\ll \|u\|_{L^\infty_{T} H^s_x}^{-\frac{1}{\kappa}}$, then
\begin{equation}\label{se.2}
\|J_x^{(s-s_\alpha)+(1-\alpha)_-} u\|_{L^2_T L^\infty_x}  \le  2  T^{\kappa}\|u\|_{L^\infty_T H^s_x} \le 1 \; .
\end{equation}
\end{proposition}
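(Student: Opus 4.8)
The plan is to run a bootstrap/continuity argument based on the refined Strichartz estimate of Lemma~\ref{se}. First I would dyadically decompose $J_x^{(s-s_\alpha)+(1-\alpha)_-}u$ into Littlewood-Paley pieces $P_Nu$. For the low-frequency part $N\lesssim 1$ I would simply use Bernstein and the embedding $L^2_x\hookrightarrow L^\infty_x$ at the cost of absolute constants, together with $T\le 1$, so this part is bounded by $CT^{\kappa}\|u\|_{L^\infty_TH^s_x}$ for any $\kappa<1/2$ (indeed by $CT^{1/2}\|u\|_{L^\infty_TL^2_x}$). The real work is in the high-frequency regime $N\ge 1$: there I apply Lemma~\ref{se} with $F=\partial_x(u^2)$ and a suitably small parameter $\delta>0$. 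The first term on the right-hand side of \eqref{se.1} contributes, after multiplying by $N^{(s-s_\alpha)+(1-\alpha)_-}$ and summing, something like $T^{\kappa_1}\|u\|_{L^\infty_TH^{s'}_x}$ with $s' = (s-s_\alpha)+(1-\alpha)_- - (\alpha-1)/4 + \delta/4$; the point of the definition $s_\alpha = \frac32-\frac{5\alpha}4$ together with the loss $(1-\alpha)_-$ is precisely that this exponent is $<s$ when $\delta$ is chosen small enough, so this term is controlled by $T^{\kappa_1}\|u\|_{L^\infty_TH^s_x}$ (with a small room to spare for the summation of the dyadic series in $N$).

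The term with $F=\partial_x(u^2)$ is the main obstacle. Here I would put $D_x^{-(\alpha-1)/4-3\delta/4}P_N\partial_x(u^2)$ in $L^2_{T,x}$, bound the derivative of order $1-(\alpha-1)/4-3\delta/4$ crudely by $N^{1+(1-\alpha)/4-3\delta/4}$ (up to harmless factors), and then estimate $\|P_N(u^2)\|_{L^2_{T,x}}$ via a paraproduct decomposition. For the high-low interactions I would use $\|P_N(u_{\ll N}u_{\sim N})\|_{L^2_{T,x}}\lesssim \|u_{\ll N}\|_{L^2_TL^\infty_x}\|u_{\sim N}\|_{L^\infty_TL^2_x}$, and for high-high interactions $\|P_N(u_{\gtrsim N}u_{\gtrsim N})\|_{L^2_{T,x}}\lesssim \sum_{K\gtrsim N}\|u_K\|_{L^2_TL^\infty_x}\|u_{\sim K}\|_{L^\infty_TL^2_x}$. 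After multiplying by the weights $N^{(s-s_\alpha)+(1-\alpha)_-}$ and summing over $N$, this produces a term of the shape $T^{\kappa_2}\,\bigl\||J_x^{(s-s_\alpha)+(1-\alpha)_-}u\|_{L^2_TL^\infty_x}\bigr)\,\|u\|_{L^\infty_TH^s_x}$, i.e.\ it is \emph{quadratic} in the quantity we are trying to bound (this is exactly why we need the smallness of $T$ and the bootstrap). The bookkeeping that the gain $N^{-(s-s_\alpha)}$ from the weight mismatch, against the loss $N^{1+(1-\alpha)/4}$ and the $H^s$-norm weight, leaves a convergent series is the delicate point, and is where the precise numerology $s>s_\alpha$ enters; I would carry the $\delta/4$ and $3\delta/4$ powers along and choose $\delta$ small at the end.

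Finally I would close the argument by a continuity/bootstrap in $T$. Set $\kappa=\min(\kappa_1,\kappa_2)$ and let $X(T) := \|J_x^{(s-s_\alpha)+(1-\alpha)_-}u\|_{L^2_TL^\infty_x}$, which is continuous and nondecreasing in $T$ with $X(0)=0$. The estimates above give, for $0<T\le 1$,
\[
X(T)\le C_0 T^{\kappa}\|u\|_{L^\infty_TH^s_x} + C_1 T^{\kappa}\|u\|_{L^\infty_TH^s_x}\,X(T).
\]
If $0<T\ll \|u\|_{L^\infty_TH^s_x}^{-1/\kappa}$ is small enough that $C_1T^{\kappa}\|u\|_{L^\infty_TH^s_x}\le \frac12$, then a standard continuity argument (comparing with the barrier $2C_0T^{\kappa}\|u\|_{L^\infty_TH^s_x}$) yields $X(T)\le 2C_0T^{\kappa}\|u\|_{L^\infty_TH^s_x}$; absorbing $C_0$ into $\kappa$ and further shrinking $T$ so that $2T^{\kappa}\|u\|_{L^\infty_TH^s_x}\le 1$ gives \eqref{se.2}. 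I expect the paraproduct/numerology step for the $\partial_x(u^2)$ term to be the crux; everything else is routine Littlewood-Paley bookkeeping.
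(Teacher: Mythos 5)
Your overall architecture (low/high frequency splitting, Lemma~\ref{se} with $F=\partial_x(u^2)$, a product estimate for the nonlinear term, absorption of the quadratic term by smallness of $T$) is the same as the paper's, but your choice of the parameter $\delta$ is wrong, and this breaks the key step. Count the derivatives in the second term of \eqref{se.1}: after multiplying by $N^{(s-s_\alpha)+(1-\alpha)_-}$ and bounding $D_x^{-(\alpha-1)/4-3\delta/4}P_N\partial_x$ by $N^{1+(1-\alpha)/4-3\delta/4}$, the total weight on $P_N(u^2)$ is
\begin{equation*}
N^{(s-\frac12+\frac\alpha4)_- \, + \, 1+\frac{1-\alpha}4-\frac{3\delta}4} \, = \, N^{(s+\frac34-\frac{3\delta}4)_-}.
\end{equation*}
In the high--low interaction $P_N(u_{\ll N}u_{\sim N})$ the only decay in $N$ available is $N^{-s}$ from placing $u_{\sim N}$ in $L^\infty_TH^s_x$ (the $L^2_TL^\infty_x$ norm of $u_{\ll N}$, even with its weight $J_x^{(s-s_\alpha)+(1-\alpha)_-}$, gives no negative power of $N$ because that factor lives at frequencies $\ll N$). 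So each dyadic block contributes $N^{\frac34-\frac{3\delta}4-0_-}X(T)\|u\|_{L^\infty_TH^s_x}$, and with $\delta$ small the sum over $N\ge1$ diverges: the estimate is \emph{not} of the shape $T^{\kappa_2}X(T)\|u\|_{L^\infty_TH^s_x}$ that you need for the bootstrap. One must take $\delta$ essentially equal to $1$ (so that the gain $\frac{3\delta}4$ exactly compensates the $\frac34$ excess coming from the full derivative in $\partial_x(u^2)$); this is precisely what the paper does, after which the nonlinear term is $\lesssim T^{\kappa_2}\|J^s_x(u^2)\|_{L^2_{T,x}}\lesssim T^{\kappa_2}\|u\|_{L^2_TL^\infty_x}\|u\|_{L^\infty_TH^s_x}$ by the fractional Leibniz rule, and the $0_-$ from $(1-\alpha)_-$ provides the summability in $N$.

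Your stated reason for taking $\delta$ small --- that the linear term must land strictly below $H^s$ --- is also a misreading of the numerology: with $\delta=1$ that term carries total weight $N^{s-0_-}$, i.e.\ exactly $H^s$ up to the $0_-$ margin already built into $(1-\alpha)_-$, which is enough to sum the dyadic series. With the choice $\delta=1$ the rest of your argument (your paraproduct bounds are just the fractional Leibniz rule, and your continuity argument in $T$ is the absorption the paper performs, legitimate here since $u$ is smooth so $X(T)$ is finite) coincides with the paper's proof.
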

\begin{proof}
From Bernstein's inequality, we easily estimate the low frequencies part:
$$
\|P_{\le 1}J_x^{(s-s_\alpha)+(1-\alpha)_-}u\|_{L^2_TL^\infty_x}\lesssim T^{\frac12}\|u\|_{L^\infty_TL^2_x}.
$$
Taking $\delta=1$ in \eqref{se.1}, summing over $N \ge 1$ and using the fractional Leibniz rule, we deduce
\begin{align*}
  \|P_{>1}J_x^{(s-s_\alpha)+(1-\alpha)_-} u\|_{L^2_T L^\infty_x} &\lesssim T^{\kappa_1}\|J_x^su\|_{L^\infty_TL^2_x} + T^{\kappa_2}\|J_x^s(u^2)\|_{L^2_{T,x}}\\
  &\lesssim  T^{\kappa_1} \|u\|_{L^\infty_TH^s_x} + T^{\kappa_2}\|u\|_{L^2_TL^\infty_x} \|u\|_{L^\infty_TH^s_x}.
\end{align*}
Noticing that for $ s>s_\alpha $ and $ 0<\alpha\le 1 $, it holds $(s-s_\alpha)+(1-\alpha)_-\ge 0 $, we obtain \eqref{se.2} by combining the two above estimates and taking $ \kappa=\kappa_1\vee \kappa_2$.
\end{proof}
\begin{corollary}\label{coro1}
Let $0<\alpha \le 1$. Assume that $0<T\le 1$ and $s>s_\alpha$. Let $u$ be a smooth solution to \eqref{dpB} defined on the time interval $[0,T]$. There exist  $ 0< \kappa<\frac 1 2 $ and $ C_0>1 $ such that if $0< T\ll \|u\|_{L^\infty_{T} H^s_x}^{-\frac{1}{\kappa}}$, then
\begin{equation}\label{co1}
\|u\|_{Y^s_T}   \le C_0   \|u\|_{L^\infty_T H^s_x} \; .
\end{equation}
\end{corollary}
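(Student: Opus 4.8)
The plan is to combine the three ingredients already established in this section —- the bilinear estimate \eqref{be.0}, the refined Strichartz bound \eqref{se.2}, and the trivial embedding $Y^s_T \hookrightarrow Z^s_T$ —- with a standard energy inequality for \eqref{dpB} that controls $\|u\|_{L^\infty_T H^s_x}$ in terms of the data and the nonlinear term. By definition of the $Y^s$-norm,
\begin{equation*}
\|u\|_{Y^s_T} = \|u\|_{L^\infty_T H^s_x} + \|u\|_{X^{s-1,1}_T} + \|J_x^{(s-s_\alpha)+(1-\alpha)_-}u\|_{L^2_TL^\infty_x},
\end{equation*}
so it suffices to bound each of the three pieces by a constant multiple of $\|u\|_{L^\infty_TH^s_x}$, under the smallness assumption $0<T\ll\|u\|_{L^\infty_TH^s_x}^{-1/\kappa}$.

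First I would handle the last term directly by Proposition \ref{propse}: taking the same $\kappa$ as there, the hypothesis $0<T\ll\|u\|_{L^\infty_TH^s_x}^{-1/\kappa}$ gives immediately
\begin{equation*}
\|J_x^{(s-s_\alpha)+(1-\alpha)_-}u\|_{L^2_TL^\infty_x} \le 2T^\kappa \|u\|_{L^\infty_TH^s_x} \le 2\|u\|_{L^\infty_TH^s_x},
\end{equation*}
since $T^\kappa\le 1$. Next, for the Bourgain-norm piece, I would feed this bound into \eqref{be.0}:
\begin{equation*}
\|u\|_{X^{s-1,1}_T} \lesssim \|u_0\|_{H^s} + \|u\|_{L^2_TL^\infty_x}\|u\|_{L^\infty_TH^s_x} \lesssim \|u_0\|_{H^s} + \|u\|_{L^\infty_TH^s_x}^2 \cdot T^\kappa,
\end{equation*}
where I used $\|u\|_{L^2_TL^\infty_x}\le\|J_x^{(s-s_\alpha)+(1-\alpha)_-}u\|_{L^2_TL^\infty_x}$ (the exponent being nonnegative for $s>s_\alpha$, $0<\alpha\le1$, so Bernstein applies) together with the previous display; since $\|u_0\|_{H^s}\le\|u\|_{L^\infty_TH^s_x}$ and $T^\kappa\|u\|_{L^\infty_TH^s_x}\lesssim 1$ by the smallness hypothesis, this is $\lesssim\|u\|_{L^\infty_TH^s_x}$.

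The one remaining ingredient is a bound $\|u\|_{L^\infty_TH^s_x}\le\|u\|_{L^\infty_TH^s_x}$, which is of course trivial, so in fact the $L^\infty_TH^s_x$ piece needs no argument at all: it is literally a summand of $\|u\|_{Y^s_T}$ bounded by itself. Collecting the three bounds, $\|u\|_{Y^s_T}\le(1 + C + 2)\|u\|_{L^\infty_TH^s_x}$ for some absolute constant $C$ coming from \eqref{be.0}, and one sets $C_0 = C+3 > 1$. The only point requiring a touch of care -— and the place where the precise form of the smallness condition $T\ll\|u\|_{L^\infty_TH^s_x}^{-1/\kappa}$ is used —- is to absorb the quadratic term $T^\kappa\|u\|_{L^\infty_TH^s_x}^2$ arising from the nonlinearity in \eqref{be.0} into a linear bound; since there is no genuine obstruction here, I expect the proof to be short, essentially a two-line assembly of the preceding two results with the embedding $Y^s_T\hookrightarrow Z^s_T$ only needed implicitly (it is used elsewhere, not in this corollary). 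I note that one should also double-check that the $\kappa$ produced by Proposition \ref{propse} is the same $\kappa$ one wants in the statement, which it is by construction $\kappa=\kappa_1\vee\kappa_2$.
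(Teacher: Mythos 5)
There is a genuine gap, and it is precisely where you write that, ``by definition of the $Y^s$-norm'', $\|u\|_{Y^s_T}$ equals the sum of the three restricted component norms. It does not: $Y^s$ is a space of functions on $\R\times\R$ (all the norms in \eqref{Y} are taken over $t\in\R$), and $\|u\|_{Y^s_T}$ is by definition an \emph{infimum over extensions} $\widetilde u$ of the full norm $\|\widetilde u\|_{Y^s}$. One always has $\|u\|_{Y^s_T}\gtrsim \|u\|_{L^\infty_TH^s}+\|u\|_{X^{s-1,1}_T}+\|J_x^{(s-s_\alpha)+(1-\alpha)_-}u\|_{L^2_TL^\infty_x}$, but the inequality you need is the reverse one, and it requires producing a \emph{single} extension of $u$ that is simultaneously controlled in all three components. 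This is the actual content of the corollary in the paper: one introduces the extension operator $\rho_T$ of \eqref{defrho}, $\rho_T(u)(t)=U_\alpha(t)\eta(t)U_\alpha(-\mu_T(t))u(\mu_T(t))$, and checks \eqref{tg1}, \eqref{tg2}, \eqref{tg3}, i.e.\ that its $L^\infty_tH^s_x$, $X^{s-1,1}$ and $L^2_tL^\infty_x$ norms are bounded by the corresponding restricted norms plus $\|u(0)\|_{H^s}+\|u(T)\|_{H^s}$, uniformly in $T$. The $L^2_tL^\infty_x$ component is the delicate one: for the free-evolution tails on $]-\infty,0[$ and $]T,\infty[$ one must invoke the Strichartz estimate \eqref{strichartz}, which costs $\frac{1-\alpha}{4}$ derivatives, and verify the numerology $(s-s_\alpha)+(1-\alpha)+\frac{1-\alpha}{4}\le s-\frac14\le s$ so that these tails are still controlled by $\|u\|_{L^\infty_TH^s}$. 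None of this appears in your argument, and without it the bounds you obtain on the restricted pieces do not bound $\|u\|_{Y^s_T}$.

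The remaining assembly in your proposal is essentially the paper's last line and is fine: \eqref{se.2} handles the Strichartz component (with the same $\kappa$), $\|u\|_{L^2_TL^\infty_x}\lesssim\|J_x^{(s-s_\alpha)+(1-\alpha)_-}u\|_{L^2_TL^\infty_x}$ is legitimate since the exponent is nonnegative, and the smallness hypothesis $T\ll\|u\|_{L^\infty_TH^s_x}^{-1/\kappa}$ absorbs the quadratic term in \eqref{be.0}. But to make the proof complete you must insert the extension step (construction of $\rho_T$ and the estimates \eqref{tg1}--\eqref{tg3}), which is the bulk of the work.
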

\begin{proof}
We have to extend the function $ u$ from $ ]0,T[ $ to $ \R $. For this we introduce the extension operator $ \rho_T $ defined by
\begin{equation}\label{defrho}
\rho_T(u)(t):= U_\alpha(t)\eta(t) U_\alpha(-\mu_T(t)) u(\mu_T(t))\; ,
\end{equation}
where $ \eta $ is the smooth cut-off function defined in Section \ref{notation} and $\mu_T $ is the  continuous piecewise affine
 function defined  by
$$
 \mu_T(t)=\left\{\begin{array}{rcl}
 0  &\text{for } &  t<0 \\
 t  &\text {for }& t\in [0,T] \\
  T & \text {for } & t>T
 \end{array}
 \right. .
 $$
 According to classical results on extension operators (see for instance \cite{LM}),  for any $ 1/2<b\le 1$,  $f \mapsto \eta f(\mu_T(\cdot)) $ is  linear continuous    from $ H^b([0,T]) $ into $ H^b(\R) $  with a bound that does not depend on $ T>0 $.\\
First, the unitarity of the free group $ U_\alpha(\cdot) $ in $ H^s(\R) $ easily leads to
\begin{equation}\label{tg1}
\|\rho_T(u)\|_{L^\infty_t H^s_x} \lesssim \|u(\mu_T(\cdot))\|_{L^\infty_t H^s_x}\lesssim \|u\|_{L^\infty_T  H^s_x} + \|u(0)\|_{H^s}+ \|u(T)\|_{H^s} \; .
\end{equation}
Second, the definition of the $ X^{\theta,b}$-norm leads, for $1/2<b\le 1 $ and   $\theta\in \R $, to
\begin{equation}\label{tg2}
\|\rho_T(u)\|_{X^{\theta,b}}  = \|\eta\, U_\alpha(-\mu_T(\cdot)) u(\mu_T(\cdot))\|_{H^{{\theta},b}_{x,t} }
 \lesssim  \|U_\alpha(-\cdot) u\|_{H^b([0,T[; H^{\theta})}
 \lesssim  \|u\|_{X^{\theta,b}_T} \; .
\end{equation}
Finally,  for $ \theta \in \R $,
\begin{align*}
\|J^\theta_x \rho_T(u)\|_{L^2_t L^\infty_x} & \lesssim  \|\eta U_\alpha(-\cdot)J^\theta_x   u(0)\|_{L^2(]-\infty,0[; L^\infty_x)} + \|J^\theta_x u \|_{L^2_T L^\infty_x} 
 \\ 
 & + \|\eta U_\alpha(-\cdot)J^\theta_x  U_\alpha(T) u(T)\|_{L^2(]T,+\infty[; L^\infty_x)}
\end{align*}
whereas \eqref{strichartz} leads to 
\begin{align*}
  \|\eta U_\alpha(-\cdot)J^\theta_x   u(0)\|_{L^2(]-\infty,0[; L^\infty_x)} & \lesssim \| P_{\le 1} \eta U_\alpha(-\cdot)J^\theta_x   u(0)\|_{L^2_t H^1_x} 
 + \|P_{>1}   U_\alpha(-\cdot)J^\theta_x   u(0)\|_{L^4_t L^\infty_x}\\
 & \lesssim \|  u(0)\|_{L^2_x} + \|J^{\theta+\frac{1-\alpha}{4}}_x   u(0)\|_{L^2_x}\lesssim \|u(0)\|_{H^{\theta+\frac{1-\alpha}{4}}}
 \end{align*}
 and in the same way
 \begin{align*}
  \|\eta U_\alpha(-\cdot) U(T)J^\theta_x   u(T)\|_{L^2(]T,+\infty[; L^\infty_x)} 
 & \lesssim \|U(T)u(T)\|_{H^{\theta+\frac{1-\alpha}{4}}}=\|u(T)\|_{H^{\theta+\frac{1-\alpha}{4}}}\; .
 \end{align*}
Noticing that, for $ 0<\alpha\le 1 $, $ s-s_\alpha +1-\alpha=s-\frac12 +\frac\alpha 4 \le s-\frac14$, this ensures that 
\begin{eqnarray}
\|J_x^{(s-s_\alpha)+(1-\alpha)-\varepsilon} \rho_T(u)\|_{L^2_t L^\infty_x}&\lesssim &\|J^{(s-s_\alpha)+(1-\alpha)-\varepsilon} u \|_{L^2_T L^\infty_x} \nonumber\\
& & +\|u(0)\|_{H^{s-\varepsilon}}+\|u(T)\|_{H^{s-\varepsilon}}\; ,
\label{tg3}
\end{eqnarray}
for any $ \varepsilon>0 $.\\
Gathering \eqref{tg1}-\eqref{tg3}, we thus  infer that for any $ (T,s)\in \R_+^*\times \R $, $ \rho_T $ is a bounded linear operator from
 $ C([0,T];H^s(\R))  \cap X^{s-1,1}_T \cap  L^2_T W^{(s-s_\alpha)+(1-\alpha)_-,\infty}_x$ into $ Y^s $   with a bound that does not depend on $ (T,s)$.
 Therefore \eqref{be.0} and \eqref{se.2} lead to \eqref{co1}.
\end{proof}
\subsection{Energy estimate}\label{section-ee}
Applying the operator $ P_N $ with $ N>0  $ dyadic  to equation \eqref{dpB}, taking the $L^2$ scalar product with $ P_N u $ and integrating
 on $ ]0,t[ $  we obtain
\begin{equation}\cro{N}^{2s}
\|P_N u(\cdot,t)\|^2_{L^2_x} = \|P_N u_0\|_{H^s}^2 +  \langle N\rangle^{2s} \int_{\R_t}  P_N\partial_x(u^2) P_N u
\end{equation}
Let $N_0\ge 2^9$ and $N>N_0$. Define $\mathcal{J}_N$ by
\begin{equation}\label{JN}
\mathcal{J}_N = \cro{N}^{2s}\int_{\R_t} P_N\partial_x(u^2) P_N u.
\end{equation}
By localization considerations, we get
$$
P_N(u^2) = 2P_N(u_{\ll N}u) + P_N(u_{\gtrsim N}u_{\gtrsim N}).
$$
Moreover, from the fundamental theorem of calculus, we easily get
$$
P_N(u_{\ll N}u) = u_{\ll N} u_N +  N^{-1}\Pi_\chi^2(\partial_xu_{\ll N}, u),
$$
where we used the bilinear Fourier multiplier notation introduced in Definition \ref{defpseudo} with $$\chi(\xi_1,\xi_2)=-i\int_0^1\phi'(N^{-1}(\theta\xi_1+\xi_2)) d\theta \, .$$
Inserting this into (\ref{JN}) and integrating by parts we deduce $\mathcal{J}_N=\mathcal{J}_N^1+\mathcal{J}_N^2$ where
$$
  \mathcal{J}_N^1 = \cro{N}^{2s}\int_{\R_t} \left( \partial_x u_{\ll N} P_Nu +2N^{-1} \partial_x\Pi_\chi^2(\partial_x u_{\ll N}, u) \right) P_Nu
$$
and
\begin{equation} \label{defJN2}
\mathcal{J}_N^2 = -\cro{N}^{2s}\sum_{N_1\gtrsim N} \int_{\R_t} P_{N_1}uP_{\sim N_1}u P_N^2\partial_x u,
\end{equation}
Since $P_NP_{\sim N}=P_N$, we may rewrite $J_N^1$ more symmetrically as
\begin{align}
  \mathcal{J}_N^1 &= \cro{N}^{2s}\int_{\R_t} P_N\left( \partial_x u_{\ll N} P_NP_{\sim N} u +2N^{-1} \partial_x\Pi_\chi^2(\partial_x u_{\ll N}, P_{\sim N}^2u) \right) P_{\sim N}u \nonumber \\
  &= N^{2s} \int_{\R_t} \Pi^2_{\chi_1}(\partial_x u_{\ll N}, P_{\sim N} u) P_{\sim N} u \label{defJN1}
\end{align}
with
\begin{equation}\label{chi1}
\chi_1(\xi_1,\xi_2) = \Bigl( \frac{\cro{N}}{N}\Bigr)^{2s}\left(\phi_N(\xi_2) + 2i\frac{\xi_1+\xi_2}N \chi(\xi_1,\xi_2) \phi_{\sim N}(\xi_2)\right) \phi_N(\xi_1+\xi_2).
\end{equation}

Note that the function $\chi_1$ satisfies the condition \eqref{pseudoprod.1}. This decomposition of $\mathcal{J}_N$ motivates the definition of our modified energy. For  $ N_0>1$, $u\in H^s(\R) $, with $ s\in\R $, and $N>0$ dyadic we define
\begin{equation} \label{defEN}
  \mathcal{E}_N(u)=\mathcal{E}_N(u,N_0)= \left\{ \begin{array}{ll} \frac 12 \|P_Nu\|_{L^2_x}^2 & \text{for} \ N \le N_0 \, \\
\frac 12 \|P_Nu\|_{L^2_x}^2 + c \mathcal{E}_N^{1}(u)  & \text{for} \ N> N_0 \, , \end{array}\right.
\end{equation}
where
$$
\mathcal{E}_N^1(u) = \int_{\R^2}\frac{\chi_1(\xi_1,\xi_2)}{\Omega_2(\xi_1,\xi_2)} \xi_1 \widehat{u_{\ll N}}(\xi_1) \widehat{P_{\sim N}u}(\xi_2)\widehat{P_{\sim N}u}(-\xi_1-\xi_2) d\xi_1d\xi_2 \, ,
$$
$\Omega_2(\xi_1,\xi_2)$ is the quadratic resonance relation defined in \eqref{Omega2}, and $c$ is a real constant to be fixed later.

We define the  modified energy at the $ H^s$-regularity  by using a nonhomogeneous dyadic decomposition in spatial frequency\footnote{This means that when summing over $N$, we keep all the low frequencies together and by convention $P_1=P_{\le 1}$.}
\begin{equation}\label{def-EsT}
E^s(u) = E^s(u,N_0) =\sum_{N \ge 1} \cro{N}^{2s}  \big|\mathcal{E}_N(u,N_0)\big| \, .
\end{equation}

Next, we show that if $s >s_\alpha$ and $ N_0>2^0 $ is large enough then the modified energy $E^s(u)$ is equivalent to the $ H^s$-norm of $ u$.
\begin{lemma}[Coercivity of the modified energy]\label{lem-EsT}
Let $0<\alpha \le 1$ and let $u\in H^s(\R) $ with  $s > s_\alpha$. Then  for any $ N_0\gg (1+\|u\|_{H^{s_\alpha}_x})^{\frac 2 \alpha}  $, it holds
\begin{equation} \label{lem-Est.1}
\Bigl|E^s(u,N_0)-\frac{1}{2}\sum_{N \ge 1} \cro{N}^{2s} \|P_N u \|_{L^2_x}^2  \Bigr| \le \frac{1}{8}\sum_{N>N_0} \cro{N}^{2s} \|P_N u \|_{L^2_x}^2  \; .
\end{equation}
\end{lemma}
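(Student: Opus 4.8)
The plan is to prove that the cubic correction $\mathcal{E}_N^1(u)$ is a lower-order perturbation of the quadratic part $\frac12\|P_N u\|_{L^2_x}^2$, uniformly in $N > N_0$, once $N_0$ is taken large depending on $\|u\|_{H^{s_\alpha}}$. Concretely, it suffices to show that
\[
\sum_{N>N_0}\cro{N}^{2s}|\mathcal{E}_N^1(u)| \le \frac{1}{8c}\sum_{N>N_0}\cro{N}^{2s}\|P_N u\|_{L^2_x}^2 ,
\]
after which the triangle inequality together with the definitions \eqref{defEN}--\eqref{def-EsT} yields \eqref{lem-Est.1} (the terms $N\le N_0$ contribute exactly the quadratic part and cancel out of the difference). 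So the whole lemma reduces to a single multilinear estimate on $\mathcal{E}_N^1$.

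First I would rewrite $\mathcal{E}_N^1(u)$ in physical space as a $G^2$-type expression: since $\widehat{u_{\ll N}}(\xi_1)$ forces $|\xi_1|\ll N$ and $\widehat{P_{\sim N}u}$ forces $|\xi_2|\sim |\xi_1+\xi_2|\sim N$, the symbol $\chi_1(\xi_1,\xi_2)/\Omega_2(\xi_1,\xi_2)$ times $\xi_1$ is, up to the harmless factor $(\cro{N}/N)^{2s}\sim 1$, of the form $N^{-1}N^{-\alpha}\cdot(\text{a symbol obeying the Marcinkiewicz condition \eqref{pseudoprod.1}})$: indeed $\chi_1$ obeys \eqref{pseudoprod.1} as remarked after \eqref{chi1}, $\xi_1/\Omega_2$ is controlled by Lemma \ref{omega.inv} (which says $N_1 N_2^\alpha/\Omega_2$ is a Marcinkiewicz symbol, here with $N_1\sim|\xi_1|$, $N_2\sim N$) so that $\xi_1/\Omega_2 = (\xi_1/N_1)\cdot(N_1 N_2^\alpha/\Omega_2)\cdot N_2^{-\alpha}$, and products of Marcinkiewicz symbols are Marcinkiewicz by Remark \ref{symbolprod}. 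Thus, decomposing $u_{\ll N}=\sum_{N_1\ll N}u_{N_1}$,
\[
|\mathcal{E}_N^1(u)| \lesssim N^{-\alpha}\sum_{N_1\ll N}\bigl|G^2_{0,\tilde\chi}(u_{N_1},P_{\sim N}u,P_{\sim N}u)\bigr|
\]
for Marcinkiewicz symbols $\tilde\chi$, where here the time interval is a single slice but a trivial $L^\infty_x L^2_x \times L^2_x \times L^2_x$ Hölder/Bernstein bound (as in \eqref{L2bilin.00}, \eqref{L2bilin.3}, but with no time integration) gives $|G^2| \lesssim N_1^{1/2}\|u_{N_1}\|_{L^2_x}\|P_{\sim N}u\|_{L^2_x}^2$. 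Hence $|\mathcal{E}_N^1(u)| \lesssim N^{-\alpha}\sum_{N_1\ll N}N_1^{1/2}\|u_{N_1}\|_{L^2_x}\|P_{\sim N}u\|_{L^2_x}^2$, and since $\sum_{N_1}N_1^{1/2}\|u_{N_1}\|_{L^2_x}\lesssim \|u\|_{H^{s_\alpha}}$ when $s_\alpha=\frac32-\frac{5\alpha}4 > \frac12$, wait — this fails for small $\alpha$; instead I would keep $N_1^{1/2}\|u_{N_1}\|_{L^2_x} = N_1^{1/2-s_\alpha}\cdot N_1^{s_\alpha}\|u_{N_1}\|_{L^2_x}$ and use Cauchy–Schwarz in the dyadic variable $N_1$: $\sum_{N_1\ll N}N_1^{1/2}\|u_{N_1}\|_{L^2_x}\lesssim (\sum_{N_1}N_1^{1-2s_\alpha})^{1/2}\|u\|_{H^{s_\alpha}}$, which converges provided $1-2s_\alpha<0$; since $s_\alpha > 1/2$ requires $\alpha < 4/5$, I should instead simply bound $\sum_{N_1\ll N}N_1^{1/2}\|u_{N_1}\|_{L^2_x}\lesssim \|u\|_{H^{1/2+}} $ is also not uniform. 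The clean route is: $\sum_{N_1\ll N}N_1^{1/2}\|u_{N_1}\|_{L^2_x}\lesssim \sum_{N_1\ll N}N_1^{1/2-s_\alpha}\cdot\|u\|_{H^{s_\alpha}}\lesssim N^{(1/2-s_\alpha)_+}\|u\|_{H^{s_\alpha}}$ when $s_\alpha<1/2$, and $\lesssim\|u\|_{H^{s_\alpha}}$ when $s_\alpha>1/2$. Feeding this back and comparing powers of $N$ I obtain $\cro{N}^{2s}|\mathcal{E}_N^1(u)| \lesssim N^{-\alpha+(1/2-s_\alpha)_+}\|u\|_{H^{s_\alpha}}\cdot\cro{N}^{2s}\|P_{\sim N}u\|_{L^2_x}^2$, and one checks $-\alpha+(\tfrac12-s_\alpha)_+ = -\alpha + (\tfrac{5\alpha}{4}-1)_+ \le 0$ for $0<\alpha\le 1$ — in fact it equals $-\alpha$ for $\alpha\le 4/5$ and $\tfrac{\alpha}{4}-1<0$ for $\alpha>4/5$ — so there is a genuine negative power of $N$ (at least $N^{-\alpha/4+}$), hence a negative power of $N_0$.

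Summing over $N>N_0$ and absorbing $\cro{N}^{2s}\|P_{\sim N}u\|_{L^2_x}^2 \lesssim \sum_{N'\sim N}\cro{N'}^{2s}\|P_{N'}u\|_{L^2_x}^2$ (finitely overlapping), I get
\[
c\sum_{N>N_0}\cro{N}^{2s}|\mathcal{E}_N^1(u)| \le C c\, N_0^{-\alpha/4+}\|u\|_{H^{s_\alpha}}\sum_{N\gtrsim N_0}\cro{N}^{2s}\|P_N u\|_{L^2_x}^2 ,
\]
and the right-hand side is $\le \frac18\sum_{N>N_0}\cro{N}^{2s}\|P_N u\|_{L^2_x}^2$ as soon as $Cc\,N_0^{-\alpha/4+}\|u\|_{H^{s_\alpha}}\le \tfrac18$, i.e. $N_0 \gg (1+\|u\|_{H^{s_\alpha}})^{2/\alpha}$ (with a slightly worse exponent than $2/\alpha$ coming from the $0_+$, which is still consistent with the hypothesis $N_0\gg(1+\|u\|_{H^{s_\alpha}})^{2/\alpha}$ up to enlarging the implicit constant; if one wants exactly $2/\alpha$ one optimizes the $\varepsilon$). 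This gives \eqref{lem-Est.1}.

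The main obstacle is the bookkeeping of the symbol: verifying that $\xi_1\chi_1(\xi_1,\xi_2)/\Omega_2(\xi_1,\xi_2)$, restricted to $\{|\xi_1|\ll N,\ |\xi_2|\sim|\xi_1+\xi_2|\sim N\}$, is $N^{-1-\alpha}$ times a Marcinkiewicz symbol uniformly in the dyadic parameters — this is exactly where Lemma \ref{omega.inv}, Remark \ref{symbolprod}, and the observation after \eqref{chi1} are combined, and it must be done carefully because $\Omega_2$ degenerates like $|\xi_1|N^\alpha$ as $\xi_1\to0$. The rest is a convergent geometric dyadic sum and power counting, where the only subtlety is tracking the case split $s_\alpha\lessgtr 1/2$ (equivalently $\alpha\lessgtr 4/5$) to make sure the exponent of $N$ is negative throughout $0<\alpha\le1$.
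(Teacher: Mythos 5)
Your proposal is correct and follows essentially the same route as the paper's proof: reduce via the triangle inequality to bounding $\sum_{N>N_0}\cro{N}^{2s}|\mathcal{E}_N^{1}(u)|$, estimate each piece by $N^{-\alpha}\sum_{N_1\ll N}N_1^{1/2}\|u_{N_1}\|_{L^2_x}\|P_{\sim N}u\|_{L^2_x}^2$ and sum in $N_1$ using the $H^{s_\alpha}$ norm, which reproduces exactly the paper's factor $(N^{-\alpha}+N^{\frac{\alpha}{4}-1})$ --- the only (cosmetic) difference being that the paper obtains the trilinear bound directly from the pointwise estimate $|\Omega_2|\sim N_1N^{\alpha}$ together with Young and Bernstein, so Lemma \ref{omega.inv} and the Coifman--Meyer machinery are not needed for this fixed-time $L^2$ estimate. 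One small caveat: keep the exponents $-\alpha$ and $\frac{\alpha}{4}-1$ you actually derived rather than the weakened $-\frac{\alpha}{4}$, since the weakened version would literally demand $N_0\gg(1+\|u\|_{H^{s_\alpha}_x})^{4/\alpha}$, whereas the sharp exponents (as in the paper) are compatible with the stated threshold $N_0\gg(1+\|u\|_{H^{s_\alpha}_x})^{2/\alpha}$.
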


\begin{proof}
We infer from (\ref{def-EsT}) and the triangle inequality that
\begin{equation} \label{lem-Est.2}
\Bigl|E^s(u,N_0)-\frac12\sum_{N \ge 1} \cro{N}^{2s} \|P_N u \|_{L^2_x}^2  \Bigr| \lesssim  \sum_{N> N_0}N^{2s}\big|\mathcal{E}_N^{1}(u)\big| \, .
\end{equation}
Thanks to Young and Bernstein's inequalities we have
\begin{equation} \label{lem-Est.3}
\begin{split}
N^{2s}\big|\mathcal{E}_N^{1}(u)\big| &\lesssim \sum_{N_1\ll N} N^{2s} (N_1N^\alpha)^{-1} N_1^{\frac12} \|\partial_xu_{N_1}\|_{L^2_x} \|P_{\sim N}u\|_{L^2_x}^2 \\ &
\lesssim (N^{-\alpha}+N^{\frac{\alpha}4-1}) \|u\|_{H^{s_\alpha}_x} \|P_{\sim N}u\|_{H^s_x}^2 \, .
\end{split}
\end{equation}
Finally, we conclude the proof of \eqref{lem-Est.1} gathering \eqref{lem-Est.2}-\eqref{lem-Est.3} and the fact that
 $  \displaystyle \sum_{N \ge 1}\|P_{\sim N} u\|_{H^s_x}^2\sim \displaystyle \sum_{N \ge 1} \cro{N}^{2s} \|P_N u \|_{L^2_x}^2 \, . $
\end{proof}

We now state the main estimate of this subsection.
\begin{proposition}\label{prop-ee}
Let $0<\alpha \le 1$.  Let $s>s'>s_\alpha$, $0<T \le 1$ and $u\in Y^s_T$ be a solution of \eqref{dpB} on $[0,T]$. Then for any $ N_0\gg 1 $ we have
\begin{equation} \label{prop-ee.1}
\sup_{t\in ]0,T[}E^s(u(t),N_0) \lesssim E^s(u_0,N_0) + (TN_0^{\frac 3 2 }+ N_0^{(s_\alpha-s')_+}+N_0^{-\alpha_+} )(\|u\|_{Y^{s'}_T}+\|u\|_{Y^{s'}_T}^2) \|u\|_{Y^s_T}^2 \, ,
\end{equation}
where the implicit constant only depends on $ \alpha $.
\end{proposition}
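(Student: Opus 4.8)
The plan is to control the time-variation of the modified energy $E^s(u(t),N_0)$ by differentiating $\mathcal E_N(u(t),N_0)$ in time and using the equation \eqref{dpB}. For dyadic $N\le N_0$ the energy piece is the plain $L^2$ energy, whose time derivative produces exactly the quantity $\mathcal J_N$ analysed above; its contribution is a bilinear-in-$u$, cubic term which we bound by Proposition \ref{L2bilin} applied to $G^2_{t,\chi}$ (after splitting into high-high and high-low interactions via the paraproduct $P_N(u^2)=2P_N(u_{\ll N}u)+P_N(u_{\gtrsim N}u_{\gtrsim N})$). Since all these frequencies are $\lesssim N_0$, summing the resulting bounds costs at most a power $N_0^{\frac32}$ of the cutoff together with a factor $T$ coming from the $L^2_tL^\infty_x$ norms via Corollary \ref{coro1}; this is the source of the $TN_0^{3/2}$ term. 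For $N>N_0$, the key point of the construction is that $\frac{d}{dt}\big(\tfrac12\|P_Nu\|_{L^2_x}^2+c\,\mathcal E^1_N(u)\big)$ has its worst \emph{high-low} quadratic-resonant contribution cancelled: the linear part of the equation applied to $\mathcal E^1_N$ reproduces, up to the factor $\Omega_2/\Omega_2=1$, exactly $-\mathcal J^1_N$ (this is the reason $\chi_1/\Omega_2$ appears in the definition of $\mathcal E^1_N$), so choosing $c$ appropriately kills $\mathcal J^1_N$ and leaves only $\mathcal J^2_N$ (a high-high term, harmless by Proposition \ref{L2bilin} and Lemma \ref{omega.inv}) plus the new quartic terms coming from feeding $\partial_x(u^2)$ into the cubic corrector $\mathcal E^1_N$.

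Concretely, I would write $\partial_t\mathcal E^1_N(u)$ using $\partial_t\widehat u = -i\omega_{\alpha+1}\widehat u + \widehat{\partial_x(u^2)}$ in each of the three Fourier slots. The three ``linear'' contributions combine, because of the numerator $\chi_1/\Omega_2$ and the identity $\Omega_2(\xi_1,\xi_2)=\omega_{\alpha+1}(\xi_1+\xi_2)-\omega_{\alpha+1}(\xi_1)-\omega_{\alpha+1}(\xi_2)$, into precisely $c^{-1}$ times (minus) the term $\mathcal J^1_N$ that was produced by the plain $L^2$ energy; choosing $c$ so these cancel is the heart of the modified-energy method. The three ``nonlinear'' contributions are quartic in $u$ with one extra spatial derivative. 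The gain from $1/\Omega_2\sim (N_{\min}N_{\max}^\alpha)^{-1}$ on $\mathcal E^1_N$ means we lose at worst $N_{\min}^{-1}N_{\max}^{1-\alpha}$ when that derivative lands on the highest-frequency factor; this is exactly what is \emph{not} acceptable for $\alpha<1$ for some high-low interactions. I would isolate the two terms realising this worst interaction — they come from differentiating the two $P_{\sim N}u$ slots of $\mathcal E^1_N$ when the derivative in $\partial_x(u^2)$ hits the high factor — and exhibit the algebraic cancellation between them (the two Fourier symbols differ by a sign after the change of variables exchanging the two high frequencies, since $\mathcal E^1_N$ is symmetric in those two slots). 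After this cancellation the remaining quartic terms have enough room: they are estimated by the trilinear bound \eqref{L2trilin.0} of Proposition \ref{L2trilin} (when the minimal frequency is well separated) or by the elementary bound \eqref{eL2trilin.2} of Proposition \ref{eL2trilin} (in the balanced regime $N_{\min}\sim N_{\mathrm{thd}}$), together with Lemma \ref{omega.inv} and Remark \ref{symbolprod} to check that the relevant symbols, including $\chi_1/\Omega_2$ multiplied by the symbol of $\partial_x$ and by the localisations, satisfy the Marcinkiewicz condition \eqref{pseudoprod.1}.

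Once each contribution is bounded, I would sum over the dyadic blocks $N$. The quartic terms carry a net negative power of the minimal (or third) frequency from \eqref{L2trilin.0}/\eqref{eL2trilin.2}, which makes the sum over the low frequencies converge; but summing the ``error'' contributions localised at frequencies $\sim N_0$ leaves a residual $N_0^{(s_\alpha-s')_+}+N_0^{-\alpha_+}$ coming from trading $s$-derivatives for $s'$-derivatives at the frequency scale $N_0$ (one uses $s>s'>s_\alpha$ so that the $N_0$-power is negative, or zero with an $\varepsilon$ to spare). The $L^2_tL^\infty_x$-type norms appearing after applying the multilinear estimates are all controlled by $\|u\|_{Y^{s'}_T}$ and $\|u\|_{Y^s_T}$ through the definition \eqref{Y} of $Y^s$ and Corollary \ref{coro1}, and at most two of the four factors in any quartic term carry the top regularity $s$, giving the structure $(\|u\|_{Y^{s'}_T}+\|u\|_{Y^{s'}_T}^2)\|u\|_{Y^s_T}^2$. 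Finally, taking the supremum over $t\in]0,T[$ and invoking $|E^s(u_0,N_0)|$ on the right from the constant-of-integration term yields \eqref{prop-ee.1}. The main obstacle, as flagged above, is verifying the cancellation between the two worst quartic high-low terms: everything else is a careful but routine bookkeeping of symbols and of which norm each factor goes into, whereas that cancellation is what actually makes the estimate true for $\alpha<1$.
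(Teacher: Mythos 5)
Your overall strategy is the same as the paper's: modified energy with the corrector $\chi_1/\Omega_2$, choice of $c$ so that the linear contribution of $\frac{d}{dt}\mathcal{E}_N^1$ cancels $\mathcal{J}_N^1$, estimation of $\mathcal{J}_N^2$ and of the quartic terms by the multilinear estimates after checking the Marcinkiewicz condition via Lemma \ref{omega.inv}, and summation over $N$ using $s>s'>s_\alpha$. However, there is a genuine gap at precisely the step you flag as the crux. The two worst high-low contributions (the derivative in $\partial_x(u_{\ll N}u_{\sim N})$ falling on the $\sim N$ factor, coming from the two $P_{\sim N}u$ slots of $\mathcal{E}_N^1$) do \emph{not} cancel exactly: after the change of variables their symbols are $(\chi_1/\Omega_2)(\xi_1,\xi_2+\xi_3)(\xi_2+\xi_3)$ and $-(\chi_1/\Omega_2)(\xi_1,\xi_3)(\xi_1+\xi_3)$, so the dangerous parts differ by a sign only up to a shift of the second argument of $\chi_1/\Omega_2$ by the low frequency $\xi_2$. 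What survives the near-cancellation is a commutator-type remainder, and estimating it is exactly where the gain for $\alpha<1$ must be produced: the paper applies the mean value theorem to $\chi_1/\Omega_2$ to trade the factor $|\xi_3|\sim N$ for $|\xi_2|\sim N_2\ll N$, and then verifies through Lemma \ref{omega.inv} and Remark \ref{symbolprod} that the renormalized difference $\frac{N_1N^{\alpha+1}}{N_2}\bigl[(\chi_1/\Omega_2)(\xi_1,\xi_1+\xi_3)-(\chi_1/\Omega_2)(\xi_1,\xi_3)\bigr]$ still satisfies \eqref{pseudoprod.1}, so that Theorem \ref{pseudoprod} and the elementary trilinear bound \eqref{eL2trilin.2} apply and yield the summable factor in \eqref{prop-ee.10}. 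Your sketch asserts an exact algebraic cancellation by symmetry and then sends "the remaining quartic terms" to \eqref{L2trilin.0} and \eqref{eL2trilin.2}; taken literally this either makes the bad terms vanish (false) or leaves their difference unestimated, and without the quantified $N_2/N$ gain that difference is just as large as each bad term individually, so the sum over $N>N_0$ does not close for $\alpha<1$.

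A secondary inaccuracy: for $N\le N_0$ the factor $T$ in $TN_0^{3/2}$ does not come from the $L^2_tL^\infty_x$ norms nor from Corollary \ref{coro1} (which requires a smallness condition on $T$ that is not assumed in the proposition and is not used in this proof); it comes simply from H\"older in time with $L^\infty_T$ norms, while $N_0^{3/2}$ comes from Bernstein (one derivative plus $L^1_x\to L^2_x$), as in \eqref{prop-ee.2}. This point is easily repaired, unlike the previous one, which is the heart of the proposition.
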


\begin{proof}
Let $0<t \le T \le 1$.
First, assume that $N \le N_0=2^9$. By using the definition of $\mathcal{E}_N$ in \eqref{defEN}, we have
\begin{displaymath}
\frac{d}{dt} \mathcal{E}_N(u(t)) = \int_{\mathbb R}P_N\partial_x( u^2) P_Nu \, dx \, ,
\end{displaymath}
which yields after integrating between $0$ and $t$ and applying H\"older and Bernstein's inequalities  that
\begin{displaymath}
\begin{split}
|\mathcal{E}_N(u(t))| &\le |\mathcal{E}_N(u_0)|+\Big|\int_{\mathbb R_t}P_N\partial_x( u^2) P_Nu  \, \Big| \\ &
\lesssim  |\mathcal{E}_N(u_0)| + t\, N^{\frac32} \|P_N(u^2)\|_{L^\infty_TL^{1}_x} \|P_Nu\|_{L^\infty_TL^2_x} \, .
\end{split}
\end{displaymath}
Thus, we deduce after taking the supreme over $t \in [0,T[$ and summing over $N \le N_0$ that
\begin{equation} \label{prop-ee.2}
\sup_{t \in ]0,T[}\sum_{N \le N_0} \cro{N}^{2s}  \big|\mathcal{E}_N(u(t)) \big| \lesssim   \sum_{N \le N_0} \cro{N}^{2s}  \big|\mathcal{E}_N(u_0)|+ T\,N_0^{\frac32 }\|u\|_{L^\infty_TL^2_x} \|u\|_{L^\infty_T H^s_x}^2 \, ,
\end{equation}
where we used that, since $ s>0$, $ N^s \|P_N(u^2)\|_{L^\infty_T L^1_x}\lesssim \|u\|_{L^\infty_T L^2_x} \|u\|_{L^\infty_T H^s_x} $.

Now, for $ N\ge N_0$, we take the extension $ \tilde{u} =\rho_T(u)$ defined in \eqref{defrho}. To simplify the notation we drop the tilde in the sequel.
We first notice that
\begin{align}
\cro{N}^{2s}\mathcal{E}_N(u(t)) &= \cro{N}^{2s}\mathcal{E}_N(u_0) + \cro{N}^{2s}\int_{\R_t}P_N\partial_x(u^2)P_Nu + c \cro{N}^{2s}\int_0^t\frac{d}{dt}\mathcal{E}_N^{1}(t')dt' \nonumber \\
&=: \cro{N}^{2s}\mathcal{E}_N(u_0) + \mathcal{J}_N(t) + c \mathcal{K}_N(t)\, , \label{prop-ee.3}
\end{align}
where $\mathcal{J}_N(t)=\mathcal{J}_N^1(t)+\mathcal{J}_N^2(t)$ is defined in \eqref{JN}, \eqref{defJN2}, \eqref{defJN1}.
\medskip

\noindent \textit{Estimate for $\mathcal{J}_N^2$.} We get from Proposition \ref{L2bilin} that
\begin{align*}
  |\mathcal{J}_N^2(t)| &\lesssim   \sum_{N_1\gtrsim N} N^{2s} |G^2_t(u_{N_1}, u_{\sim N_1}, P_N^2\partial_x u)|\\
  &\lesssim  \sum_{N_1\gtrsim N} N^{2s-s'+\frac 12-\frac\alpha 4} N_1^{-2s+(1-\alpha)_+} \|u_{N_1}\|_{Y^s} \|u_{\sim N_1}\|_{Y^s} \|P_Nu\|_{Y^{s'}}.
\end{align*}
Since $ s>\frac1  2 -\frac \alpha 2 $  and $s'>s_\alpha$, we deduce that
\begin{equation}\label{prop-ee.4}
 \sup_{t\in ]0,T[} \sum_{N>N_0} |\mathcal{J}_N^2(t)| \lesssim  \sum_{N>N_0} N^{(s_\alpha-s')_+} \|u\|_{Y^{s'}} \|u\|_{Y^s}^2 \lesssim  N_0^{(s_\alpha-s')_+} \|u\|_{Y^{s'}} \|u\|_{Y^s}^2.
\end{equation}
It remains to estimate $\mathcal{J}_N^1+c\mathcal{K}_N$.  Using equation \eqref{dpB} we obtain
\begin{align*}
  \mathcal{K}_N(t) &= -N^{2s}\int_{\R^2_t} \frac{\chi_1(\xi_1,\xi_2)}{\Omega_2(\xi_1,\xi_2)} i\xi_1(\omega_{\alpha+1}(\xi_1)+\omega_{\alpha+1}(\xi_2)-\omega_{\alpha+1}(\xi)) \widehat{u_{\ll N}}(\xi_1) \widehat{u_{\sim N}}(\xi_2) \widehat{u_{\sim N}}(-\xi)\\
  &\quad + N^{2s}\int_{\R^2_t} \frac{\chi_1(\xi_1,\xi_2)}{\Omega_2(\xi_1,\xi_2)} \xi_1 \widehat{P_{\ll N}\partial_x(u^2)}(\xi_1) \widehat{u_{\sim N}}(\xi_2) \widehat{u_{\sim N}}(-\xi)\\
  &\quad + N^{2s}\int_{\R^2_t} \frac{\chi_1(\xi_1,\xi_2)}{\Omega_2(\xi_1,\xi_2)} \xi_1 \widehat{u_{\ll N}}(\xi_1) \widehat{P_{\sim N}\partial_x(u^2)}(\xi_2) \widehat{u_{\sim N}}(-\xi)\\
  &\quad + N^{2s}\int_{\R^2_t} \frac{\chi_1(\xi_1,\xi_2)}{\Omega_2(\xi_1,\xi_2)} \xi_1 \widehat{u_{\ll N}}(\xi_1) \widehat{u_{\sim N}}(\xi_2) \widehat{P_{\sim N}\partial_x(u^2)}(-\xi)\\
  &:= -\mathcal{J}_N^1 + \mathcal{K}_N^1+\mathcal{K}_N^2+\mathcal{K}_N^3.
\end{align*}
Taking $c=1$ this leads to estimate $\mathcal{J}_N^1+\mathcal{K}_N = \mathcal{K}_N^1+\mathcal{K}_N^2+\mathcal{K}_N^3$.

\medskip

\noindent \textit{Estimate for $\mathcal{K}_N^1$.} We have
$$
\mathcal{K}_N^1(t) = N^{2s} \sum_{N_1\ll N}\sum_{N_2\vee N_3\gtrsim N_1} (N_1N^\alpha)^{-1}N_1^2 \int_{\R_t} \Pi^3_{\chi_{\mathcal{K}^1}}(u_{N_2}, u_{N_3}, u_{\sim N}) u_{\sim N}
$$
where
$$
\chi_{\mathcal{K}^1}(\xi_1,\xi_2,\xi_3) = i \chi_1(\xi_1+\xi_2,\xi_3) \frac{N_1N^\alpha}{\Omega_2(\xi_1+\xi_2,\xi_3)} \frac{(\xi_1+\xi_2)^2}{N_1^2} \phi_{N_1}(\xi_1+\xi_2),
$$
with $\chi_1$ defined in \eqref{chi1}.
From Lemma \ref{omega.inv}, $\chi_{\mathcal{K}^1}$ satisfies \eqref{pseudoprod.1}. Therefore we get from Proposition \ref{eL2trilin} that
\begin{align*}
 |\mathcal{K}_N^1(t)| &\lesssim  \sum_{N_1\ll N} \sum_{N_2\vee N_3\gtrsim N_1}N_1 N^{-\alpha_+} N_2^{(\frac12)_-}\cro{N_2}^{-\frac \alpha 4-s'} N_3^{(\frac12)_-}\cro{N_3}^{-\frac \alpha 4-s'}  (N_2\vee N_3)^{0_+}\\
 & \hspace*{40mm} \times \|u_{N_2}\|_{Y^{s'}} \|u_{N_3}\|_{Y^{s'}} \|u_{\sim N}\|_{Y^s}^2\\
 & \lesssim   N^{-\alpha_+}\sum_{N_1\ll N}  N_1 \cro{N_1}^{\frac 1 2 -\frac \alpha 4 -s'_+} \|u\|_{Y^{s'}}^2 \|u\|_{Y^s}^2
\end{align*}
where in the last step we used that $ \frac 1 2 -\frac \alpha 4 -s' <0 $.
We thus infer that
\begin{equation}\label{prop-ee.7}
\sup_{t\in ]0,T[}\sum_{N>N_0} |\mathcal{K}_N^1(t)| \lesssim   (N_0^{-\alpha_+}+ N_0^{(s_\alpha-s')_+})\|u\|_{Y^{s'}}^2 \|u\|_{Y^s}^2.
\end{equation}
\medskip

\noindent \textit{Estimates for $\mathcal{K}_N^2+\mathcal{K}_N^3$.} Using, as in Subsection \ref{section-ee},  that
\begin{equation}\label{prop-ee.8}
P_{\sim N}(u^2) = 2u_{\ll N}u_{\sim N} + 2N^{-1}\Pi_{\tilde{\chi}}^2(\partial_xu_{\ll N}, u_{\sim N}) + P_{\sim N}(u_{\gtrsim N}u_{\gtrsim N}),
\end{equation}
where $\tilde{\chi}$ satisfies \eqref{pseudoprod.1}, we decompose $\mathcal{K}_N^2+\mathcal{K}_N^3$ as $\mathcal{K}_N^{31}+\mathcal{K}_N^{32}+\mathcal{K}_N^{33}$ with
$$
\mathcal{K}_N^{31}(t) = 2N^{2s} \int_{\R^2_t} \frac{\chi_1(\xi_1,\xi_2)}{\Omega_2(\xi_1,\xi_2)} \xi_1 \widehat{u_{\ll N}}(\xi_1) \left[ \widehat{\partial_x(u_{\ll N}u_{\sim N})}(\xi_2) \widehat{u_{\sim N}}(-\xi) + \widehat{u_{\sim N}}(\xi_2) \widehat{\partial_x(u_{\ll N}u_{\sim N})}(-\xi)\right],
$$
\begin{multline*}
\mathcal{K}_N^{32}(t) = 2N^{2s-1} \int_{\R^2_t} \frac{\chi_1(\xi_1,\xi_2)}{\Omega_2(\xi_1,\xi_2)} \xi_1 \widehat{u_{\ll N}}(\xi_1) \big[ \mathcal{F}_x(\partial_x\Pi_{\tilde{\chi}}^2(\partial_xu_{\ll N}, u_{\sim N}))(\xi_2) \widehat{u_{\sim N}}(-\xi) \\ + \widehat{u_{\sim N}}(\xi_2) \mathcal{F}(\partial_x\Pi_{\tilde{\chi}}^2(\partial_xu_{\ll N}, u_{\sim N}))(-\xi)\big],
\end{multline*}
and
\begin{multline*}
\mathcal{K}_N^{33}(t) = N^{2s} \int_{\R^2_t} \frac{\chi_1(\xi_1,\xi_2)}{\Omega_2(\xi_1,\xi_2)} \xi_1 \widehat{u_{\ll N}}(\xi_1) \big[ \mathcal{F}_x(\partial_xP_{\sim N}(u_{\gtrsim N}u_{\gtrsim N}))(\xi_2) \widehat{u_{\sim N}}(-\xi) \\ + \widehat{u_{\sim N}}(\xi_2) \mathcal{F}(\partial_xP_{\sim N}(u_{\gtrsim N}u_{\gtrsim N}))(-\xi)\big].
\end{multline*}

\medskip

\noindent \textit{Estimate for $\mathcal{K}_N^{31}$.} We have
\begin{align*}
 \mathcal{K}_N^{31}(t) &= -2iN^{2s} \int_{\R^3_t} \left(\frac{\chi_1}{\Omega_2}\right)(\xi_1,\xi_2) \widehat{\partial_x u_{\ll N}}(\xi_1) (i\xi_2) \widehat{u_{\ll N}}(\xi_3) \widehat{u_{\sim N}}(\xi_2-\xi_3) \widehat{u_{\sim N}}(-\xi_1-\xi_2)\\
 &\quad -2iN^{2s} \int_{\R^3_t} \left(\frac{\chi_1}{\Omega_2}\right)(\xi_1,\xi_2) \widehat{\partial_x u_{\ll N}}(\xi_1) \widehat{u_{\sim N}}(\xi_2) (-i(\xi_1+\xi_2)) \widehat{u_{\ll N}}(\xi_3) \widehat{u_{\sim N}}(-(\xi_1+\xi_2+\xi_3)).
\end{align*}
Now a change a variables leads to
$$
\mathcal{K}_N^{31} (t)= 2N^{2s} \sum_{N_1,N_2\ll N} \int_{\R^3_t} \sigma(\xi_1,\xi_2,\xi_3) \widehat{\partial_x u_{N_1}}(\xi_1) \widehat{u_{N_2}}(\xi_2) \widehat{u_{\sim N}}(\xi_3) \widehat{u_{\sim N}}(-\xi)
$$
with $\xi=\xi_1+\xi_2+\xi_3$ and
$$
\sigma(\xi_1,\xi_2,\xi_3) = \left(\frac{\chi_1}{\Omega_2}\right) (\xi_1,\xi_2+\xi_3)(\xi_2+\xi_3) - \left(\frac{\chi_1}{\Omega_2}\right)(\xi_1,\xi_3) (\xi_1+\xi_3).
$$
Let us rewrite $\sigma$ as follows:
\begin{align*}
  \sigma(\xi_1,\xi_2,\xi_3) &= \left[ \left(\frac{\chi_1}{\Omega_2}\right)(\xi_1,\xi_1+\xi_3) - \left(\frac{\chi_1}{\Omega_2}\right)(\xi_1,\xi_3) \right] \widetilde{\phi}_N(\xi_3)\xi_3\\
  &+ \left(\frac{\chi_1}{\Omega_2}\right)(\xi_1, \xi_2+\xi_3) \widetilde{\phi}_{N_2}(\xi_2)\xi_2\\
  &- \left(\frac{\chi_1}{\Omega_2}\right)(\xi_1,\xi_3) \widetilde{\phi}_{N_1}(\xi_1)\xi_1.
\end{align*}
According to Lemma \ref{omega.inv} and Remark \ref{symbolprod}, it is easy to check that $N_1N^\alpha \left(\frac{\chi_1}{\Omega_2}\right)(\xi_1, \xi_2+\xi_3)$, $\frac{1}{N_2} \widetilde{\phi}_{N_2}(\xi_2)\xi_2$ and thus $\frac{N_1N^\alpha}{N_2} \left(\frac{\chi_1}{\Omega_2}\right)(\xi_1, \xi_2+\xi_3) \widetilde{\phi}_{N_2}(\xi_2)\xi_2$ satisfy \eqref{pseudoprod.1}.
In the same way, $\frac{N_1N^\alpha}{N_1} \left(\frac{\chi_1}{\Omega_2}\right)(\xi_1,\xi_3) \widetilde{\phi}_{N_1}(\xi_1)\xi_1$ satisfies \eqref{pseudoprod.1}. Now we get from
the mean value theorem that for any multi-indice $\beta=(\beta_1, 0, \beta_3)$, there exists $|\tilde{\xi}_\beta|\sim N$  such that
\begin{align*}
  \partial^\beta \left[ \left(\frac{\chi_1}{\Omega_2}\right)(\xi_1,\xi_1+\xi_3) - \left(\frac{\chi_1}{\Omega_2}\right)(\xi_1,\xi_3) \right]
  &= \partial^{(\beta_1,\beta_3)}\left(\frac{\chi_1}{\Omega_2}\right)(\xi_1,\xi_2+\xi_3) - \partial^{(\beta_1\beta_3)}\left(\frac{\chi_1}{\Omega_2}\right)(\xi_1,\xi_3)\\
  &= \partial^{(\beta_1, \beta_3+1)} \left(\frac{\chi_1}{\Omega_2}\right)(\xi_1,\xi_\beta) \xi_2.
\end{align*}
On the other hand, for any $\beta=(\beta_1,\beta_2,\beta_3)$ with $\beta_2\ge 1$, we have
$$
\partial^\beta \left[ \left(\frac{\chi_1}{\Omega_2}\right)(\xi_1,\xi_1+\xi_3) - \left(\frac{\chi_1}{\Omega_2}\right)(\xi_1,\xi_3) \right] = \partial^{(\beta_1, \beta_2+\beta_3)}
\left(\frac{\chi_1}{\Omega_2}\right) (\xi_1, \xi_2+\xi_3).
$$
It thus follows from Lemma \ref{omega.inv} that $\frac{N_1N^{\alpha+1}}{N_2} \left[ \left(\frac{\chi_1}{\Omega_2}\right)(\xi_1,\xi_1+\xi_3) - \left(\frac{\chi_1}{\Omega_2}\right)(\xi_1,\xi_3) \right]$ satisfies \eqref{pseudoprod.1}. Therefore we deduce that $\chi_{\mathcal{K}^{31}} := \frac{N_1}{N_1\vee N_2}N^{\alpha}\sigma$ satisfies \eqref{pseudoprod.1}.
Rewriting $\mathcal{K}_N^{31}$ as
$$
\mathcal{K}_N^{31} = 2N^{2s-\alpha} \sum_{N_1,N_2\ll N} \frac{N_1\vee N_2}{N_1} \int_{\R_t} \Pi_{\chi_{\mathcal{K}^{31}}}^3( \partial_xu_{N_1}, u_{N_2}, u_{\sim N}) u_{\sim N}
$$
we get from estimate \eqref{eL2trilin.2} that
\begin{align*}
  |\mathcal{K}_N^{31}(t)| &\lesssim \, N^{-\alpha} \sum_{N_1,N_2\ll N} (N_1\vee N_2) N_1^{\frac12_-}\cro{N_1}^{-\frac \alpha 4-s'} N_2^{{\frac12}_-}\cro{N_2}^{-\frac \alpha 4-s'} N^{0_+} \|u_{N_1}\|_{Y^{s'}} \|u_{N_2}\|_{Y^{s'}} \|u_{\sim N}\|_{Y^s}^2\, .
\end{align*}
Recalling that $\frac 12-\frac \alpha 4-s'<0 $, it follows as in \eqref{prop-ee.7} that
\begin{equation}\label{prop-ee.10}
 \sup_{t\in ]0,T[}  \sum_{N>N_0} |\mathcal{K}_N^{31}(t)| \lesssim    (N_0^{-\alpha_+}+ N_0^{(s_\alpha-s')_+})\|u\|_{Y^{s'}_T}^2\|u\|_{Y^s_T}^2.
\end{equation}

\medskip

\noindent \textit{Estimate for $\mathcal{K}_N^{32}$.} We only deal with the first term $\mathcal{K}_N^{321}$ of the sum in $\mathcal{K}_N^{32}$ since the other is estimated similarly. With the notation of Section \ref{pseudoprodest} we obtain
$$
\mathcal{K}_N^{321}(t) = 2N^{2s}\sum_{N_1,N_2\ll N} N_1N_2 (N_1N^\alpha)^{-1} \int_{\R_t} \Pi^3_{\chi_{\mathcal{K}^{321}}}( u_{N_1}, u_{N_2}, u_{\sim N}) u_{\sim N}
$$
with
$$
\chi_{\mathcal{K}^{321}}(\xi_1,\xi_2,\xi_3) = -\chi_1(\xi_1,\xi_2+\xi_3)\tilde{\chi}(\xi_2,\xi_3) \frac{N_1N^\alpha}{\Omega_2(\xi_1,\xi_2+\xi_3)} \frac{\xi_1}{N_1} \frac{\xi_2}{N_2} \frac{\xi_2+\xi_3}{N}.
$$
Noticing that $\chi_{\mathcal{K}^{321}}(\xi_1,\xi_2)$ satisfies condition \eqref{pseudoprod.1}, estimate \eqref{eL2trilin.2} implies that
\begin{align*}
 |\mathcal{K}_N^{321}(t)| &\lesssim \,N^{-\alpha} \sum_{N_1,N_2\ll N} N_2 N_1^{(\frac12)_-} \cro{N_1}^{-\frac \alpha 4-s'} N_2^{(\frac12)_-} \cro{N_2}^{-\frac \alpha 4-s'} N^{0_+} \|u_{N_1}\|_{Y^{s'}} \|u_{N_2}\|_{Y^{s'}} \|u_{\sim N}\|_{Y^s}^2\, .\\
\end{align*}
which again, as in  \eqref{prop-ee.7}, leads to
\begin{equation}\label{prop-ee.11}
  \sup_{t\in ]0,T[} \sum_{N>N_0} |\mathcal{K}_N^{32}(t)| \lesssim  (N_0^{-\alpha_+}+ N_0^{(s_\alpha-s')_+}) \|u\|_{Y^{s'}_T}^2 \|u\|_{Y^s_T}^2.
\end{equation}

\medskip

\noindent \textit{Estimate for $\mathcal{K}_N^{33}$.} We follow again the same arguments. We only deal with the first term $\mathcal{K}_N^{331}$ of the sum in $K_N^{33}$ and rewrite it as
$$
\mathcal{K}_N^{331}(t) = N^{2s} \sum_{N_1\ll N}\sum_{N_2\gtrsim N}  N_1 (N_1N^\alpha)^{-1} N \int_{\R_t} \Pi^3_{\chi_{\mathcal{K}^{331}}} (u_{N_1}, u_{N_2}, u_{\sim N_2}) u_{\sim N}
$$
with
$$
\chi_{\mathcal{K}^{331}}(\xi_1,\xi_2,\xi_3) = i \chi_1(\xi_1,\xi_2+\xi_3) \frac{N_1N^\alpha}{\Omega_2(\xi_1,\xi_2+\xi_3)} \frac{\xi_1}{N_1} \frac{\xi_2+\xi_3}N \phi_{\sim N}(\xi_2+\xi_3).
$$
Then, thanks to estimate \eqref{eL2trilin.2}, we get
\begin{align*}
 |\mathcal{K}_N^{331}(t)| &\lesssim \, N^{(2s-s'+s_\alpha)} \sum_{N_1\ll N} \sum_{N_2\gtrsim N} N_1^{(\frac12)_-} \cro{N_1}^{-\frac \alpha 4-s'}  N_2^{-2s_+} \|u_{N_1}\|_{Y^{s'}} \|u_{N_2}\|_{Y^s} \|u_{\sim N_2}\|_{Y^s} \|u_{\sim N}\|_{Y^{s'}}\\
 &\lesssim  N^{(2s-s'+s_\alpha)}  \sum_{N_2\gtrsim N} N_2^{-2s_+} \|u\|_{Y^{s'}}^2 \|u_{N_2}\|_{Y^s}^2.
\end{align*}
This leads to
\begin{equation}\label{prop-ee.12}
 \sup_{t\in ]0,T[}  \sum_{N>N_0} |\mathcal{K}_N^{33}(t)| \lesssim   N_0^{(s_\alpha-s')_+}  \|u\|_{Y^{s'}}^2 \|u\|_{Y^s}^2.
\end{equation}
Combining \eqref{prop-ee.2}-\eqref{prop-ee.3}-\eqref{prop-ee.4}-\eqref{prop-ee.7}-\eqref{prop-ee.10}-\eqref{prop-ee.11}-\eqref{prop-ee.12}, we conclude the proof of Proposition \ref{prop-ee}.
\end{proof}
\begin{corollary} \label{coro-ee}
Let $0<\alpha \le 1$.  Let $s>s_\alpha$, $0<T \le 1$ and $u\in Y^s_T$ be a solution of \eqref{dpB} on $[0,T]$. Then for any $ N_0\gg 1 $ we have
\begin{equation} \label{prop-ee.13} 
\sup_{t\in ]0,T[}\sum_{N>N_0}\cro{N}^{2s}\Bigl| \mathcal{E}_N(u(t),N_0)-  \mathcal{E}_N(u_0,N_0) \Bigr| \lesssim ( N_0^{(s_\alpha-s)_+}+N_0^{-\alpha_+} )(\|u\|_{Y^s_T}^3 + \|u\|_{Y^s_T}^4) \, .
\end{equation}
\end{corollary}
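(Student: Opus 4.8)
The plan is to read the statement off the computations already carried out in the proof of Proposition~\ref{prop-ee}, keeping only the contribution of the high frequencies $N>N_0$ and choosing there the auxiliary exponent equal to $s$ itself.

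First I would recall the algebraic decomposition obtained in that proof. Fix a dyadic number $N>N_0$ and replace $u$ by its extension $\rho_T(u)$, which coincides with $u$ on $]0,T[$ since $0<T\le 1$. Then identity~\eqref{prop-ee.3}, together with the choice $c=1$ and the cancellation $\mathcal{J}_N^1+\mathcal{K}_N=\mathcal{K}_N^1+\mathcal{K}_N^2+\mathcal{K}_N^3$ established there, yields for every $t\in]0,T[$
\[
\cro{N}^{2s}\bigl(\mathcal{E}_N(u(t),N_0)-\mathcal{E}_N(u_0,N_0)\bigr)=\mathcal{J}_N^2(t)+\mathcal{K}_N^1(t)+\mathcal{K}_N^{31}(t)+\mathcal{K}_N^{32}(t)+\mathcal{K}_N^{33}(t),
\]
where one also uses $\mathcal{K}_N^2+\mathcal{K}_N^3=\mathcal{K}_N^{31}+\mathcal{K}_N^{32}+\mathcal{K}_N^{33}$. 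It then suffices to sum over $N>N_0$, take the supremum in $t\in]0,T[$, and apply the triangle inequality.

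Next I would invoke the five bounds proved inside Proposition~\ref{prop-ee}, namely \eqref{prop-ee.4} for $\mathcal{J}_N^2$, \eqref{prop-ee.7} for $\mathcal{K}_N^1$, \eqref{prop-ee.10} for $\mathcal{K}_N^{31}$, \eqref{prop-ee.11} for $\mathcal{K}_N^{32}$ and \eqref{prop-ee.12} for $\mathcal{K}_N^{33}$, but now read with $s'=s$. This is legitimate because those per-term estimates never use the strict inequality $s'<s$ from the hypotheses of Proposition~\ref{prop-ee}, only the conditions $s'>s_\alpha$ and $\frac12-\frac\alpha4-s'<0$ together with $s>\frac12-\frac\alpha2$ and $s>0$, all of which hold for $s>s_\alpha$ with $0<\alpha\le 1$. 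Since $s>s_\alpha$ one has $(s_\alpha-s)_+<0$ and $-\alpha_+<0$, so the resulting geometric series in $N>N_0$ converge and are controlled by their first term, which produces the prefactor $N_0^{(s_\alpha-s)_+}+N_0^{-\alpha_+}$. Finally, $\mathcal{J}_N^2$ is trilinear in $u$, hence contributes $\|u\|_{Y^s_T}^3$, while $\mathcal{K}_N^1,\mathcal{K}_N^{31},\mathcal{K}_N^{32},\mathcal{K}_N^{33}$ are quadrilinear and contribute $\|u\|_{Y^s_T}^4$; adding these gives exactly the right-hand side of~\eqref{prop-ee.13}.

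In short, there is no genuinely new obstacle: the content of the corollary is already contained in the proof of Proposition~\ref{prop-ee}, and the only points to check are the harmless substitution $s'=s$ and the fact that the low-frequency part $N\le N_0$ — the source of the factor $TN_0^{3/2}$ in~\eqref{prop-ee.1} — simply does not enter the sum restricted to $N>N_0$, so it plays no role here.
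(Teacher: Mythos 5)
Your argument is exactly the paper's own proof: the corollary is obtained from the identity \eqref{prop-ee.3} (with $c=1$ and the decomposition $\mathcal{J}_N+\mathcal{K}_N=\mathcal{J}_N^2+\mathcal{K}_N^1+\mathcal{K}_N^{31}+\mathcal{K}_N^{32}+\mathcal{K}_N^{33}$) by summing over $N>N_0$ and invoking the bounds \eqref{prop-ee.4}, \eqref{prop-ee.7}, \eqref{prop-ee.10}, \eqref{prop-ee.11}, \eqref{prop-ee.12}. Your explicit check that these estimates remain valid with $s'=s$ (since only $s'>s_\alpha$, i.e.\ $\frac12-\frac\alpha4-s'<0$, is used) and that the low-frequency term producing $TN_0^{3/2}$ does not appear is precisely the implicit content of the paper's two-line proof.
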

\begin{proof}
According to \eqref{prop-ee.3}, it suffices to bound
$$
\sup_{t\in ]0,T[} \sum_{N>N_0} \Bigl|\mathcal{J}_N(t) +  \mathcal{K}_N(t)\Bigr|
$$
and the result follows from by combining \eqref{prop-ee.4}-\eqref{prop-ee.7}-\eqref{prop-ee.10}-\eqref{prop-ee.11}-\eqref{prop-ee.12}.
\end{proof}

\section{Estimates for the difference of two solutions}

In this section, we provide the needed estimates for the difference $w$ of two solutions $u,v$ of \eqref{dpB}. If $w=u-v$ and $z=u+v$, then
\begin{equation}\label{eqdiff}
(\partial_t-L_{\alpha+1})w=\partial_x(zw) \, .
\end{equation}
The lack of symmetry in the nonlinear term of \eqref{eqdiff} prevents us to estimate $w$ in $Y^s_T$, $s>s_\alpha$. To overcome this difficulty, we will rather work at a lower regularity level $\sigma<0$ and more precisely with
\begin{equation}\label{sigma}
 \sigma\in \Bigl]-\frac 1 2 +\frac \alpha 4,\min(0,s-2+\frac{3}{2} \alpha) \Bigr[ \; .
 \end{equation}
 \begin{remark}  \label{rk51}

For $ \alpha\in ]0,1] $ and $ s>s_\alpha=\frac 3 2 - \frac 5 4 \alpha $, it holds  $-\frac 1 2 +\frac \alpha 4<0 $ and  $s-2+\frac{3}{2} \alpha>-\frac 1 2 + \frac \alpha 4 $. Therefore, the definition interval in \eqref{sigma} is never empty. Moreover, it is worth noticing that $ -\sigma<\frac 1 2 -\frac \alpha 4\le s_\alpha<s $.

\end{remark}
  Since we are not able to control the $X_T^{\sigma-1,1}\cap L^2_TW^{1-\alpha+(\sigma-s_\alpha)_-, \infty}_x$ part of $w$ for $\sigma<0$ we need to bound the difference in the sum space $F^{\sigma, \frac12}$. Finally, to treat some low-high interactions in the energy estimates, we also need to add a weight on the low space frequencies so that $w$ will take place in $\overline{Z}^\sigma$.

\subsection{Bilinear estimate}

\begin{proposition}\label{bediff} Let $0<\alpha \le 1$.  Assume that $0<T\le 1$,  $s>s_\alpha$ and  $ -\frac 1 2 +\frac \alpha 4< \sigma<\min (0,s-2+\frac{3}{2} \alpha) $.
Let $z\in Y^s_T$ and let $w\in \overline{Z}^\sigma_T$  be a solution of \eqref{eqdiff} on $]0,T[$ with  $ w_0\in \overline{H^\sigma} $. Then  it holds
 \begin{equation}\label{bediff0}
    \|w\|_{\overline{F}_T^{\sigma, 1/2}} \lesssim \|w_0\|_{\overline{H}^{\sigma}} +(1+\|z\|_{Y^s_T})\|z\|_{Y^s_T} \|w\|_{L^\infty_T H^\sigma_x}
   \, .
  \end{equation}
\end{proposition}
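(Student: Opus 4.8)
The plan is to adapt the bilinear estimate strategy from \cite{MV} to the non-symmetric equation \eqref{eqdiff}, working in the sum space $\overline{F}^{\sigma,1/2}_T = \overline{X}^{\sigma-1,1}_T + \overline{X}^{\sigma,(1/2)_+}_T$ rather than in a single Bourgain space, so as to accommodate the negative index $\sigma$ and the low-frequency weight $\cro{|\xi|^{-1}}$. First I would localize in spatial frequency: writing $w = \sum_N w_N$ and $z = \sum_{N'} z_{N'}$, I would use the Duhamel formula together with the standard linear estimates in $X^{\sigma,b}$ spaces (as recalled around \eqref{bilinear}) to reduce \eqref{bediff0} to controlling $\|\partial_x(zw)\|_{\overline{F}^{\sigma,-1/2}_T}$, i.e.\ to a dyadic sum of terms $\|P_N\partial_x(z_{N_1}w_{N_2})\|$ measured in the appropriate dual norm. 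The point of the sum space is that for the resonant/high-modulation pieces one uses the $\overline{X}^{\sigma-1,1}$ component (paying one derivative but gaining a full power of the modulation, hence a factor $|\Omega_2|^{-1}\sim (N_{min}N_{max}^\alpha)^{-1}$ via Lemma \ref{res2}), while for the remaining pieces one uses the $\overline{X}^{\sigma,(1/2)_+}$ component together with the refined Strichartz / $L^2_TL^\infty_x$ control contained in the $Y^s_T$ norm of $z$.

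The core of the argument is a case analysis on the relative sizes of $N$, $N_1$, $N_2$. In the \emph{high-high to low} regime ($N_1\sim N_2 \gg N$) and the \emph{low-high} regime ($N_1 \ll N_2 \sim N$), I would peel off the derivative onto the lowest-frequency factor (commutator/integration by parts as in \cite{MV}), estimate the factor of $z$ at a fixed frequency by its contribution to $\|z\|_{Y^s_T}$ (using $J^{(s-s_\alpha)+(1-\alpha)_-}z \in L^2_TL^\infty_x$ for the top frequency and Bernstein for low frequencies), and place the factor of $w$ in $L^\infty_T H^\sigma_x$; the modulation gain from the $\overline{X}^{\sigma-1,1}$-piece of $w$ must absorb the derivative loss. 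This is exactly where the constraint $\sigma > -\tfrac12 + \tfrac\alpha4$ and the upper bound $\sigma < s - 2 + \tfrac32\alpha$ from \eqref{sigma} enter: the former guarantees summability of the low-frequency weighted pieces of $w$ (the weight $\cro{|\xi|^{-1}}$ is what makes the low-high interaction converge), and the latter ensures that the regularity $s$ of $z$ is high enough that the output lands at level $\sigma$ with room to spare. The factor $(1+\|z\|_{Y^s_T})\|z\|_{Y^s_T}$ reflects that one can lose either zero or one extra power of $z$ depending on whether a Coifman--Meyer-type bound (Theorem \ref{pseudoprod}) on the commutator symbol is invoked.

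I expect the main obstacle to be the low-high interaction term $P_N\partial_x(z_{\ll N} w_N)$, which is the analogue of the ``bad'' high-low interaction already flagged in the introduction. Here the derivative wants to fall on $w_N$, and the resonance function is only of size $N_{min}N_{max}^\alpha = N_{\min}(z)\, N^\alpha$, which for $\alpha<1$ does not by itself recover the full derivative. The resolution, as in the a priori estimates of Section 4, is to split the modulation variables of the three factors, put the largest-modulation factor into the $X^{\sigma-1,1}$ component, use Lemma \ref{eqFsb} to pass between $L^2_{x,t}$ and the $F$-norm, and exploit the extra smallness $N_0^{-}$ coming from the frequency truncation built into the resolution scheme; the low-frequency weight on $w$ supplies precisely the missing negative power of $N_{\min}(z)$. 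A secondary technical point is handling the sharp cutoff $1_T$ when extending to $\R^2$ (Lemmas \ref{ihigh-lem}--\ref{ilow-lem}), splitting $1_T = 1_{T,R}^{low} + 1_{T,R}^{high}$ with $R$ tuned as in Proposition \ref{L2bilin}; this is routine given the lemmas already established. Once every dyadic block is bounded by a summable quantity times $\|w_0\|_{\overline{H}^\sigma} + (1+\|z\|_{Y^s_T})\|z\|_{Y^s_T}\|w\|_{L^\infty_T H^\sigma}$, summing over $N, N_1, N_2$ yields \eqref{bediff0}.
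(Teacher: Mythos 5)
Your skeleton (Duhamel plus the linear Bourgain estimates to reduce to $\|\partial_x(zw)\|_{\overline{F}^{\sigma,-\frac12}_T}$, dyadic case analysis, sum space, $L^2_TL^\infty_x$ control of $z$ through $Y^s_T$) is the same as the paper's, but you have misplaced the difficulty. The low--high term $z_{\lesssim N}w_{\sim N}$, which you single out as the main obstacle and want to attack with commutators, the resonance relation and a modulation splitting, is in fact trivial: putting $\partial_x(z_{\lesssim N}w_{\sim N})$ into the $X^{\sigma-1,0}$ component (dual, through Duhamel, to the $X^{\sigma-1,1}$ part of the sum space) absorbs the derivative, and H\"older gives directly $\|z\|_{L^2_TL^\infty_x}\|w\|_{L^\infty_TH^\sigma_x}$, as in \eqref{rr3}; no resonance, no weight, and no Theorem \ref{pseudoprod} enter. (Likewise, the $1_{T,R}^{low/high}$ machinery of Lemmas \ref{ihigh-lem}--\ref{ilow-lem} is not needed here; the paper simply works with the extension $\rho_T$.) The genuinely dangerous term is the one you pass over: the high--high interaction $\sum_{N_1\gg N}z_{N_1}w_{\sim N_1}$ with output at low frequency $N$, where a crude bound is either non-summable over $N$ (when $s\le\frac12$, which is allowed for $\alpha$ close to $1$) or gives no usable smallness, so the resonance $|\Omega_2|\gtrsim NN_1^\alpha$ of Lemma \ref{res2} must be exploited through a three-fold case analysis on which modulation is dominant, as in \eqref{rr4}--\eqref{rr6}.

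More seriously, your treatment of that regime is internally inconsistent and misses the closing mechanism. You propose to place $w$ in $L^\infty_TH^\sigma_x$ while simultaneously harvesting "the modulation gain from the $\overline{X}^{\sigma-1,1}$-piece of $w$"; you cannot do both. When the dominant modulation sits on $w$ (the case \eqref{rr6}), Lemma \ref{eqFsb} forces $\|w\|_{\overline{F}^{\sigma,\frac12}_T}$ itself onto the right-hand side, a quantity absent from \eqref{bediff0}, so one must absorb it into the left-hand side with a small constant. Your only source of smallness is "the frequency truncation built into the resolution scheme", but no such truncation exists for this proposition: the threshold $N_0$ has to be introduced inside the proof and chosen at the end as $N_0\sim(1+\|z\|_{Y^s_T})^2$ so that the factor $N_0^{-\frac12}\|z\|_{Y^s_T}$ (in fact $N_0^{-1+\frac\alpha4}$) in front of $\|w\|_{\overline{F}^{\sigma,\frac12}_T}$ can be absorbed. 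This choice, combined with the $N_0^{\frac12}$ loss in the low-output-frequency block \eqref{rr1}, is the true origin of the prefactor $(1+\|z\|_{Y^s_T})\|z\|_{Y^s_T}$; your explanation of that prefactor via a Coifman--Meyer commutator bound is not the mechanism, and your claim that the low-frequency weight on $w$ supplies a missing negative power of the low frequency of $z$ does not apply, since in the low--high term the low-frequency factor is $z$ (unweighted), while the weight on $w$ is simply cancelled against $\partial_x$ at low output frequencies. As written, the proposal would not close without these corrections.
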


\begin{proof}
Let $ \tilde{w} =\rho_T(w)$  and $  \tilde{z} =\rho_T(z)$ be the extensions defined in \eqref{defrho} and let $\tilde{\tilde{w}}$ satisfying   \eqref{eqdiff} with $\partial_x(\tilde{z}\tilde{w})$ as second hand member. 
We will estimate the extension $\breve{w}=\eta \tilde{\tilde{w}} $ of $ w $ where $ \eta $ is the smooth cut-off function defined in \eqref{eta}. 
To simplify the notation we drop the tilde in the sequel.
 For $ N_0\ge 1 $ to be chosen later, we rewrite $ zw $ as
\begin{eqnarray}
zw & = & P_{\le N_0} (zw) +\sum_{N>N_0} \Bigl[ z_{\lesssim N} w_{\sim N} + z_{\sim N} w_{\lesssim N} +\sum_{N_1\gg N} 
 z_{N_1} w_{\sim N_1} \Bigl]\nonumber  \\
 & =: &  J_{\le N_0}+J^{l,h}_{>N_0}+J^{h,l}_{>N_0}+ J_{>N_0}^{h,h}\; .\label{decomp}
\end{eqnarray}
Duhamel formula,  \eqref{Fprop}, as well as classical Bourgain's estimate on the linear evolution  (cf. \cite{Bo1}, \cite{G}) and \eqref{Fprop} lead to
\begin{align*}
  \|\breve{w}\|_{\overline{F}^{\sigma, \frac12}} & \lesssim \|w_0\|_{\overline{H}^{\sigma}} + \| \partial_x(zw)\|_{\overline{F}^{\sigma, -\frac12}} \\
  &  \lesssim \|w_0\|_{\overline{H}^{\sigma}} + \|J_{\le N_0}\|_{X^{\sigma,0}}+
  \|J^{l,h}_{>N_0}+J^{h,l}_{>N_0} \|_{X^{\sigma,0}} + \|J_{>N_0}^{h,h}\|_{F^{\sigma+1, -\frac12}}
\end{align*}
    Now, using that $0<-\sigma<s $,  we easily bound the contribution  of the low frequency part $ J_{\le N_0} $  by
 \begin{equation}\label{rr1}
    \|J_{\le N_0} \|_{X^{\sigma,0}} \lesssim  \sum_{N\le N_0} N^{\frac12}\cro{N}^{\sigma}  \| P_N (zw)\|_{L^\infty_t L^1_x}
    \lesssim  N_0^{\frac12}\|z\|_{L^\infty_T H^s_x} \|w\|_{L^\infty_T H^{\sigma}_x}\ .
    \end{equation}

 The contribution of the high-low interactions $ J_{>N_0}^{h,l} $ is also easily bounded as follows
\begin{align}
\|J_{> N_0}^{h,l}  \|_{X^{\sigma,0}}  &\lesssim \sum_{N>N_0} \cro{N}^{\sigma} \|z_{\sim N}\|_{L^2_tL^\infty_x} \|w_{\lesssim N}\|_{L^\infty_tL^2_x}\nonumber \\
    &\lesssim \sum_{N>N_0} \|z_{N}\|_{L^2_t L^\infty_x} \|w\|_{L^\infty_t H^\sigma_x} \nonumber\\
    & \lesssim  \|z\|_{Y^s} \|w\|_{L^\infty_t H^\sigma_x} \; ,\label{rr2}
\end{align}
  where in the next to the  last step we used that $ \sigma<0 $ yields $ \cro{N}^{\sigma}  \|w_{\lesssim N}\|_{L^\infty_tL^2_x}\lesssim  \|w\|_{L^\infty_t H^\sigma_x} $.
    To bound the contribution of the low-high  interactions $ J_{>N_0}^{l,h} $ we write
  \begin{align}
  \|J_{> N_0}^{l,h}  \|_{X^{\sigma,0}}
     &\lesssim  \Bigl\| \Bigl(\sum_{N>N_0} \cro{N}^{2\sigma} \|z_{\lesssim N} w_{\sim N}\|_{L^2_x}^2\Bigr)^{1/2}\Bigr\|_{L^2_t}\nonumber\\
    &\lesssim  \Bigl\| \Bigl(\sum_{N>N_0} \cro{N}^{2\sigma} \|z_{\lesssim N}\|_{L^\infty_x}^2\|w_{\sim N}\|_{L^2_x}^2\Bigr)^{1/2}\Bigr\|_{L^2_t}\nonumber\\
    &\lesssim \Bigl\| \|z\|_{L^\infty_x } \|w\|_{H^{\sigma}_x}\Bigr\|_{L^2_t} \nonumber\\
    & \lesssim \|z\|_{L^2_T L^\infty_x} \|w\|_{L^\infty_T H^{\sigma}_x}\, . \label{rr3}
  \end{align}
  Now we deal with the (high-high) interactions term 
  $$
  \|J_{>N_0}^{h,h}\|_{F^{\sigma+1, -\frac12}} \lesssim \sum_{N>N_0 \atop N_1 \gg N} \left\| \sum_{L_{max}\gtrsim NN_1^{\alpha}} P_NQ_L(Q_{L_1}z_{N_1} Q_{L_2}w_{\sim N_1})\right\|_{F^{\sigma+1, -\frac12}}.
  $$  
 To estimate the contribution of  the sum over $L\gtrsim NN_1^\alpha$, we take advantage of the $X^{\sigma+1,(-\frac 12)_+}$-part of $F^{\sigma+1,-\frac 12}$. Therefore this term is bounded by
 \begin{align}
   &\sum_{N>N_0 \atop N_1 \gg N} \sum_{L\gtrsim NN_1^{\alpha}} \|P_NQ_L(z_{N_1} w_{\sim N_1})\|_{X^{\sigma+1,(-\frac 12)_+}} \nonumber \\
      &\quad\lesssim \sum_{N>N_0 \atop N_1 \gg N} \sum_{L\gtrsim NN_1^\alpha} N^{1+\sigma} L^{(-\frac12)_+} \|P_NQ_L(z_{N_1} w_{\sim N_1})\|_{L^2_{tx}}\nonumber\\
    &\quad \lesssim  \sum_{N>N_0} N^{\sigma+(\frac12)_+} \sum_{N_1\gg N}  N_1^{(-\frac \alpha 2)_+}
   \|z_{N_1}\|_{L^2_tL^\infty_x} \|w_{\sim N_1}\|_{L^\infty_t L^2_x}\nonumber\\
      &\quad \lesssim  \sum_{N>N_0} N^{\sigma+(\frac12)_+} \sum_{N_1\gg N}  N_1^{(-\frac \alpha 2-\sigma)_+}
    N_1^{(s_\alpha-s)+(\alpha-1)_+} \|z_{N_1}\|_{Y^s} \|w_{\sim N_1}\|_{L^\infty_t H^\sigma_x}\nonumber\\
    &\quad \lesssim  \sum_{N>N_0} N^{(s_\alpha-s)+\frac 1 2 (\alpha-1)_+} \|z\|_{Y^s} \|w\|_{L^\infty_tH^\sigma_x}\nonumber \\
     & \quad \lesssim \|z\|_{Y^s} \|w\|_{L^\infty_tH^\sigma_x} \; ,\label{rr4}
  \end{align}
  where we used that $-\frac \alpha 2-\sigma+(s_\alpha-s)+(\alpha-1)_+\le  -\sigma-(1-\frac \alpha 2) <0 $  since
   $ 0<-\sigma< \frac 1  2 -\frac \alpha 4 $.  The contribution of the region $L\ll NN_1^\alpha$ and $L_1\gtrsim NN_1^\alpha$ is estimated by
  \begin{align}
  \sum_{N>N_0 \atop N_1 \gg N} \|Q_{\ll N N_1^\alpha}& (Q_{\gtrsim  N N_1^\alpha}z_{N_1} w_{\sim N_1})\|_{X^{\sigma,0}}  \nonumber   \\ &\quad \lesssim
   \sum_{N>N_0}\sum_{N_1\gg N} N^{\sigma+\frac12} \|Q_{\gtrsim NN_1^\alpha}z_{N_1} w_{\sim N_1}\|_{L^2_tL^1_x}\nonumber\\
    &\quad \lesssim\sum_{N>N_0} \sum_{N_1\gg N} N^{\sigma+\frac12} (NN_1^\alpha)^{-1} \|z_{N_1}\|_{X^{0,1}} \|w_{\sim N_1}\|_{L^\infty_tL^2_x}\nonumber\\
    &\quad \lesssim \sum_{N>N_0} N^{\sigma-\frac12} \sum_{N_1\gg N}N_1^{1-\alpha-s-\sigma} \|z\|_{X^{s-1,1}} \|w\|_{L^\infty_tH^\sigma_x} \nonumber\\
     &\quad \lesssim \sum_{N>N_0} N^{\frac 1 2 -\alpha -s }\|z\|_{X^{s-1,1}} \|w\|_{L^\infty_tH^\sigma_x}\nonumber \\
        &\quad  \lesssim N_0^{-1+\frac \alpha 4 }\|z\|_{X^{s-1,1}} \|w\|_{L^\infty_tH^\sigma_x},\label{rr5}
  \end{align}
  where we used that $-\alpha-s+1-\sigma<s_\alpha-s<0 $ to sum over $ N_1 $.
   Finally the contribution of the last region can be bounded thanks to Lemmas \ref{QLbound} and \ref{eqFsb} by
  \begin{align}
   \sum_{N>N_0 \atop N_1 \gg N} \|Q_{\ll N N_1^\alpha} &  (Q_{\ll  N N_1^\alpha}z_{N_1}  \; Q_{\gtrsim N N_1^\alpha}   w_{\sim N_1})\|_{X^{\sigma,0}} \nonumber  \\
    &\quad \lesssim \sum_{N>N_0 \atop N_1 \gg N} N^{\sigma+\frac12} \|Q_{\ll NN_1^\alpha}z_{N_1}  Q_{\gtrsim NN_1^\alpha}w_{\sim N_1}\|_{L^2_tL^1_x}\nonumber\\
    &\quad \lesssim \sum_{N>N_0 \atop N_1 \gg N} N^{\sigma+\frac12} (NN_1^\alpha)^{-1}N_1 \|z_{N_1}\|_{L^\infty_tL^2_x} \|w_{\sim N_1}\|_{F^{0,\frac12}}\nonumber\\
    &\quad \lesssim \sum_{N>N_0} N^{\sigma-\frac12} \sum_{N_1\gg N}N_1^{1-\alpha-s-\sigma}\|z\|_{L^\infty_tH^s_x} \|w\|_{F^{ \sigma,\frac12}} \nonumber\\
     &\quad \lesssim \sum_{N>N_0} N^{\frac 1 2 -\alpha -s }\|z\|_{L^\infty_t H^s_x} \|w\|_{F^{\sigma,\frac12}} \nonumber\\
        &\quad  \lesssim N_0^{-1+\frac \alpha 4 }\|z\|_{L^\infty_t H^s_x} \|w\|_{F^{\sigma,\frac12}},\label{rr6}
  \end{align}
  where we used that $(NN_1^{\alpha})^{-\frac12} \le (NN_1^{\alpha})^{-1}N_1$, since $\alpha \le 1$.
  Gathering \eqref{rr1}-\eqref{rr6} we obtain
  $$
     \|\breve{w}\|_{\overline{F}^{\sigma, \frac12}} \le  c_1 \|w_0\|_{\overline{H}^{\sigma}} +c_2 (N_0^{\frac12} +1) \|z\|_{Y^s_T} \|w\|_{L^\infty_T H^\sigma_x}
   +  c_3  N_0^{-\frac12}   \|z\|_{Y^s_T} \|w\|_{F_T^{\sigma, \frac12}}\, ,
$$
where $ c_1, \, c_2\,,  c_3 \ge 1 $. This yields the desired result by taking $ N_0=[2c_3 (1+\|z\|_{Y^s_T} )]^2$ and  concludes the proof of Proposition \ref{bediff}.
\end{proof}

\subsection{Refined Strichartz estimate}

\begin{proposition}\label{sediff}
Let $0<\alpha \le 1.$  Assume that $0<T\le 1$, $s>s_\alpha$ and  $ -\frac 1 2 +\frac \alpha 4< \sigma<\min (0,s-2+\frac{3}{2} \alpha) $. Let $z\in Y^s_T$ and $w\in \overline{Z}^\sigma_T$ be a solution of \eqref{eqdiff} on $]0,T[$. Then
\begin{equation}\label{sediff.0}
\|J^{(\sigma-s_\alpha)+(1-\alpha)_-}_x w\|_{L^2_T \overline{L^\infty_x}} \lesssim (1+ \|z\|_{Y^s_T} ) \|w\|_{L^\infty_T \overline{H}^\sigma_x}\;  .
\end{equation}
\end{proposition}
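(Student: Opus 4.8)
The plan is to mimic the proof of Proposition \ref{propse}, but working with the negative-regularity weighted spaces adapted to the difference equation \eqref{eqdiff}. First I would take the extensions $\tilde w=\rho_T(w)$, $\tilde z=\rho_T(z)$ provided by \eqref{defrho} and drop the tildes; by the mapping properties of $\rho_T$ recorded in the proof of Corollary \ref{coro1} it suffices to prove the estimate on $\R^2$ with $\R$ in place of $]0,T[$ and an extra controlled loss at $t=0,T$ handled exactly as there. The low-frequency part $P_{\le 1}$ is immediate from Bernstein's inequality: since $\overline{H}^\sigma$ puts a weight $\cro{|\xi|^{-1}}$ on low frequencies,
$$
\|P_{\le 1}J_x^{(\sigma-s_\alpha)+(1-\alpha)_-}w\|_{L^2_T\overline{L^\infty_x}} \lesssim T^{\frac12}\|w\|_{L^\infty_T\overline{L^2_x}}\lesssim T^{\frac12}\|w\|_{L^\infty_T\overline H^\sigma_x}.
$$
For the high frequencies $P_{>1}$, apply Lemma \ref{se} with $F=\partial_x(zw)$ and $\delta=1$, then sum over dyadic $N>1$. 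Writing $\theta=(\sigma-s_\alpha)+(1-\alpha)_-$, this yields
$$
\|P_{>1}J_x^\theta w\|_{L^2_TL^\infty_x}\lesssim T^{\kappa_1}\|J^{\sigma}_x w\|_{L^\infty_TL^2_x}+T^{\kappa_2}\bigl\|J^{\sigma-1+(1-\alpha)_-}_x\partial_x(zw)\bigr\|_{L^2_{T,x}}\lesssim T^{\kappa_1}\|w\|_{L^\infty_TH^\sigma_x}+T^{\kappa_2}\|J^\sigma_x(zw)\|_{L^2_{T,x}},
$$
using $\sigma-1+(1-\alpha)_-+1\le \sigma$ when $\alpha\le 1$ (so the extra derivative from $\partial_x$ is absorbed, up to an $\varepsilon$, into the $-3\delta/4$ smoothing gain — this is the same bookkeeping as in Proposition \ref{propse}).

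The crux is therefore the bilinear term $\|J^\sigma_x(zw)\|_{L^2_{T,x}}$, and here the analysis must be split by frequency interaction because $\sigma<0$ and the product is no longer controlled by a Banach-algebra argument. I would perform the paraproduct decomposition $zw=P_{\le N_0}(zw)+J^{l,h}_{>N_0}+J^{h,l}_{>N_0}+J^{h,h}_{>N_0}$ exactly as in the proof of Proposition \ref{bediff}. For the low-frequency and high-low pieces one uses $\|z_N\|_{L^2_TL^\infty_x}$ controlled by $\|z\|_{Y^s_T}$ together with $\sigma<0$ to absorb the Littlewood-Paley weights on $w$, just as in \eqref{rr1}, \eqref{rr2}. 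For the low-high piece $J^{l,h}$ one puts $z$ in $L^2_TL^\infty_x$ and $w$ in $L^\infty_TH^\sigma_x$ as in \eqref{rr3}. The genuinely new point is the high-high piece: here both factors are at frequency $\sim N_1\gg N_0$ while the output frequency is $N\le N_1$, so $J^\sigma_x$ contributes only $\cro N^\sigma$, which is harmless, but one must pay a derivative to land $\partial_x(zw)$ in $L^2_{x}$ rather than $L^1_x$. The resolution is the same as in \eqref{rr4}–\eqref{rr6}: decompose in the modulation variables, use the resonance relation $L_{\max}\gtrsim NN_1^\alpha$ from Lemma \ref{res2}, and either invoke the $L^2$ gain of the $X^{s-1,1}$-part of $z$ (respectively the $F^{\sigma,\frac12}$-part of $w$) when the high modulation falls on $z$ or $w$, or else use $\|z_{N_1}\|_{L^2_TL^\infty_x}\lesssim N_1^{(s_\alpha-s)+(1-\alpha)_+}\|z\|_{Y^s}$. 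The condition $-\tfrac12+\tfrac\alpha4<\sigma$ guarantees $-\sigma<\tfrac12-\tfrac\alpha4$, which is exactly what makes the sums over $N_1\gg N>N_0$ converge (and produces a negative power of $N_0$ on the $\|w\|_{F^{\sigma,\frac12}_T}$ contribution, consistent with the statement, the $N_0$ being the one fixed in Proposition \ref{bediff}).

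Assembling: the low-frequency and high-low/low-high pieces give $\lesssim (1+\|z\|_{Y^s_T})\|z\|_{Y^s_T}\|w\|_{L^\infty_TH^\sigma_x}$, and the high-high piece gives $\lesssim (1+\|z\|_{Y^s_T})\|z\|_{Y^s_T}(\|w\|_{L^\infty_TH^\sigma_x}+\|w\|_{F^{\sigma,\frac12}_T})$; combined with the $T^{\kappa_1}\|w\|_{L^\infty_TH^\sigma}$ term and the low-frequency Bernstein bound, and using $0<T\le1$ to drop the time factors, this is \eqref{sediff.0}. The main obstacle is the high-high term: one must be careful that the single spatial derivative in $\partial_x(zw)$ together with the $L^2\to L^1$ Hölder loss is fully compensated by the modulation/resonance gain for \emph{every} placement of $L_{\max}$, and that the residual $N_1$-sum converges — this is precisely where the lower bound $\sigma>-\tfrac12+\tfrac\alpha4$ is used, and where the argument parallels \eqref{rr4}–\eqref{rr6} most closely.
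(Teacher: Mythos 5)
Your overall architecture (Bernstein for $P_{\le 1}$, Lemma \ref{se} with $\delta=1$ for the high frequencies, then a paraproduct/modulation analysis of the nonlinearity parallel to Proposition \ref{bediff}) is close in spirit to the paper, and your bookkeeping of the weights ($N^{(\sigma-s_\alpha)+(1-\alpha)_-}$ against the gains $-(\alpha-1)/4+\delta/4$ and $-(\alpha-1)/4-3\delta/4$) is correct. But there is a genuine gap in the reduction: by applying Lemma \ref{se} to $w_N$ itself with $F=P_N\partial_x(zw)$, you are forced to bound $\|J^\sigma_x(zw)\|_{L^2_{T,x}}$, i.e.\ the full $X^{\sigma,0}$-type norm of $zw$ over \emph{all} output modulations. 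This is strictly stronger than what the machinery of Proposition \ref{bediff} delivers. Indeed, in the high-high piece $P_N(z_{N_1}w_{\sim N_1})$, $N_1\gg N$, the resonance identity forces the dominant modulation onto the \emph{output} exactly when both factors have low modulation, and in that regime there is no factor $L^{(-\frac12)_+}$ available in $L^2_{t,x}$: the gain in \eqref{rr4} comes precisely from measuring the output in the $X^{\sigma+1,(-\frac12)_+}$ component of $F^{\sigma+1,-\frac12}$, which your reduction has discarded. The remaining crude bounds fail in part of the admissible range: H\"older with $\|z_{N_1}\|_{L^2_TL^\infty_x}\lesssim N_1^{-(s-s_\alpha)-(1-\alpha)_-}\|z\|_{Y^s}$ gives an $N_1$-exponent $-(s-s_\alpha)-(1-\alpha)_--\sigma$ which is positive e.g.\ for $\alpha=1$, $s$ slightly above $\tfrac14$ and $\sigma$ near $-\tfrac14$, so the sum over $N_1\gg N$ diverges, while the alternative $L^2_TL^1_x$/Bernstein bound leaves a factor $N^{\frac12-s}$ that does not sum over $N$ when $s<\tfrac12$. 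So the step ``the high-high piece of $\|J^\sigma_x(zw)\|_{L^2_{T,x}}$ is handled as in \eqref{rr4}--\eqref{rr6}'' does not go through for every placement of $L_{\max}$.

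The paper's proof avoids this by splitting $w_N$ in modulation \emph{before} invoking the refined Strichartz estimate. The piece $Q_{\gtrsim N^{\alpha+1}}w_N$ is estimated directly in $L^2_TL^\infty_x$ via Bernstein and \eqref{eqFsb.0}, producing the decay $N^{-\frac{3\alpha}4}$ and consuming only $\|w\|_{F^{\sigma,\frac12}}$ (this is \eqref{dd2}, and it is the source of the $\|w\|_{F^{\sigma,\frac12}_T}$ term in \eqref{sediff.0}). Lemma \ref{se} is then applied only to $Q_{\ll N^{\alpha+1}}w_N$, which solves \eqref{se.0} with forcing $Q_{\ll N^{\alpha+1}}P_N\partial_x(zw)$; the retained modulation cutoff is what permits converting the $L^2_{T,x}$ norm of the forcing into $N\|P_N(zw)\|_{F^{\sigma,-\frac12}_T}$ (the display after \eqref{dd1}), i.e.\ into the \emph{weak} dual-type norm for which the estimates \eqref{rr1}--\eqref{rr6} of Proposition \ref{bediff} were designed. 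To repair your argument you need exactly this intermediate modulation decomposition (with threshold $N^{\alpha+1}$) and the passage to $F^{\sigma,-\frac12}$; as written, your reduction to $\|J^\sigma_x(zw)\|_{L^2_{T,x}}$ overshoots and the high-high/high-output-modulation case is left uncontrolled.
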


\begin{proof}
The low frequency part is estimated by
  \begin{align*}
  \|P_{\lesssim 1} J^{1-\alpha+(\sigma-s_\alpha)}_x w\|_{L^2_T\overline{L^\infty_x} } & \lesssim T^{\frac12} \sum_{N\lesssim 1}  \cro{N^{-1}}N^{\frac12} \|w_N\|_{L^\infty_TL^2_x}\\
  & \lesssim T^{\frac12} \|w\|_{L^\infty_T \overline{H}^\sigma_x}.
  \end{align*}
To estimate the high frequency part of  the LHS of \eqref{sediff.0}, we decompose $zw $ as in \eqref{decomp} and we use Lemma \ref{se} with $ \delta=1 $ to get
  \begin{align}
  N^{(\sigma-s_\alpha)+(1-\alpha)_-} \|w_N \|_{L^2_TL^\infty_x} &\lesssim  N^{0-} \|w_N\|_{L^\infty_TH^\sigma_x} +
   N^{\sigma_-} \|z_{\lesssim N} w_{\sim N} \|_{L^2_{T,x}} \nonumber \\
   & + N^{\frac{1}{2}-\frac{\alpha}{4}+\sigma_-}\Bigl(  \|z_{\sim N} w_{\lesssim N} \|_{L^2_T L^1_x} + \sum_{N_1\gg N} \| z_{N_1} w_{\sim N_1} \|_{L^2_T L^1_x}\Bigr)\nonumber \\
   & \lesssim  N^{0-} \|w_N\|_{L^\infty_TH^\sigma_x} +N^{0_-} \|z\|_{L^2_T L^\infty_x} \|w\|_{L^\infty_T H^{\sigma}_x}\nonumber \\
   & +N^{\frac{1}{2}-\frac{\alpha}{4}+\sigma_-} N^{-s-\sigma} \|z\|_{L^\infty_T H^s_x} \|w\|_{L^\infty_T H^{\sigma}_x} 
  \end{align}
  where we used that $\sigma<0 $ and $ s+\sigma>0$.  Summing over $ N\gg 1$,  using that $ s>s_\alpha\ge \frac{1}{2}-\frac{\alpha}{4} $, \eqref{sediff.0} follows.
\end{proof}
   
\begin{corollary}\label{coro2}
Let $0<\alpha \le 1.$ Assume that $0<T\le 1$, $s>s_\alpha$ and  $ -\frac 1 2 +\frac \alpha 4< \sigma<\min (0,s-2+\frac{3}{2} \alpha) $. Let $z\in Y^s_T$ and $w\in \overline{Z}^\sigma_T$ be a solution of \eqref{eqdiff} on $]0,T[$ such that $ w_0\in \overline{H}^\sigma $. Then
  \begin{equation}\label{co2}
    \|w\|_{\overline{Z}^{\sigma}_T} \lesssim (1+\|z\|_{Y^s_T} )^2\Bigl( \|w_0\|_{ \overline{H}^\sigma}+ \|w\|_{L^\infty_T \overline{H}^\sigma_x}\Bigr) \, .
  \end{equation}
\end{corollary}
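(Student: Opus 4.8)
The plan is to read off \eqref{co2} directly from the two estimates already proved in this section, since by definition
$$
\|w\|_{\overline{Z}^\sigma_T}=\|w\|_{L^\infty_T\overline{H}^\sigma_x}+\|w\|_{\overline{F}^{\sigma,\frac12}_T}+\|J^{(\sigma-s_\alpha)+(1-\alpha)_-}_xw\|_{L^2_T\overline{L^\infty_x}},
$$
so it suffices to bound each of the three summands. First I would note that the non-homogeneous low-frequency multiplier $\cro{|\xi|^{-1}}$ is bounded below by $1$, hence every unbarred norm is controlled by its barred counterpart; in particular $\|w\|_{L^\infty_TH^\sigma_x}\le\|w\|_{L^\infty_T\overline{H}^\sigma_x}$ and $\|w\|_{F^{\sigma,\frac12}_T}\le\|w\|_{\overline{F}^{\sigma,\frac12}_T}$. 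The first summand $\|w\|_{L^\infty_T\overline{H}^\sigma_x}$ is already one of the quantities appearing on the right-hand side of \eqref{co2}, so there is nothing to do for it.

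Next I would apply Proposition \ref{bediff}, whose hypotheses ($0<T\le1$, $s>s_\alpha$, $\sigma$ in the prescribed range, $z\in Y^s_T$, $w\in\overline{Z}^\sigma_T$ a solution of \eqref{eqdiff}, $w_0\in\overline{H}^\sigma$) are exactly those of the corollary, to get
$$
\|w\|_{\overline{F}^{\sigma,\frac12}_T}\lesssim\|w_0\|_{\overline{H}^\sigma}+(1+\|z\|_{Y^s_T})\|z\|_{Y^s_T}\|w\|_{L^\infty_TH^\sigma}\le\|w_0\|_{\overline{H}^\sigma}+(1+\|z\|_{Y^s_T})^2\|w\|_{L^\infty_T\overline{H}^\sigma_x}.
$$
Then I would invoke Proposition \ref{sediff}, which gives
$$
\|J^{(\sigma-s_\alpha)+(1-\alpha)_-}_xw\|_{L^2_T\overline{L^\infty_x}}\lesssim\|w\|_{L^\infty_T\overline{H}^\sigma_x}+(1+\|z\|_{Y^s_T})\bigl(\|w\|_{L^\infty_TH^\sigma_x}+\|w\|_{F^{\sigma,\frac12}_T}\bigr),
$$
and substitute into it the previous bound for $\|w\|_{F^{\sigma,\frac12}_T}\le\|w\|_{\overline{F}^{\sigma,\frac12}_T}$; this introduces one more factor $(1+\|z\|_{Y^s_T})$ on top of the $(1+\|z\|_{Y^s_T})^2$, i.e. a total of $(1+\|z\|_{Y^s_T})^3$. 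Adding the three contributions and using $1\le(1+\|z\|_{Y^s_T})$ to replace all prefactors by $(1+\|z\|_{Y^s_T})^3$ yields \eqref{co2}.

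I do not expect a real obstacle here: all the analytic content has been placed in Propositions \ref{bediff} and \ref{sediff}, and the corollary is just their sum. The only points to be slightly careful about are (i) that the weight $\cro{|\xi|^{-1}}$ commutes with every operator involved and only ever helps, so moving between barred and unbarred norms is harmless in the direction we use; and (ii) the bookkeeping of the powers of $(1+\|z\|_{Y^s_T})$. Note in particular that the estimate is linear in $w$ and involves no self-referential term (Proposition \ref{bediff} does not see the $L^2_T\overline{L^\infty_x}$ norm, and the $F^{\sigma,\frac12}$ norm appearing in Proposition \ref{sediff} is controlled beforehand), so no smallness of $T$ and no absorption argument is needed.
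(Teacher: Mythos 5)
Your route is the paper's route: Corollary \ref{coro2} is obtained by adding up Propositions \ref{bediff} and \ref{sediff}, substituting the bound for the $F^{\sigma,\frac12}$ part into the Strichartz-type estimate, and keeping track of the powers of $(1+\|z\|_{Y^s_T})$ exactly as you do; passing from unbarred to barred norms in the direction you use is indeed harmless since $\cro{|\xi|^{-1}}\ge 1$.

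The one step that is not correct as stated is your opening claim that, \emph{by definition}, $\|w\|_{\overline{Z}^{\sigma}_T}$ equals the sum of the three time-restricted component norms. The restriction norm $\|\cdot\|_{M_T}$ is an infimum over extensions of the full space-time norm, so the definition only gives the reverse inequality: each of the three component restriction norms is bounded by $\|w\|_{\overline{Z}^{\sigma}_T}$. To bound $\|w\|_{\overline{Z}^{\sigma}_T}$ by the sum of the three restricted norms (which is what the corollary needs) one must produce a \emph{single} extension of $w$ whose $L^\infty_t\overline{H}^\sigma_x$, $\overline{F}^{\sigma,\frac12}$ and weighted $L^2_t\overline{L^\infty_x}$ norms are simultaneously controlled by the corresponding restricted norms of $w$. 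This is precisely what the paper does: it takes $\tilde w=\rho_T(w)$ with $\rho_T$ defined in \eqref{defrho} and uses the bound \eqref{coco2}, which rests on the mapping properties \eqref{tg1}--\eqref{tg3} of $\rho_T$ established in the proof of Corollary \ref{coro1} (adapted to the barred spaces). Once this extension step is supplied, the remainder of your argument — no absorption, no smallness of $T$, total power $(1+\|z\|_{Y^s_T})^3$ — coincides with the paper's proof.
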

\begin{proof} By the property of the extension $ \tilde{w}=\rho_T (w) $ defined in \eqref{defrho} we have
 \begin{equation}\label{coco2}
   \|\tilde{w}\|_{\overline{Z}^{\sigma}}\lesssim \|w\|_{L^\infty_T \overline{H}^\sigma_x}+\|w\|_{\overline{F}^{\sigma}_T}+ \|J^{(\sigma-s_\alpha)+(1-\alpha)_-}_x w\|_{L^2_T \overline{L^\infty_x}}
  \end{equation}
and the result follows by gathering this last estimate with \eqref{bediff0} and \eqref{sediff.0}.
\end{proof}
\subsection{Energy estimate}
For $ N_0 \ge  1 $, we define the modified energy for the diffe-rence $w$ of two solutions $ u$ and $ v $ by
\begin{equation} \label{defENt}
  \widetilde{\mathcal{E}}_N(z,w,N_0) = \left\{ \begin{array}{ll} \frac 12  \|P_Nw\|_{L^2_x}^2 & \text{for} \ N \le N_0 \, \\
\frac 12 \|P_Nw\|_{L^2_x}^2 + \widetilde{c_1} \widetilde{\mathcal{E}}_N^{1}(z,w) +  \widetilde{c_2} \widetilde{\mathcal{E}}_N^{2}(z,w) & \text{for} \ N> N_0 \, , \end{array}\right.
\end{equation}
where
$$
\widetilde{\mathcal{E}}_N^1(z,w) = \int_{\R^2} \left(\frac{\widetilde{\chi_1}}{\Omega_2}\right)(\xi_1,\xi_2) \xi_1 \widehat{z_{\ll N}}(\xi_1) \widehat{P_{\sim N}w}(\xi_2)\widehat{P_{\sim N}w}(-\xi_1-\xi_2) d\xi_1d\xi_2,
$$
and
$$
\widetilde{\mathcal{E}}_N^2(z,w) = \int_{\R^2} \left(\frac{\widetilde{\chi_2}}{\Omega_2}\right) (\xi_1,\xi_2) (\xi_1+\xi_2) \widehat{w_{\ll N}}(\xi_1) \widehat{P_{\sim N}z}(\xi_2)\widehat{P_{\sim N}w}(-\xi_1-\xi_2) d\xi_1d\xi_2 \, ,
$$
{$\Omega_2$ is defined in \eqref{Omega2}, $\tilde{\chi}_1$, $\tilde{\chi}_2$ are symbols satisfying the Marcinkiewicz condition \eqref{pseudoprod.1} and defined later in the proof of Proposition \ref{prop-eed}, and $\widetilde{c_1}$, $\widetilde{c_2}$ are real constants that will be fixed later in the proof of Proposition \ref{prop-eed}.

We define the modified energy  at the $ H^\sigma$-regularity associated with the difference of two solutions by using a homogeneous dyadic decomposition in spatial frequency
\begin{equation}\label{def-EsTt}
\widetilde{E}^{\sigma}(z,w,N_0) =  \sum_{N>0} \cro{N^{-1}}^2 \cro{N}^{2\sigma} \big|\widetilde{\mathcal{E}}_N(z,w,N_0)\big| \, .
\end{equation}

\begin{lemma}[Coercivity of the modified energy]\label{lem-EsTt}
Let $0<\alpha \le 1$, $s > s_\alpha$, $0<T \le 1$ and  $ -\frac 1 2 +\frac \alpha 4	< \sigma<\min (0,s-2+\frac{3}{2} \alpha) $.  Let $z\in Y^s_T$ and $w\in \overline{Z}^{\sigma}_T$ be a solution of \eqref{eqdiff}. Then  for $N_0 \gg   (1+\|z\|_{H^s_x})^\frac{2}{\alpha} $  it holds
\begin{equation} \label{lem-Estt.1}
\Bigl|  \widetilde{E}^\sigma(z,w,N_0) -  \frac{1}{2}  \sum_{N>0 } \cro{N}^{2\sigma} \cro{N^{-1}}^2 \|P_Nw\|_{L^2_x}^2
 \Bigr| \le \frac{1}{8}   \sum_{N>N_0 } \cro{N}^{2\sigma} \cro{N^{-1}}^2 \|P_Nw\|_{L^2_x}^2 \; .
 \end{equation}
\end{lemma}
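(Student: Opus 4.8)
The strategy is essentially the same as in the proof of Lemma \ref{lem-EsT}: reduce the estimate, via the triangle inequality, to a pointwise bound on the cubic correctors $\widetilde{\mathcal{E}}_N^1$ and $\widetilde{\mathcal{E}}_N^2$, and show that for $N>N_0$ each of them is controlled by a negative power of $N$ (or of $N_0$) times the relevant square norms of $z$ and $w$, with a gain that can be made $\le \frac18$ by taking $N_0$ large depending on $\|z\|_{H^s}$. First I would write, from \eqref{def-EsTt} and the triangle inequality,
$$
\Bigl| \widetilde{E}^\sigma(z,w,N_0)-\frac12\sum_{N>0}\cro{N}^{2\sigma}\cro{N^{-1}}^2\|P_Nw\|_{L^2_x}^2\Bigr|
\lesssim \sum_{N>N_0}\cro{N}^{2\sigma}\cro{N^{-1}}^2\bigl(|\widetilde{\mathcal{E}}_N^1(z,w)|+|\widetilde{\mathcal{E}}_N^2(z,w)|\bigr),
$$
and then estimate each corrector separately. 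Since $N>N_0\gg1$ we may drop the $\cro{N^{-1}}^2$ factors, which are $\sim1$.

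For $\widetilde{\mathcal{E}}_N^1$: the integrand involves $(\widetilde{\chi}_1/\Omega_2)(\xi_1,\xi_2)\xi_1\,\widehat{z_{\ll N}}(\xi_1)\widehat{P_{\sim N}w}(\xi_2)\widehat{P_{\sim N}w}(-\xi_1-\xi_2)$, with $|\xi_1|\sim N_1\ll N$ and $|\xi_2|\sim N$. By Lemma \ref{res2}, $|\Omega_2|\sim N_1 N^\alpha$ on this region, and $\widetilde{\chi}_1$ is bounded, so Young's and Bernstein's inequalities give (exactly as in \eqref{lem-Est.3})
$$
N^{2\sigma}|\widetilde{\mathcal{E}}_N^1(z,w)|
\lesssim \sum_{N_1\ll N} N^{2\sigma}(N_1N^\alpha)^{-1}N_1^{\frac32}\|z_{N_1}\|_{L^2_x}\|P_{\sim N}w\|_{L^2_x}^2
\lesssim N^{-\alpha}\|z\|_{H^{s_\alpha}_x}\,\|P_{\sim N}w\|_{H^\sigma_x}^2 ,
$$
after summing $\sum_{N_1\ll N}N_1^{\frac12-s_\alpha}\lesssim1$ (here $s_\alpha=\frac32-\frac{5\alpha}4>\frac12$ for $\alpha<\frac23$ but for $\alpha\le1$ one uses instead $\sum N_1^{\frac12}\|z_{N_1}\|_{L^2}\lesssim\|z\|_{H^{s_\alpha}}$ since $s_\alpha\le s$ and $s_\alpha\ge\frac12$ fails only mildly — in any case $N_1\ll N$ and the $N^{-\alpha}$ gain does the job, and one keeps the $\|z\|_{H^s}$-dependent constant). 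For $\widetilde{\mathcal{E}}_N^2$: the integrand is $(\widetilde{\chi}_2/\Omega_2)(\xi_1,\xi_2)(\xi_1+\xi_2)\widehat{w_{\ll N}}(\xi_1)\widehat{P_{\sim N}z}(\xi_2)\widehat{P_{\sim N}w}(-\xi_1-\xi_2)$ with $|\xi_1|\sim N_1\ll N$, $|\xi_2|\sim N$, $|\xi_1+\xi_2|\sim N$; again $|\Omega_2|\sim N_1N^\alpha$ and $\widetilde{\chi}_2$ bounded, so
$$
N^{2\sigma}|\widetilde{\mathcal{E}}_N^2(z,w)|
\lesssim \sum_{N_1\ll N} N^{2\sigma}(N_1N^\alpha)^{-1}N\,N_1^{\frac12}\|w_{N_1}\|_{L^2_x}\|P_{\sim N}z\|_{L^2_x}\|P_{\sim N}w\|_{L^2_x}.
$$
Here the key point is the low-frequency weight built into $\overline{Z}^\sigma$: using $\|w_{N_1}\|_{L^2_x}\lesssim \cro{N_1^{-1}}^{-1}\cro{N_1}^{-\sigma}\|w_{N_1}\|_{\overline{H}^\sigma_x}$ (so that $N_1^{\frac12}\|w_{N_1}\|_{L^2_x}\lesssim N_1^{\frac12}\cro{N_1}\cro{N_1}^{-\sigma}\|w\|_{\overline{H}^\sigma_x}$ for $N_1\lesssim1$ and $\lesssim N_1^{\frac12-\sigma}\|w\|_{H^\sigma}$ for $N_1\gtrsim1$), together with $N^{2\sigma}\cdot N^{1-\alpha}=N^{2\sigma+1-\alpha}$ and the bound $\|P_{\sim N}z\|_{L^2_x}\lesssim N^{-s}\|z\|_{H^s_x}$, gives a factor $N^{2\sigma+1-\alpha-s}$ times $\|z\|_{H^s}\|P_{\sim N}w\|_{H^\sigma}$ times $\sum_{N_1\ll N}N_1^{\max(\frac12-\sigma,\ \frac32)}$ summed against the low weight. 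Since $\sigma<s-2+\frac32\alpha$ we have $2\sigma+1-\alpha-s<2(s-2+\frac32\alpha)+1-\alpha-s=s-3+2\alpha<0$ (as $s\le\frac32-\frac{5\alpha}4+\text{small}$ is false in general, so one checks directly $s-3+2\alpha<0 \iff s<3-2\alpha$, which holds in the range of interest); more robustly, $2\sigma+1-\alpha-s<-\alpha$ since $\sigma<0$ and $1-s<0$ whenever $s>1$, and for $s\le1$ one uses the sharper $\sigma<s-2+\frac32\alpha$. In all cases the power of $N$ is strictly negative, with value $\le -\mu$ for some $\mu=\mu(\alpha,s,\sigma)>0$.

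Combining the two bounds, for $N>N_0$ we obtain
$$
\cro{N}^{2\sigma}\cro{N^{-1}}^2\bigl(|\widetilde{\mathcal{E}}_N^1|+|\widetilde{\mathcal{E}}_N^2|\bigr)
\lesssim N_0^{-\alpha}(1+\|z\|_{H^s_x})\Bigl(\cro{N}^{2\sigma}\cro{N^{-1}}^2\|P_{\sim N}w\|_{L^2_x}^2+\|w\|_{L^\infty_T\overline{H}^\sigma_x}^2\Bigr)
$$
wait — more precisely, both correctors are controlled by the square of an $\overline{H}^\sigma$-type norm of $P_{\sim N}w$ localized near $N$, times $N_0^{-\alpha}(1+\|z\|_{H^s})$; then I sum over $N>N_0$ using the almost-orthogonality $\sum_{N}\|P_{\sim N}w\|_{\overline{H}^\sigma_x}^2\sim\sum_N\cro{N}^{2\sigma}\cro{N^{-1}}^2\|P_Nw\|_{L^2_x}^2$ (the analogue of the fact used at the end of Lemma \ref{lem-EsT}), to get the right-hand side bounded by $C N_0^{-\alpha}(1+\|z\|_{H^s_x})\sum_{N>N_0}\cro{N}^{2\sigma}\cro{N^{-1}}^2\|P_Nw\|_{L^2_x}^2$. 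Finally, choosing $N_0\gg(1+\|z\|_{H^s_x})^{2/\alpha}$ makes the constant $\le\frac18$, which is \eqref{lem-Estt.1}.

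The main obstacle — and the reason this lemma is genuinely more delicate than Lemma \ref{lem-EsT} — is the second corrector $\widetilde{\mathcal{E}}_N^2$, which carries a full spatial derivative $(\xi_1+\xi_2)\sim N$ on the high frequency and only recovers $|\Omega_2|^{-1}\sim (N_1N^\alpha)^{-1}$, producing the dangerous factor $N^{1-\alpha}$. Absorbing this requires simultaneously exploiting (i) the smallness $\|P_{\sim N}z\|_{L^2}\lesssim N^{-s}\|z\|_{H^s}$ (which is why one must carry $z\in Y^s$ at high regularity and $w$ at low regularity $\sigma$), and (ii) the precise upper constraint $\sigma<\min(0,s-2+\frac32\alpha)$ on $\sigma$ from \eqref{sigma}, which is exactly what guarantees $2\sigma+1-\alpha-s<0$ so that the $N$-sum converges with a positive power of $N_0$ to spare. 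The low-frequency weight $\cro{N^{-1}}$ in $\overline{Z}^\sigma$ plays no essential role in the coercivity itself (it is harmless since we only sum over $N>N_0\gg1$), but it must be carried along consistently; checking that the $N_1\lesssim1$ part of the $\sum_{N_1\ll N}$ sum is handled by the $\cro{N_1^{-1}}$ weight hidden in $\|w\|_{\overline{H}^\sigma}$ is a routine but necessary bookkeeping step.
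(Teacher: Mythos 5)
Your proposal follows the paper's own proof essentially step by step: triangle inequality to reduce matters to the two cubic correctors, the resonance bound $|\Omega_2|\sim N_1N^{\alpha}$ of Lemma \ref{res2} combined with Young and Bernstein, the weighted norm $\overline{H}^{\sigma}$ to handle the low-frequency factor, the gain $\|z_{\sim N}\|_{L^2_x}\lesssim N^{-s}\|z\|_{H^s_x}$, the condition $\sigma<s-2+\tfrac32\alpha$ to make the power of $N$ negative, and the choice $N_0\gg(1+\|z\|_{H^s_x})^{2/\alpha}$ at the end. So the approach is the right one and matches the paper.

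That said, your bookkeeping for $\widetilde{\mathcal{E}}_N^2$ needs repair. In the prose you drop the factor $N_1^{-1}$ coming from $|\Omega_2|^{-1}\sim(N_1N^{\alpha})^{-1}$ (it is present in your displayed inequality): the $N_1$-summand you write, $N_1^{\max(\frac12-\sigma,\frac32)}$, has positive exponents, and summing it up to $N_1\ll N$ would create an extra positive power of $N$ and ruin the estimate. Restoring the $N_1^{-1}$, the summand is $N_1^{-\frac12}\cro{N_1^{-1}}^{-1}\cro{N_1}^{-\sigma}$, i.e.\ $\sim N_1^{\frac32}$ for $N_1\lesssim1$ (note $\cro{N_1^{-1}}^{-1}\sim N_1$ there, not $\cro{N_1}$) and $\sim N_1^{-\frac12-\sigma}$ for $N_1\gtrsim1$, whose convergence is exactly where the lower bound $\sigma>-\frac12+\frac{\alpha}4$ from \eqref{sigma} is used --- a point your write-up never invokes. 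Also, your exponent $N^{2\sigma+1-\alpha-s}$ paired with $\|P_{\sim N}w\|_{H^{\sigma}}$ double counts: converting $\|w_{\sim N}\|_{L^2_x}$ into $\|w_{\sim N}\|_{H^{\sigma}_x}$ costs $N^{-\sigma}$, so the correct power is $N^{\sigma+1-\alpha-s}\le N^{\frac{\alpha}2-1}$, which is the paper's bound \eqref{lem-Estt.4} and still suffices (similarly, for $\widetilde{\mathcal{E}}_N^1$ the exponent is $N^{-\alpha}+N^{\frac{\alpha}4-1}$ when $\alpha>\tfrac45$, which is harmless). Finally, the bound for $\widetilde{\mathcal{E}}_N^2$ is not, as you claim at the end, ``localized near $N$'' in $w$: it carries the full norm $\|w\|_{\overline{H}^{\sigma}_x}$ through $w_{\ll N}$, so summing over $N>N_0$ yields a term of the form $N_0^{\frac{\alpha}2-1}\|z\|_{H^s_x}\|w\|_{\overline{H}^{\sigma}_x}\bigl(\sum_{N\gtrsim N_0}\cro{N}^{2\sigma}\|P_Nw\|_{L^2_x}^2\bigr)^{1/2}$ rather than a small multiple of the high-frequency square sum alone; the paper's final gathering step performs the same absorption, so this is not a defect of your route specifically, but the localization claim should be removed.
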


\begin{proof}
We infer from (\ref{def-EsTt}) and the triangle inequality that, for $ N_0\gg 1$,
\begin{align}
\Bigl|  \widetilde{E}^\sigma(z,w,N_0) - & \frac{1}{2}  \sum_{N>0 } \cro{N}^{2\sigma} \cro{N^{-1}}^2 \|P_Nw\|_{L^2_x}^2
 \Bigr| \nonumber \\
 & \lesssim \sum_{N> N_0}N^{2\sigma} \big|\widetilde{\mathcal{E}}_N^{1}(z,w)\big|  + \sum_{N> N_0}N^{2\sigma} \big|\widetilde{\mathcal{E}}_N^{2}(z,w)\big|\, .\label{lem-Estt.2}
\end{align}
Thanks to Young and Bernstein's inequalities we have for $ N\ge N_0\gg 1  $,
\begin{equation} \label{lem-Estt.3}
\begin{split}
N^{2\sigma}\big|\widetilde{\mathcal{E}}_N^{1}(z,w)\big| &\lesssim \sum_{N_1\ll N} N^{2\sigma} (N_1N^\alpha)^{-1} N_1^{\frac12} \|\partial_xz_{N_1}\|_{L^2_x} \|w_{\sim N}\|_{L^2_x}^2 \\ &
\lesssim  (N^{-\alpha}+N^{\frac \alpha 4 -1})\|z\|_{H^{s}_x} \|w_{\sim N}\|_{H^\sigma_x}^2 \, .
\end{split}
\end{equation}
Similarly we bound the contribution of $\widetilde{\mathcal{E}}^2_N$ for $ N\ge  N_0\gg 1 $ by
\begin{align}
N^{2\sigma} \big|\widetilde{\mathcal{E}}^2_N(z,w)\big| &\lesssim \sum_{N_1\ll N} N^{\sigma-s+1-\alpha} N_1^{-\frac12} \cro{N_1^{-1}}^{-1} \cro{N_1}^{-\sigma} \nonumber\\
&\quad \times \|w_{N_1}\|_{\overline{H}^\sigma_x} \|z_{\sim N}\|_{H^s_x} \|w_{\sim N}\|_{H^\sigma_x} \nonumber\\
&\lesssim  N^{\frac \alpha 2 -1}\|w\|_{\overline{H}^\sigma_x} \|z_{\sim N}\|_{H^s_x} \|w_{\sim N}\|_{H^\sigma_x}. \label{lem-Estt.4}
\end{align}
Finally, we conclude the proof of \eqref{lem-Estt.1} gathering \eqref{lem-Estt.2}-\eqref{lem-Estt.3}-\eqref{lem-Estt.4} and the fact that
 $ \|w\|_{\overline{H}^\sigma_x}\sim\displaystyle  \sum_{N>0 } \cro{N}^{2\sigma} \cro{N^{-1}}^2 \|P_Nw\|_{L^2_x}^2$.
\end{proof}

\begin{proposition}\label{prop-eed}
 Let $0<\alpha\le 1$.  Let $s > s_\alpha$, $0<T \le 1$ and  $ -\frac 1 2 +\frac \alpha 4< \sigma<\min (0,s-2+\frac{3}{2} \alpha) $.  Let $u,v\in Y^s_T$ two solutions of \eqref{dpB} such that 
  $w=u-v\in \overline{Z}^{\sigma}_T$. Then, setting $z=u+v$, it holds
  \begin{eqnarray}\
\sup_{t\in ]0,T[}   \widetilde{E}^\sigma(z(t),w(t),N_0) &  \lesssim  & \widetilde{E}^\sigma(z(0),w(0),N_0)+ (T N_0^{\frac32} +N_0^{(s_\alpha-s)_+})\|z\|_{Y^s_T} \|w\|_{\overline{Z}^\sigma_T}^2  \nonumber \\
& & \hspace*{-15mm}+(N_0^{-(\frac{\alpha}2)+}+ N_0^{(s_\alpha-s)_+}+N_0^{-2\gamma+(\alpha-1)_+}) (\|u\|_{Y^s_T}^2+\|v\|_{Y^s_T}^2)\|w\|_{\overline{Z}^\sigma_T}^2 ,\label{prop-eed.0}
  \end{eqnarray}
  where $ \gamma=s-2+\frac{3}{2} \alpha-\sigma>0 $.
\end{proposition}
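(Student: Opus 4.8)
The proof follows the scheme of Proposition~\ref{prop-ee}, adapted to the non-symmetric equation \eqref{eqdiff} and to the negative-regularity weighted space $\overline{Z}^\sigma$; as there, one first replaces $z$ and $w$ by their extensions $\rho_T(z)$, $\rho_T(w)$ and then splits the sum defining $\widetilde{E}^\sigma$ according to $N\le N_0$ and $N>N_0$. Since $\omega_{\alpha+1}$ is real and odd, $L_{\alpha+1}$ contributes nothing to $\frac{d}{dt}\|P_Nw\|_{L^2_x}^2$, so for $N\le N_0$ one has $\frac{d}{dt}\widetilde{\mathcal{E}}_N(z,w)=\int_\R P_N\partial_x(zw)P_Nw\,dx$; integrating on $]0,t[$, applying H\"older and Bernstein and using $\sigma<0<s$ produces the initial data term together with $TN_0^{3/2}\|z\|_{Y^s_T}\|w\|_{\overline{Z}^\sigma_T}^2$.

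For $N>N_0$ I would write, as in \eqref{prop-ee.3},
$$\cro{N}^{2\sigma}\cro{N^{-1}}^2\widetilde{\mathcal{E}}_N(z(t),w(t))=\cro{N}^{2\sigma}\cro{N^{-1}}^2\widetilde{\mathcal{E}}_N(z(0),w(0))+\widetilde{\mathcal{J}}_N(t)+\widetilde{c_1}\widetilde{\mathcal{K}}_N^{(1)}(t)+\widetilde{c_2}\widetilde{\mathcal{K}}_N^{(2)}(t),$$
where $\widetilde{\mathcal{J}}_N(t)=\cro{N}^{2\sigma}\cro{N^{-1}}^2\int_{\R_t}P_N\partial_x(zw)P_Nw$ is the contribution of the nonlinearity to $\tfrac12\tfrac{d}{dt}\|P_Nw\|_{L^2_x}^2$ and $\widetilde{\mathcal{K}}_N^{(i)}(t)=\cro{N}^{2\sigma}\cro{N^{-1}}^2\int_0^t\tfrac{d}{dt'}\widetilde{\mathcal{E}}_N^i(z(t'),w(t'))\,dt'$. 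To compute $\widetilde{\mathcal{K}}_N^{(i)}$ one uses \eqref{eqdiff} for $w$ and the equation satisfied by $z=u+v$, whose nonlinearity is $\tfrac12\partial_x(z^2+w^2)$ (this is what will produce the $\|u\|_{Y^s_T}^2+\|v\|_{Y^s_T}^2$ dependence). Performing on $\widetilde{\mathcal{J}}_N$ the paraproduct decomposition
$$P_N(zw)=P_N(z_{\ll N}w_{\sim N})+P_N(z_{\sim N}w_{\ll N})+P_N(z_{\gtrsim N}w_{\gtrsim N})$$
and ``integrating by parts'' in the first two pieces (using the fundamental theorem of calculus to absorb the resulting commutators, exactly as in the definition \eqref{chi1} of $\chi_1$), one rewrites the low--high and high--low pieces as pseudoproducts with bounded symbols; the resonance relation in the form $\omega_{\alpha+1}(\xi_1)+\omega_{\alpha+1}(\xi_2)-\omega_{\alpha+1}(\xi_1+\xi_2)=-\Omega_2(\xi_1,\xi_2)$ shows that the contribution of the linear part of the equations to $\widetilde{\mathcal{K}}_N^{(i)}$ is, up to a constant, the same quadratic (resp.\ bilinear) form, and $\widetilde{\chi_1},\widetilde{\chi_2},\widetilde{c_1},\widetilde{c_2}$ are chosen so that these two dangerous cubic pieces cancel. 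The only remaining cubic term is then the high--high piece of $\widetilde{\mathcal{J}}_N$, which is estimated directly by Proposition~\ref{L2bilin}; its gain $N_{min}^{-\frac12-\frac\alpha4}N_{max}^{(1-\alpha)_+}$ is summable in the high--high dyadic parameter because $-\sigma<\frac12-\frac\alpha4$ (Remark~\ref{rk51}) and $s>s_\alpha$, and summing in $N>N_0$ yields $N_0^{(s_\alpha-s)_+}\|z\|_{Y^s_T}\|w\|_{\overline{Z}^\sigma_T}^2$.

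It remains to estimate the \emph{nonlinear} (quartic) contributions to $\widetilde{\mathcal{K}}_N^{(1)}$ and $\widetilde{\mathcal{K}}_N^{(2)}$. For these I would insert the nonlinearities $\partial_x(zw)$ and $\tfrac12\partial_x(z^2+w^2)$ into the three slots of $\widetilde{\mathcal{E}}_N^1,\widetilde{\mathcal{E}}_N^2$, perform the further paraproduct decomposition \eqref{prop-ee.8} of the $P_{\sim N}$-factors, and recognize each term as a $G^3_{t,\chi}$ whose symbol is a product of $\widetilde{\chi_i}$, of factors $N_j^{-1}\widetilde{\phi}_{N_j}(\xi_j)\xi_j$, and of $N_{\min}N_{\max}^\alpha/\Omega_2(\cdot,\cdot)$ restricted to the relevant frequency sector; the last factor satisfies \eqref{pseudoprod.1} by Lemma~\ref{omega.inv}, and products of such symbols still do by Remark~\ref{symbolprod}. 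Proposition~\ref{L2trilin} (in the spread regime $N_{min}\ll N_{thd}$, $N_{max}\gg1$) or Proposition~\ref{eL2trilin} then applies, and replacing $\|\cdot\|_{Z^0}$ by $\|\cdot\|_{Y^s}$ on the $z$-factors and by $\|\cdot\|_{\overline{Z}^\sigma}$ on the $w$-factors, one checks that for $\sigma$ in the interval \eqref{sigma} and $s>s_\alpha$ all the dyadic sums (over the low frequencies, over $N_1\gtrsim N$ in the high--high terms and over $N>N_0$) converge, producing exactly the gain $N_0^{-(\frac\alpha2)+}+N_0^{(s_\alpha-s)_+}+N_0^{-2\gamma+(\alpha-1)_+}$ of \eqref{prop-eed.0}; the last exponent, with $\gamma=s-2+\frac32\alpha-\sigma>0$, comes precisely from the strictness of $\sigma<s-2+\frac32\alpha$ and governs the worst low--high interaction carried by $\widetilde{\mathcal{E}}_N^2$, for which the $w_{\ll N}$ slot forces the use of the weighted space $\overline{H}^\sigma$.

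The main obstacle, as flagged in the introduction, is the quartic high--low interaction in which the extra spatial derivative produced by substituting the nonlinearity into a $P_{\sim N}$-slot of $\widetilde{\mathcal{E}}_N^i$ falls on the factor of highest spatial frequency: a crude bound then loses a factor $N_{min}^{-1}N_{max}^{1-\alpha}$, not summable for $\alpha<1$. As in the treatment of $\mathcal{K}_N^{31}$ in Proposition~\ref{prop-ee}, the cure is an algebraic cancellation --- here between one such term coming from the nonlinear part of $\widetilde{\mathcal{K}}_N^{(1)}$ and one coming from $\widetilde{\mathcal{K}}_N^{(2)}$, which is what dictates both the pairing of the two correction terms and the values of $\widetilde{c_1},\widetilde{c_2}$. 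After a change of variables the combined multiplier is a sum of three pieces, one of them a difference of the form $\big[(\widetilde{\chi_i}/\Omega_2)(\xi_1,\xi_1+\xi_3)-(\widetilde{\chi_i}/\Omega_2)(\xi_1,\xi_3)\big]\widetilde{\phi}_N(\xi_3)\xi_3$ and the other two carrying an explicit factor $\xi_j/N_j$; by the mean value theorem the difference gains an extra factor $\xi_2$ evaluated at an intermediate point of size $\sim N$, i.e.\ effectively $\xi_2/N$, so that multiplying by $N_{min}N_{max}^\alpha/\Omega_2$ the combined multiplier again satisfies \eqref{pseudoprod.1} and Proposition~\ref{L2trilin}/\ref{eL2trilin} applies with an acceptable loss. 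Collecting the low-frequency estimate with all the high-frequency contributions then yields \eqref{prop-eed.0}.
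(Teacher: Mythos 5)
Your outline follows the paper's own proof: same treatment of the low frequencies $N\le N_0$, same paraproduct splitting of $P_N(zw)$ into low--high, high--low and high--high pieces, same choice of $\widetilde{c_1},\widetilde{c_2}$ so that the linear contributions of the two cubic corrections cancel $\widetilde{\mathcal{J}}_N^1$ and $\widetilde{\mathcal{J}}_N^2$, the high--high piece $\widetilde{\mathcal{J}}_N^3$ via Proposition \ref{L2bilin}, and the quartic terms via Lemma \ref{omega.inv}, Remark \ref{symbolprod} and Propositions \ref{eL2trilin}, \ref{L2trilin}, with the weight of $\overline{Z}^\sigma$ absorbing the $w_{\ll N}$ slot and the exponent $\gamma=s-2+\frac{3}{2}\alpha-\sigma$ coming from the strict inequality $\sigma<s-2+\frac{3}{2}\alpha$. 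One structural claim, however, is off: you assert that the worst quartic high--low interaction is cured by a cancellation between a term coming from the nonlinear part of $\widetilde{\mathcal{K}}_N^{(1)}$ and one coming from $\widetilde{\mathcal{K}}_N^{(2)}$, and that this ``dictates the pairing of the two correction terms and the values of $\widetilde{c_1},\widetilde{c_2}$''. That mechanism cannot operate: the dangerous quartic terms generated by the first correction are multilinear expressions in $(z_{\ll N},z_{\ll N},w_{\sim N},w_{\sim N})$, while those generated by the second involve $(w_{\ll N},z_{\ll N},z_{\sim N},w_{\sim N})$, so they are different quadrilinear forms and cannot cancel one another; moreover $\widetilde{c_1}=\widetilde{c_2}=-1$ are already forced by the cubic cancellations you describe earlier, leaving no freedom. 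In the paper the mean-value-theorem cancellation you write down --- the difference $(\widetilde{\chi_i}/\Omega_2)(\xi_1,\xi_1+\xi_3)-(\widetilde{\chi_i}/\Omega_2)(\xi_1,\xi_3)$ --- takes place \emph{within} the nonlinear contribution of a single correction, by pairing the two terms in which the nonlinearity is substituted into either of its two $P_{\sim N}$ slots, exactly as for $\mathcal{K}_N^{31}$ in Proposition \ref{prop-ee} and for $\widetilde{\mathcal{K}}_N^{31}$ and $\widetilde{\mathcal{L}}_N^{41}$ here; since your displayed symbol difference carries the same $\widetilde{\chi_i}$ in both terms, the computation you indicate is in fact the correct intra-correction one, and only its provenance is misattributed. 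With that point corrected, the remainder of your scheme coincides with the paper's argument, up to the exponent bookkeeping that you leave as ``one checks''.
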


\begin{proof}
  We argue as in the proof of Proposition \ref{prop-ee}. To deal with the low frequencies $N\le N_0$, we use equation \eqref{eqdiff} to deduce
  $$
  \frac{d}{dt} \widetilde{\mathcal{E}}_N(z(t),w(t)) = \int_\R P_N\partial_x(zw) P_Nw
  $$
  for any $t\in (0,T]$. Integrating this on $(0,t)$ it follows after a dyadic decomposition of $P_N(zw)$ that
 \begin{displaymath}
  \begin{split}
    |\widetilde{\mathcal{E}}_N(z(t),w(t))| &\lesssim |\widetilde{\mathcal{E}}_N(z(0),w(0))| + N^{\frac32}T \|z_{\lesssim N}\|_{L^\infty_TL^2_x} \|w_{\lesssim N}\|_{L^\infty_TL^2_x} \|w_N\|_{L^\infty_TL^2_x}\\
   & \quad  +\sum_{N_1\gg N} N^{\frac32}  T \|z_{N_1}\|_{L^\infty_TL^2_x} \|w_{\sim N_1}\|_{L^\infty_TL^2_x} \|w_N\|_{L^\infty_TL^2_x}
     \\ &=:|\widetilde{\mathcal{E}}_N(z(0),w(0))|+ I_N+II_N \, .
     \end{split}
  \end{displaymath}
On the one hand, we infer
\begin{displaymath}
\cro{N^{-1}}^2\cro{N}^{2\sigma}I_N \lesssim TN^{\frac32}\|z\|_{L^{\infty}_TL^2_x}\|w\|_{L^{\infty}_T\overline{H}^{\sigma}_x}\|w_N\|_{L^{\infty}_T\overline{H}^{\sigma}_x} \, ,
\end{displaymath}
by using that $\cro{N^{-1}}\cro{N}^{\sigma}\|w_{\lesssim N}\|_{L^\infty_TL^2_x} \lesssim \|w\|_{L^{\infty}_T\overline{H}^{\sigma}_x} $. On the other hand, recalling that $0<-\sigma<s$, we get
\begin{displaymath}
\cro{N^{-1}}^2\cro{N}^{2\sigma}II_N \lesssim \cro{N}^{-s}\cro{N^{-1}}N^{\frac32} \|z\|_{L^{\infty}_TH^s_x}\|w\|_{L^{\infty}_TH^{\sigma}_x}\|w_N\|_{L^{\infty}_T\overline{H}^{\sigma}_x}\, .
\end{displaymath}
Therefore, we deduce by summing over $N \le N_0$ that
\begin{displaymath}
\begin{split}
\sum_{N\le N_0}  \cro{N^{-1}}^2& \cro{N}^{2\sigma} |\widetilde{\mathcal{E}}_N(z(t),w(t))| \\&\lesssim  \sum_{N\le N_0}  \cro{N^{-1}}^2\cro{N}^{2\sigma} |\widetilde{\mathcal{E}}_N(z(0),w(0))| + T N_0^{\frac32}\|z\|_{L^\infty_TH^s_x} \|w\|_{L^\infty_T \overline{H}^\sigma_x}^2.
\end{split}
\end{displaymath}

We consider now the case $N>N_0$. We take the extensions $\tilde{w}=\rho_T(w)$  and $\tilde{z}=\rho_T(z)$ defined in \eqref{defrho}, and we drop the tilde in the sequel. Arguing as in the proof of Proposition \ref{prop-ee}, we get
$$
  \cro{N^{-1}}^2  \cro{N}^{2\sigma} \widetilde{\mathcal{E}}_N(t) =  \cro{N^{-1}}^2\cro{N}^{2\sigma} \widetilde{\mathcal{E}}_N(0) - \widetilde{\mathcal{J}}_N + \widetilde{c_1}\widetilde{\mathcal{K}}_N + \widetilde{c_2} \widetilde{\mathcal{L}}_N
  $$
  with
  $$
  \widetilde{\mathcal{J}}_N = \cro{N^{-1}}^2 \cro{N}^{2\sigma}\int_{\R_t} P_N(zw)P_N\partial_xw
  $$
  and
  $$
  \widetilde{\mathcal{K}}_N = \cro{N^{-1}}^2 \cro{N}^{2\sigma}\int_0^t\frac{d}{dt}\widetilde{\mathcal{E}}_N^1(t')dt',\quad \widetilde{\mathcal{L}}_N =
   \cro{N^{-1}}^2 \cro{N}^{2\sigma}\int_0^t\frac{d}{dt}\widetilde{\mathcal{E}}_N^2(t')dt' .
  $$
  Proceeding as in the Section \ref{section-ee}, we split $\widetilde{\mathcal{J}}_N$ as $\widetilde{\mathcal{J}}_N^1+\widetilde{\mathcal{J}}_N^2+\widetilde{\mathcal{J}}_N^3$ with
  \begin{align*}
    \widetilde{\mathcal{J}}_N^1 &= N^{2\sigma} \int_{\R_t} \Pi_{\widetilde{\chi_1}}^2(\partial_xz_{\ll N}, w_{\sim N}) w_{\sim N},\\
    \widetilde{\mathcal{J}}_N^2 &= N^{2\sigma} \int_{\R_t} \Pi_{\widetilde{\chi_2}}^2(w_{\ll N}, z_{\sim N}) \partial_xw_{\sim N},\\
    \widetilde{\mathcal{J}}_N^3 &= N^{2\sigma} \sum_{N_1\gtrsim N} \int_{\R_t} P_N(z_{N_1}w_{\sim N_1})\partial_xw_N,
  \end{align*}
  where $\widetilde{\chi_1}=- \frac 12  \cro{N^{-1}}^2\chi_1 $ and $\widetilde{\chi_2}(\xi_1,\xi_2)= \cro{N^{-1}}^2\Bigl(\frac{\cro{N}}{N}\Bigr)^{2\sigma}\phi_N^2(\xi_1+\xi_2)$.

  \medskip

\noindent \textit{Estimate for $\widetilde{\mathcal{J}}_N^3$.} We infer from Proposition \ref{L2bilin} that
  \begin{align*}
    |\widetilde{\mathcal{J}}_N^3| &\lesssim \sum_{N_1\gtrsim N} N^{\sigma+\frac 12-\frac \alpha 4} N_1^{-s-\sigma+(1-\alpha)_+} \|z_{N_1}\|_{Y^s} \|w_{\sim N_1}\|_{Z^\sigma} \|w_N\|_{Z^\sigma}\\
    &\lesssim N^{(s_\alpha-s)_+} \|z\|_{Y^s_T} \|w\|_{Z^\sigma_T}^2 ,
  \end{align*}
  where in the last step we used that $ -s-\sigma+(1-\alpha)_+ <-(s-s_\alpha)_+<0 $ to sum over $ N_1$.
  Therefore we get
  \begin{equation}
    \sum_{N>N_0} |\widetilde{\mathcal{J}}_N^3| \lesssim  N_0^{(s_\alpha-s)_+} \|z\|_{Y^s} \|w\|_{Z^\sigma}^2.
  \end{equation}

  \medskip

\noindent \textit{Estimate for $-\widetilde{\mathcal{J}}_N^1+\widetilde{c_1}\widetilde{\mathcal{K}}_N$.} We deduce using equation \eqref{eqdiff} that
  \begin{align*}
  \widetilde{\mathcal{K}}_N &= -N^{2\sigma}\int_{\R^2_t} \left(\frac{\widetilde{\chi_1}}{\Omega_2}\right)(\xi_1,\xi_2) i\xi_1(\omega_{\alpha+1}(\xi_1)+\omega_{\alpha+1}(\xi_2)-\omega_{\alpha+1}(\xi)) \widehat{z_{\ll N}}(\xi_1) \widehat{w_{\sim N}}(\xi_2) \widehat{w_{\sim N}}(-\xi)\\
  &\quad + N^{2\sigma}\int_{\R^2_t} \left(\frac{\widetilde{\chi_1}}{\Omega_2}\right)(\xi_1,\xi_2) \xi_1 \widehat{P_{\ll N}\partial_x(u^2+v^2)}(\xi_1) \widehat{w_{\sim N}}(\xi_2) \widehat{w_{\sim N}}(-\xi)\\
  &\quad + N^{2\sigma}\int_{\R^2_t} \left(\frac{\widetilde{\chi_1}}{\Omega_2}\right)(\xi_1,\xi_2) \xi_1 \widehat{z_{\ll N}}(\xi_1) \widehat{P_{\sim N}\partial_x(zw)}(\xi_2) \widehat{w_{\sim N}}(-\xi)\\
  &\quad + N^{2\sigma}\int_{\R^2_t} \left(\frac{\widetilde{\chi_1}}{\Omega_2}\right)(\xi_1,\xi_2) \xi_1 \widehat{z_{\ll N}}(\xi_1) \widehat{w_{\sim N}}(\xi_2) \widehat{P_{\sim N}\partial_x(zw)}(-\xi)\\
  &:= -\widetilde{\mathcal{J}}_N^1 + \widetilde{\mathcal{K}}_N^1+\widetilde{\mathcal{K}}_N^2+\widetilde{\mathcal{K}}_N^3.
\end{align*}
We choose $\widetilde{c_1}=-1$ so that the first term on the right-hand side cancels out with $-\widetilde{\mathcal{J}}_N^1$ and it suffices to estimate $\widetilde{\mathcal{K}}_N^1+\widetilde{\mathcal{K}}_N^2+\widetilde{\mathcal{K}}_N^3$.

\medskip

\noindent \textit{Estimate for $\widetilde{\mathcal{K}}_N^1$.} The contribution of $\widetilde{\mathcal{K}}_N^1$ may be treated exactly as $K_N^1$ in the proof of Proposition \ref{prop-ee}. We obtain
\begin{equation}
  \sum_{N>N_0} |\widetilde{\mathcal{K}}_N^1| \lesssim (N_0^{-\alpha_+}+ N_0^{(s-s_\alpha)_+})(\|u\|_{Y^s}^2+\|v\|_{Y^s}^2) \|w\|_{Z^\sigma}^2.
\end{equation}

\medskip

\noindent \textit{Estimate for $\widetilde{\mathcal{K}}_N^2+\widetilde{\mathcal{K}}_N^3$.} We decompose $P_{\sim N}(zw)$ into dyadic pieces as follows:
\begin{equation}\label{prop-eed.3}
P_{\sim N}(zw)= z_{\ll N}w_{\sim N} + N^{-1}\Pi_{\widetilde{\chi}}^2(\partial_xz_{\ll N}, w_{\sim N}) + P_{\sim N}(z_{\sim N}w_{\lesssim N}) + P_{\sim N}(z_{\gg N}w_{\gg N}).
\end{equation}
As in the proof of Proposition \ref{prop-ee}, this leads to estimate $\sum_{j=1}^4\widetilde{\mathcal{K}}_N^{3j}$ where $\widetilde{\mathcal{K}}_N^{3j}$ denotes the contribution to $\widetilde{\mathcal{K}}_N^2+\widetilde{\mathcal{K}}_N^3$ of the jth term in the RHS of \eqref{prop-eed.3}.

\medskip

\noindent \textit{Estimate for $\widetilde{\mathcal{K}}_N^{31}$ and $\widetilde{\mathcal{K}}_N^{32}$.} Since in these terms, both occurrences of $w$ are localized at frequency $\sim N$, they may be estimated as $K_N^{31}$ and $K_N^{32}$ in the proof of Proposition \ref{prop-ee}. We infer that
\begin{equation}
  \sum_{N>N_0} (|\widetilde{\mathcal{K}}_N^{31}|+|\widetilde{\mathcal{K}}_N^{32}|) \lesssim    (N_0^{-\alpha_+}+ N_0^{(s_\alpha-s)_+})\|z\|_{Y^s}^2 \|w\|_{Z^\sigma}^2.
\end{equation}

\medskip

\noindent \textit{Estimate for $\widetilde{\mathcal{K}}_N^{33}$.}  It suffices to consider the contribution $\widetilde{\mathcal{K}}_N^{331}$ of $ \widetilde{\mathcal{K}}_N^2 $ to $\widetilde{\mathcal{K}}_N^{33}$ since the contribution
 to $ \widetilde{\mathcal{K}}_N^3 $ can be estimated in exactly the same way.
\begin{align*}
\widetilde{\mathcal{K}}_N^{331} &= N^{2\sigma} \int_{\R^2_t} \left(\frac{\widetilde{\chi_1}}{\Omega_2}\right)(\xi_1,\xi_2) \xi_1 \widehat{z_{\ll N}}(\xi_1) \widehat{P_{\sim N}\partial_x(z_{\sim N}w_{\lesssim N})}(\xi_2) \widehat{w_{\sim N}}(-\xi) \\
&= \sum_{N_1\ll N}\sum_{N_2\lesssim N} N^{2\sigma} (N_1N^\alpha)^{-1} N_1N \int_{\R_t} \Pi_{\widetilde{\chi}_{K^{331}}}^3(z_{N_1}, w_{N_2}, z_{\sim N}) w_{\sim N}
\end{align*}
where
$$
\widetilde{\chi}_{K^{331}}(\xi_1,\xi_2,\xi_3) = i\widetilde{\chi_1}(\xi,\xi_2+\xi_3) \frac{N_1N^\alpha}{\Omega_2(\xi_1,\xi_2+\xi_3)} \frac{\xi_1}{N_1} \frac{\xi_2+\xi_3}N \phi_{\sim N}(\xi_2+\xi_3)
$$
satisfies \eqref{pseudoprod.1}. Estimate \eqref{eL2trilin.2} gives
\begin{align*}
  |\widetilde{\mathcal{K}}_N^{331}| &\lesssim \sum_{N_1\ll N}\sum_{N_2\lesssim N} N^{(2\sigma+1-\alpha)_+} N_1^{(\frac12)_-} \cro{N_1}^{-\frac \alpha 4} N_2^{(\frac12)_-} \cro{N_2}^{-\frac \alpha 4}  \|z_{N_1}\|_{Y^0} \|w_{N_2}\|_{Z^0} \|z_{\sim N}\|_{Y^0} \|w_{\sim N}\|_{Z^0}\\
  &\lesssim N^{\sigma-s+(1-\alpha)_+}\sum_{N_1\ll N}\sum_{N_2\lesssim N} N_1^{(\frac12)_-} \cro{N_1}^{-\frac \alpha 4 -s} N_2^{(\frac12)_-} \cro{N_2}^{-\frac \alpha 4 -\sigma} \|z\|_{Y^s} \|w\|_{Z^\sigma} \|z_{\sim N}\|_{Y^s} \|w_{\sim N}\|_{Z^\sigma}.
\end{align*}
Since $\frac 12-\frac \alpha 4-s<0$ and $\frac 12-\frac \alpha 4-\sigma>0 $, this yields
\begin{equation}
  \sum_{N>N_0} |\widetilde{\mathcal{K}}_N^{331}| \lesssim \sum_{N>N_0} N^{(s_\alpha-s)_+} \|z\|_{Y^s_T}^2 \|w\|_{Z^\sigma_T}^2 \lesssim
   N_0^{(s_\alpha-s)_+}\|z\|_{Y^s}^2 \|w\|_{Z^\sigma}^2\; .
\end{equation}

\medskip

\noindent \textit{Estimate for $\widetilde{\mathcal{K}}_N^{34}$.} Again, we only estimate the contribution of
\begin{align*}
  \widetilde{\mathcal{K}}_N^{341} &=  N^{2\sigma} \int_{\R^2_t} \left(\frac{\widetilde{\chi_1}}{\Omega_2}\right)(\xi_1,\xi_2) \xi_1 \widehat{z_{\ll N}}(\xi_1) \widehat{P_{\sim N}\partial_x(z_{\gg N}w_{\gg N})}(\xi_2) \widehat{w_{\sim N}}(-\xi)\\
  &= \sum_{N_1\ll N} \sum_{N_2\gg N} N^{2\sigma} (N_1N^\alpha)^{-1} N_1N \int_{\R_t} \Pi_{\widetilde{\chi}_{K^{341}}}^3(z_{N_1}, z_{N_2}, w_{\sim N_2}) w_{\sim N}.
\end{align*}
It follows from estimate \eqref{eL2trilin.2} that
\begin{align*}
  |\widetilde{\mathcal{K}}_N^{341}| &\lesssim \sum_{N_1\ll N} \sum_{N_2\gg N} N^{(\sigma+s_\alpha)_-} N_2^{(-s-\sigma)_+} N_1^{(\frac12)_-} \cro{N_1}^{-\frac \alpha 4-s} \|z_{N_1}\|_{Y^s} \|z_{N_2}\|_{Y^s} \|w_{\sim N_2}\|_{Z^\sigma} \|w_{\sim N}\|_{Z^\sigma}\\
  &\lesssim  N^{(s_\alpha-s)_+} \|z\|_{Y^s}^2 \|w\|_{Z^\sigma}^2,
\end{align*}
where in the last step we used that $s+\sigma>0 $ and  $\frac 12-\frac \alpha 4-s<0$.
We conclude that
\begin{equation}
  \sum_{N>N_0} |\widetilde{\mathcal{K}}_N^{341}| \lesssim   N_0^{(s_\alpha-s)_+}\|z\|_{Y^s}^2 \|w\|_{Z^\sigma}^2.
\end{equation}

\medskip

\noindent \textit{Estimate for $-\widetilde{\mathcal{J}}_N^2+\widetilde{c_2}\widetilde{\mathcal{L}}_N$.} Using equation \eqref{eqdiff} we rewrite $\widetilde{\mathcal{L}}_N$ as
\begin{align*}
  \widetilde{\mathcal{L}}_N &= -N^{2\sigma}\int_{\R^2_t} \left(\frac{\widetilde{\chi_2}}{\Omega_2}\right)(\xi_1,\xi_2) i(\xi_1+\xi_2)(\omega_{\alpha+1}(\xi_1)+\omega_{\alpha+1}(\xi_2)-\omega_{\alpha+1}(\xi)) \widehat{w_{\ll N}}(\xi_1) \widehat{z_{\sim N}}(\xi_2) \widehat{w_{\sim N}}(-\xi)\\
  &\quad + N^{2\sigma}\int_{\R^2_t} \left(\frac{\widetilde{\chi_2}}{\Omega_2}\right)(\xi_1,\xi_2) (\xi_1+\xi_2) \widehat{P_{\ll N}\partial_x(zw)}(\xi_1) \widehat{z_{\sim N}}(\xi_2) \widehat{w_{\sim N}}(-\xi)\\
  &\quad + \frac 12N^{2\sigma}\int_{\R^2_t} \left(\frac{\widetilde{\chi_2}}{\Omega_2}\right)(\xi_1,\xi_2) (\xi_1+\xi_2) \widehat{w_{\ll N}}(\xi_1) \widehat{P_{\sim N}\partial_x(w^2)}(\xi_2) \widehat{w_{\sim N}}(-\xi)\\
  &\quad + \frac 12N^{2\sigma}\int_{\R^2_t} \left(\frac{\widetilde{\chi_2}}{\Omega_2}\right)(\xi_1,\xi_2) (\xi_1+\xi_2) \widehat{w_{\ll N}}(\xi_1) \widehat{P_{\sim N}\partial_x(z^2)}(\xi_2) \widehat{w_{\sim N}}(-\xi)\\
  &\quad + N^{2\sigma}\int_{\R^2_t} \left(\frac{\widetilde{\chi_2}}{\Omega_2}\right)(\xi_1,\xi_2) (\xi_1+\xi_2) \widehat{w_{\ll N}}(\xi_1) \widehat{z_{\sim N}}(\xi_2) \widehat{P_{\sim N}\partial_x(zw)}(-\xi)\\
  &:= -\widetilde{\mathcal{J}}_N^2 + \widetilde{\mathcal{L}}_N^1+\widetilde{\mathcal{L}}_N^2+\widetilde{\mathcal{L}}_N^3+\widetilde{\mathcal{L}}_N^4
\end{align*}
where we used that $z=u+v$ solves
$$
(\partial_t-L_{\alpha+1})z = \partial_x(u^2+v^2) = \frac 12(\partial_x(w^2)+\partial_x(z^2)) \text{ on } ]0,T[\; .
$$
Taking $\widetilde{c_2}=-1$ it remains to estimate $\sum_{j=1}^4 \widetilde{\mathcal{L}}_N^j$.

\medskip

\noindent \textit{Estimate for $\widetilde{\mathcal{L}}_N^1$.} We may rewrite this term as
$$
\widetilde{\mathcal{L}}_N^1 = \sum_{N_1\ll N} \sum_{N_2,N_3} N^{2\sigma} (N_1N^\alpha)^{-1} NN_1 \int_{\R_t} P_{N_1}(z_{N_2} w_{N_3}) \Pi_{\widetilde{\chi}_{L^1}}^2(z_{\sim N}, w_{\sim N})
$$
with
$$
\widetilde{\chi}_{L^1}(\xi_1,\xi_2) = i\widetilde{\chi}_2(-\xi_1-\xi_2, \xi_1) \frac{N_1N^\alpha}{\Omega_2(-\xi_1-\xi_2, \xi_1)} \frac{\xi_2}{N} \frac{\xi_1+\xi_2}{N_1}.
$$
The contribution $\widetilde{\mathcal{L}}_N^{11}$ of the region where $N_2\vee N_3\lesssim N$ is estimated thanks to \eqref{eL2trilin.2} by
\begin{align*}
  |\widetilde{\mathcal{L}}_N^{11}| &\lesssim \sum_{N_1\ll N} \sum_{N_2\vee N_3\lesssim N} N^{\sigma-s+1-\alpha} N_2^{(\frac12)_-} \cro{N_2}^{-\frac \alpha 4-s} N_3^{(\frac12)_-} \cro{N_3}^{-\frac \alpha 4-\sigma} N^{0_+} \\
  &\quad \times \|z_{N_2}\|_{Y^s} \|w_{N_3}\|_{Z^\sigma} \|z_{\sim N}\|_{Y^s} \|w_{\sim N}\|_{Z^\sigma}\\
  &\lesssim   N^{(s_\alpha-s)_+} \|z\|_{Y^s} \|w\|_{Z^\sigma} \|z_{\sim N}\|_{Y^s} \|w_{\sim N}\|_{Z^\sigma}
\end{align*}
where we used that $\frac{1}{2}-\frac \alpha 4 -s <0 $ and $ \frac{1}{2}-\frac \alpha 4 -\sigma>0 $. For the other contribution $\widetilde{\mathcal{L}}_N^{12}$, we must have $N_2\sim N_3$ and
by virtue of \eqref{eL2trilin.2} again, we deduce that
\begin{align*}
  |\widetilde{\mathcal{L}}_N^{12}| &\lesssim  \sum_{N_1\ll N} \sum_{N_2\sim N_3\gg N} N^{(\sigma-s+2-\frac 32 \alpha)_-} N_2^{-(s+\sigma)}  N_2^{0_+}
   \|z_{N_2}\|_{Y^s} \|w_{N_3}\|_{Z^\sigma} \|z_{\sim N}\|_{Y^s} \|w_{\sim N}\|_{Z^\sigma}\\
  &\lesssim   N^{2(s_\alpha-s)_+} \|z\|_{Y^s} \|w\|_{Z^\sigma} \|z_{\sim N}\|_{Y^s} \|w_{\sim N}\|_{Z^\sigma},
\end{align*}
where we used that $s+\sigma>0 $ and $ -s+1-\frac 34 \alpha\le s_\alpha-s$ for $\alpha\le 1$.
Therefore we infer that
 \begin{equation}
  \sum_{N>N_0} |\widetilde{\mathcal{L}}_N^1| \lesssim N_0^{(s_\alpha-s)_+}\|z\|_{Y^s}^2 \|w\|_{Z^\sigma}^2.
\end{equation}

\medskip

\noindent \textit{Estimate for $\widetilde{\mathcal{L}}_N^2$.} We need to bound
$$
\widetilde{\mathcal{L}}_N^2 = \sum_{N_1\ll N} \sum_{N_2,N_3} N^{2\sigma+2} (N_1N^\alpha)^{-1} \int_{\R_t} P_{\sim N}(w_{N_2}w_{N_3}) \Pi_{\widetilde{\chi}_{L^2}}^2(w_{N_1}, w_{\sim N})
$$
where
$$
\widetilde{\chi}_{L^2}(\xi_1,\xi_2) = \frac i2\widetilde{\chi}_2(\xi_1, -\xi_1-\xi_2) \frac{N_1N^\alpha}{\Omega_2(\xi_1, -\xi_1-\xi_2)} \frac{\xi_2}{N} \frac{\xi_1+\xi_2}{N_1}.
$$
We may always assume $N_2\le N_3$. The contribution $\widetilde{\mathcal{L}}_N^{21}$ of the sum over $N_2 \sim N_3\gg N$ is estimated thanks to Proposition \ref{L2trilin} by
\begin{align*}
  |\widetilde{\mathcal{L}}_N^{21}| &\lesssim \sum_{N_1\ll N} \sum_{N_2\gg N} N^{s-\frac 1 2 +\frac \alpha 4} N_1^{-(\frac12)_-} N_2^{-2s+(1-\alpha)_+} \|w_{N_1}\|_{Z^0} \|w_{N_2}\|_{Y^s} \|w_{\sim N_2}\|_{Y^s} \|w_{\sim N}\|_{Z^\sigma} \notag\\
  &\lesssim N^{(-s+\frac 1 2 -\frac 3 4 \alpha)_+} \|w\|_{Y^s}^2 \|w\|_{\overline{Z}^\sigma}^2.
\end{align*}
where in the first step we used that $\sigma<s-2+\frac32\alpha$ and in the last step we used that $ \sigma \ge - \frac 1 2 +\frac \alpha 4 >- \frac12_-$. We also used the weight $\cro{N_1^{-1}}$ of $\overline{Z}^{\sigma}$ to sum over $N_1 \le 1$. This leads to
\begin{equation} \label{prop-eed.7}
\sum_{N>N_0}   |\widetilde{\mathcal{L}}_N^{21}| \lesssim N_0^{(s_\alpha-s-1+\frac\alpha2)_+}  \|w\|_{Y^s}^2 \|w\|_{\overline{Z}^\sigma}^2
\lesssim N_0^{s_\alpha-s}  \|w\|_{Y^s}^2 \|w\|_{\overline{Z}^\sigma}^2.
\end{equation}

Similarly, we bound the contribution $\widetilde{\mathcal{L}}_N^{22}$ of the sum over $N_1\ll N_2$ and $N_3\sim N$ by
\begin{align*}
  |\widetilde{\mathcal{L}}_N^{22}| &\lesssim  \sum_{N_1\ll N_2\le N_3 \sim N} N^{2(\sigma-s +\frac 3 2 -\alpha)_+} N_1^{-(\frac12)_-} \langle N_2 \rangle^{-\frac 12-\frac \alpha 4} \|w_{N_1}\|_{Z^0} \|w_{N_2}\|_{Z^0} \|w_{\sim N}\|_{Y^s}^2 \notag\\
  &\lesssim N^{2(\sigma-s +2 -\frac{3}{2}\alpha)+(\alpha-1)_+} \|w\|_{\overline{Z}^\sigma}^2 \|w\|_{Y^s}^2,\label{prop-eed.8}
\end{align*}
where in the last step we used that $\sigma>-\frac 1 2 $. We also used the weight $\cro{N_1^{-1}}$ of $\overline{Z}^{\sigma}$ to sum over $N_1 \le 1$.
Setting $ \gamma=s -2 +\frac{3}{2}\alpha-\sigma >0 $, this leads to
\begin{equation}\label{prop-eed.8}
 \sum_{N>N_0} |\widetilde{\mathcal{L}}_N^{22}| \lesssim N_0^{-2\gamma+(\alpha-1)_+} \|w\|_{\overline{Z}^\sigma}^2 \|w\|_{Y^s}^2 \, .
\end{equation}
To deal with the last region $N_3\sim N$ and $N_2\lesssim N_1$, we use estimate  \eqref{eL2trilin.2} to get
\begin{align*}
  |\widetilde{\mathcal{L}}_N^{23}| &\lesssim \sum_{N_1\ll N} \sum_{N_2\lesssim N_1} N^{2(\sigma-s+1-\frac \alpha 2)_+} N_1^{-(\frac12)_-} \cro{N_1}^{-\frac \alpha 4-\sigma}  N_2^{(\frac12)_-} \cro{N_2}^{-\frac \alpha 4-\sigma} \|w_{N_1}\|_{Z^\sigma} \|w_{N_2}\|_{Z^\sigma} \|w_{\sim N}\|_{Y^s}^2 \notag\\
  &\lesssim  N^{2(\sigma-s+2-\frac{3}{2} \alpha)+2(\alpha-1)_+} (1+N^{ -\frac \alpha 2 -2\sigma})\|w\|_{\overline{Z}^\sigma}^2 \|w\|_{Y^s}^2, \label{prop-eed.9}
\end{align*}
where in the last step we used that $ \frac 1 2 -\frac \alpha 4 -\sigma >0 $ since $ \sigma<0 $ and that $ -\frac 1 2 -\frac \alpha 4 -\sigma <0$
 since $ \sigma>\frac \alpha 4 -\frac 1 2 $. It follows that
\begin{equation} \label{prop-eed.9}
\sum_{N>N_0}   |\widetilde{\mathcal{L}}_N^{23}| \lesssim ( N_0^{-2\gamma+2(\alpha-1)_+}+N_0^{(s_\alpha-s)_+} ) \|w\|_{Y^s_T}^2 \|w\|_{\overline{Z}^\sigma_T}^2
\end{equation}
and we deduce gathering \eqref{prop-eed.7}-\eqref{prop-eed.8}-\eqref{prop-eed.9} that
\begin{equation}
  \sum_{N>N_0} |\widetilde{\mathcal{L}}_N^2| \lesssim( N_0^{-2\gamma+(\alpha-1)_+}+N_0^{(s_\alpha-s)_+} ) (\|u\|_{Y^s}^2 + \|v\|_{Y^s}^2) \|w\|_{\overline{Z}^\sigma}^2.
\end{equation}
\medskip

\noindent \textit{Estimate for $\widetilde{\mathcal{L}}_N^3+\widetilde{\mathcal{L}}_N^4$.} Performing a dyadic decomposition for $P_{\sim N}(z^2)$ and $P_{\sim N}(zw)$, we get from \eqref{prop-ee.8} and \eqref{prop-eed.3} that
$$
\widetilde{\mathcal{L}}_N^3+\widetilde{\mathcal{L}}_N^4 = \sum_{i=1}^5 \widetilde{\mathcal{L}}_N^{4i}
$$
with
\begin{multline*}
\widetilde{\mathcal{L}}_N^{41} = N^{2\sigma} \int_{\R^2_t} \left(\frac{\widetilde{\chi}_2}{\Omega_2}\right) (\xi_1,\xi_2) (\xi_1+\xi_2) \widehat{w_{\ll N}}(\xi_1) \\ \times\left[ \widehat{\partial_x(z_{\ll N} z_{\sim N})}(\xi_2) \widehat{w_{\sim N}}(-\xi) + \widehat{z_{\sim N}}(\xi_2) \widehat{\partial_x(z_{\ll N}w_{\sim N})}(-\xi) \right],
\end{multline*}

\begin{multline*}
\widetilde{\mathcal{L}}_N^{42} = N^{2\sigma-1} \int_{\R^2_t} \left(\frac{\widetilde{\chi}_2}{\Omega_2}\right) (\xi_1,\xi_2) (\xi_1+\xi_2) \widehat{w_{\ll N}}(\xi_1) \big[ \F_x(\partial_x \Pi_{\widetilde{\chi}}^2(\partial_x z_{\ll N}, z))(\xi_2) \widehat{w_{\sim N}}(-\xi)\\
 + \widehat{z_{\sim N}}(\xi_2) \F_x(\partial_x \Pi_{\widetilde{\chi}}^2(\partial_x z_{\ll N}, w))(-\xi) \big],
\end{multline*}

$$
\widetilde{\mathcal{L}}_N^{43} = \frac 12 N^{2\sigma} \int_{\R^2_t} \left(\frac{\widetilde{\chi}_2}{\Omega_2}\right) (\xi_1,\xi_2) (\xi_1+\xi_2) \widehat{w_{\ll N}}(\xi_1) \F_x(\partial_xP_{\sim N}(z_{\gtrsim N}z_{\gtrsim N}))(\xi_2) \widehat{w_{\sim N}}(-\xi),
$$

$$
\widetilde{\mathcal{L}}_N^{44} = N^{2\sigma} \int_{\R^2_t} \left(\frac{\widetilde{\chi}_2}{\Omega_2}\right) (\xi_1,\xi_2) (\xi_1+\xi_2) \widehat{w_{\ll N}}(\xi_1) \widehat{z_{\sim N}}(\xi_2) \F_x(\partial_xP_{\sim N}(z_{\sim N}w_{\lesssim N}))(-\xi),
$$
and
$$
\widetilde{\mathcal{L}}_N^{45} = N^{2\sigma} \int_{\R^2_t} \left(\frac{\widetilde{\chi}_2}{\Omega_2}\right) (\xi_1,\xi_2) (\xi_1+\xi_2) \widehat{w_{\ll N}}(\xi_1) \widehat{z_{\sim N}}(\xi_2) \F_x(\partial_xP_{\sim N}(z_{\gg N}w_{\gg N}))(-\xi).
$$

\medskip

\noindent \textit{Estimate for $\widetilde{\mathcal{L}}_N^{41}$.} Arguing as for the term $K_N^{31}$ in the proof of Proposition \ref{prop-ee}, we obtain
$$
\widetilde{\mathcal{L}}_N^{41} = \sum_{N_1,N_2\ll N} N^{2\sigma+1}\frac{N_1\vee N_2}{N_1N^\alpha} G_t^3(w_{N_1}, z_{\sim N}, z_{N_2}, w_{\sim N}).
$$
The contribution $\widetilde{\mathcal{L}}_N^{411}$ of the sum over $N_2\lesssim N_1$ is bounded thanks to Proposition \ref{eL2trilin} by
\begin{align}
  |\widetilde{\mathcal{L}}_N^{411}| &\lesssim \sum_{N_1\ll N}\sum_{N_2\lesssim N_1} N^{(\sigma-s+1-\alpha)_+} N_1^{(\frac12)_-} \cro{N_1}^{-\frac \alpha 4-\sigma} N_2^{(\frac12)_-} \cro{N_2}^{-\frac \alpha 4-s}\\ & \quad \times \|w_{N_1}\|_{Z^\sigma} \|z_{\sim N}\|_{Y^s} \|z_{N_2}\|_{Y^s} \|w_{\sim N}\|_{Z^\sigma}\notag \\
  &\lesssim N^{(s_\alpha-s)_+} \|z\|_{Y^s}^2 \|w\|_{\overline{Z}^\sigma}^2. \label{prop-eed.11}
\end{align}
Using Proposition \ref{L2trilin}, the other contribution $\widetilde{\mathcal{L}}_N^{412}$ is estimated by
\begin{align}
  |\widetilde{\mathcal{L}}_N^{412}| &\lesssim \sum_{N_1\ll N_2\ll N} N^{-(\frac\alpha2)_+} N_1^{-(\frac12)_-} \cro{N_2}^{\frac 12-\frac \alpha 4-s} \|w_{N_1}\|_{Z^0} \|z_{\sim N}\|_{Y^s} \|z_{N_2}\|_{Y^s} \|w_{\sim N}\|_{Z^\sigma}\notag \\
  &\lesssim N^{-(\frac\alpha2)_+} \|z\|_{Y^s}^2 \|w\|_{\overline{Z}^\sigma}^2. \label{prop-eed.12}
\end{align}
since $ s>\frac 1 2 - \frac \alpha 4$, $\sigma-s+2<\frac32\alpha$ and where we also used the weight $\cro{N_1^{-1}}$ of $\overline{Z}^{\sigma}$ to sum over $N_1 \le 1$.
Combining estimates \eqref{prop-eed.11}-\eqref{prop-eed.12} we infer that
\begin{equation}
  \sum_{N>N_0} |\widetilde{\mathcal{L}}_N^{41}| \lesssim  (N_0^{(s_\alpha-s)_+} + N_0^{-(\frac\alpha2)_+})\|z\|_{Y^s}^2  \|w\|_{\overline{Z}^\sigma}^2.
\end{equation}

\medskip

\noindent \textit{Estimate for $\widetilde{\mathcal{L}}_N^{42}$.} Noticing that
$$
\widetilde{\mathcal{L}}_N^{42} = \sum_{N_1,N_2\ll N} N^{2\sigma+1}\frac{N_2}{N_1N^\alpha} G_t^3(w_{N_1}, z_{\sim N}, z_{N_2}, w_{\sim N}),
$$
it is clear that we may follow the same lines as the estimate for $\widetilde{\mathcal{L}}_N^{41}$ to prove
\begin{equation}
  \sum_{N>N_0} |\widetilde{\mathcal{L}}_N^{42}| \lesssim   (N_0^{(s_\alpha-s)_+} + N_0^{-(\frac\alpha2)_+})\|z\|_{Y^s}^2  \|w\|_{\overline{Z}^\sigma}^2.
\end{equation}

\medskip

\noindent \textit{Estimate for $\widetilde{\mathcal{L}}_N^{43}$, $\widetilde{\mathcal{L}}_N^{44}$ and $\widetilde{\mathcal{L}}_N^{45}$.} It is not too hard to check that $\widetilde{\mathcal{L}}_N^{43}$ and $\widetilde{\mathcal{L}}_N^{45}$ may be estimated as $\widetilde{\mathcal{L}}_N^{21}$ above, whereas we can deal with $\widetilde{\mathcal{L}}_N^{44}$ by following the bounds on $
\widetilde{\mathcal{L}}_N^{22}$ and $ \widetilde{\mathcal{L}}_N^{23}$. Thus we get
\begin{equation}
  \sum_{N>N_0} (|\widetilde{\mathcal{L}}_N^{43}|+|\widetilde{\mathcal{L}}_N^{44}|+|\widetilde{\mathcal{L}}_N^{45}|) \lesssim  ( N_0^{-2\gamma_+(\alpha-1)_+}+N_0^{(s_\alpha-s)_+} )\|z\|_{Y^s}^2  \|w\|_{\overline{Z}^\sigma}^2.
\end{equation}

This concludes the proof of Proposition \ref{prop-eed}.

\end{proof}
\section{Proof of Theorem \ref{maintheo}}
Let us fix $ 0<\alpha\le 1 $.
\subsection{Lipschitz bound and uniqueness}
Let $ s>s_\alpha $,  $ 0<T\le 1 $ and assume that  $ u\in Y^s_T $ and $v\in Y^s_{T}$ are two solutions to \eqref{dpB} on $ ]0,T[ $ associated with  initial data $ u_0, v_0 \in H^s(\R) $ such that
 $ u_0-v_0 \in \overline{L}^2(\R) $.
We fix  $ -\frac 1 2 +\frac \alpha 4< \sigma<\min (0,s-2+\frac{3}{2} \alpha) $ and  set $w=u-v$.
 It is clear that $ w(0)=w_0 \in \overline{H}^\sigma $ and the continuous embedding from $ Y^s_{T} $ into $ Z^\sigma_{T} $ ensures that $w\in Z^\sigma	_{T} $.
 Now, from Duhamel formula we have
 $$
P_{\le 1} w(t) = P_{\le 1} U_\alpha(t) w_0 + \int_0^t U_\alpha(t-t') P_{\le 1}\partial_x (u^2-v^2)(t') \, dt'
$$
and thus ,
\begin{align*}
\|P_{\le 1} w\|_{L^2_{T} \overline{L}^\infty_x} & \lesssim  \| P_{\le 1} w\|_{L^\infty_{T} \overline{L}^2_x}
 \lesssim  \|w_0\|_{\overline{L}^2_x} + \sum_{N\le 1} N \cro{N^{-1}} N^{\frac 1 2}
\|P_{N}(u^2-v^2)\|_{L^\infty_{T} L^1_x}\\
&  \lesssim  \|w_0\|_{\overline{L}^2_x}+\|u\|_{L^\infty_{T} L^2_x}^2+\|v\|_{L^\infty_{T} L^2_x}^2 \; .
\end{align*}
Moreover, classical linear estimates in the context of Bourgain's space (cf. \cite{Bo1}, \cite{G}) lead to
$$
\|P_{\le 1} w \|_{\overline{X}_{T}^{\sigma-1,1}}\lesssim  \|w_0\|_{\overline{L}^2} +\|P_{\le 1 } (u^2-v^2)\|_{L^2_{{T}x}} \le
 \|w_0\|_{\overline{L}^2} +\|u\|_{L^\infty_{T} L^2_x}^2+\|v\|_{L^\infty_{T} L^2_x}^2
$$
These estimates combined with \eqref{coco2} and the fact that $ w\in Z^\sigma_{T} $, ensure that $ w\in \overline{Z}^\sigma_{T} $.

Combining Corollary \ref{coro2}, Lemma \ref{lem-EsTt} and Proposition \ref{prop-eed},  we obtain that, for any  $ N_0 \gg   (1+\|z\|_{L^\infty_{T} H^s_x})^\frac{2}{\alpha} $,
\begin{displaymath}
\begin{split}
\|w\|_{L^\infty_{T} \overline{H}^\sigma_x}^2 & \lesssim  \|w_0\|_{\overline{H}^\sigma_x}^2+({T} N_0^{\frac32}+N_0^{-(\frac\alpha2)_+}+ N_0^{(s_\alpha-s)_+}+N_0^{-2\gamma+(\alpha-1)_+})\\
 & \quad \quad \times (1+\|u\|_{Y^s_{T}}^2+\|v\|_{Y^s_{T}}^2)^3 \|w\|_{L^\infty_{T} \overline{H}^\sigma_{x}}^2
\end{split}
\end{displaymath}
 where $ \gamma=s -2 +\frac{3}{2}\alpha-\sigma >0 $.
Taking
$
N_0 \gg (1+\|u\|_{Y^s_{T}}^2+\|v\|_{Y^s_{T}}^2)^{\frac 3 \delta}$ with
$$
\delta=\min\big\{(\frac{\alpha}{2})_-, (s-s_\alpha)_-,(1-\alpha)_- +2\gamma\big\} >0 \, .
$$
This forces
\begin{equation}\label{lip}
\|w\|_{L^\infty_{T'} \overline{H}^\sigma_x} \lesssim \|w_0\|_{\overline{H}^\sigma_x}
\end{equation}
 for $0< T'\lesssim \min\Bigl\{ (1+\|u\|_{Y^s_T}^2+\|v\|_{Y^s_{T}}^2)^{-\frac{9}{2\delta}},T\Bigr\}$.

Therefore, taking $ u_0-v_0=0 $, we obtain that $ u\equiv v $ on $]0,T'[ $. Noticing, that equation \eqref{fKdV} ensures that $ u_t ,v_t \in L^\infty(0,T;H^{s-2}(\mathbb R))$
 and thus $ u,v \in C([0,T];L^2(\R)) $, it follows that
 $ v(T')=u(T')$.
Repeating this argument a finite number of times we extend the uniqueness  result on $ ]0,T[$.
\subsection{A priori estimates on smooth solutions}
  According to \cite{S} (see also \cite{bonasmith} to get the continuity of the flow-map) for any $ u_0\in H^\theta(\R) $, with $ \theta\ge 3$, there exists a positive time $T=T(\|u_0\|_{H^3}) $ and a unique solution $ u\in C([0,T];H^\theta(\R)) $  to \eqref{dpB}  emanating from $ u_0 $. Moreover, for any fixed $ R>0$,  the map $ u_0 \mapsto u$ is continuous from the ball of $ H^\theta(\R) $ of radius $ R $ centered at the origin into $ C([0,T]; H^\theta(\R)) $.
  \vspace*{4mm}

Let $u_0\in H^\infty(\R )$. From the above result $ u_0 $ gives rise to a solution $ u\in C([0,T^*[;H^\infty(\R)) $ to \eqref{dpB} with $ T^*\ge T(\|u_0\|_{H^3})$ and
\begin{equation}\label{exp}
\lim_{t\nearrow T^*}\|u(t)\|_{H^3}=+\infty   \quad \;\mbox{if} \;  T^*<+\infty.
\end{equation}
Let $ 0<T<T^*$. Since $u\in C([0,T];H^\infty(\R)) $ is a solution to \eqref{dpB}, we must have $ u_t \in L^\infty(0,T; H^\infty(\R)) $ and thus it is easy to check that
 $u\in Y^\theta_T $ for any $ \theta\in\R$ and
 \begin{equation}\label{gj}
 \lim_{T\searrow 0} \|u\|_{Y^\theta_T} =\|u_0\|_{H^\theta} \; .
 \end{equation}
  In the sequel, $ \kappa>0 $ and $ C_0>1 $ are  the constants appearing in Corollary \ref{coro1}.

 We claim that   there exist $ A_0>0$, $0<\beta_0\ll 1 $  such that $ T^*\ge A_0   (1+\|u_0\|_{H^{s'}})^{-\frac 1 \beta_0} $  and, for any $ s_\alpha<s'\le 3 $,
 \begin{equation}\label{cla1}
 \|u\|_{Y^{s'}_T} \le 2^2 C_0 \|u_0\|_{H^{s'}} \quad\mbox{ with } T=A_0   (1+\|u_0\|_{H^{s'}})^{-\frac 1 \beta_0}\; .
 \end{equation}
 Indeed, fixing $ s_\alpha<s'\le 3 $, it follows from \eqref{gj}  that
 $$
 \Lambda_{s'}=\big\{T\in ]0, T^*[ \, : \, \|u\|_{Y^{s'}_T}^2\le 2^4 C_0^2 \|u_0\|_{H^{s'}}^2\big\}
 $$
 is a non empty interval of $\R_+^*$.  Let us set $ T_0 =\sup \Lambda_{s'} $.  We proceed by contradiction, assuming that  $ T_0<A_0(1+\|u_0\|_{H^{s'}})^{-\frac 1 \beta_0}$ since otherwise we are done. Note that by continuity  $$
 \|u\|_{Y^{s'}_{T_0}} ^2\le 2^4 C_0^2 \|u_0\|_{H^{s'}}^2 \; .
 $$
  According to   Corollary \ref{coro1}, Lemma \ref{lem-EsT} and Proposition \ref{prop-ee}, there exist $ C_1,C_2\gg1 $ and $ 0<\varepsilon_0\ll 1 $ such that for any  $s>s_{\alpha}$, $ N_0\ge C_1  (1+\|u_0\|_{H^{s_\alpha}})^{\frac 1 \alpha}$  and  any $0<T<\min\{\varepsilon_0 \|u_0\|_{H^{s'}}^{-\frac 1 \kappa}, T_0\} $, it holds
 \begin{equation}\label{dod}
 \|u\|_{Y^{s}_T}^2 \le 4 C_0^2 \|u_0\|_{H^{s}}^2+ C_2 (T N_0^{\frac 3 2} +N_0^{(s_\alpha-s')_+}+N_0^{-\alpha_+})
 (1+\|u_0\|_{H^{s'}})^2 \|u\|_{Y^{s}_T}^2
 \end{equation}
 We  take $ A_0\le \varepsilon_0 $ and $ \beta_0\le \kappa $ so that $ \min\{\varepsilon_0 \|u_0\|_{H^{s'}}^{-\frac 1 \kappa}, T_0\}=T_0 $ and thus, by continuity,
  \eqref{dod} is satisfied with $ T=T_0$.
  Now, applying  \eqref{dod} with
  $s=3$, $ N_0 = [8C_2 (1+\|u_0\|_{H^{s'}})^2]^{{\frac 1 \delta}+} $, where $ \delta=\min\{\alpha, {s'}-s_\alpha \}  $, and
  $ T=\min\{T_0, (8C_2 N_0^{\frac 3 2})^{-1}\} $, we  get
   \begin{equation}\label{dod2}
  \|u\|_{Y^3_{T}}^2 \le 8 C_0^2 \|u_0\|_{H^{3}}^2\;.
  \end{equation}
  Therefore, taking $ A_0\le  \varepsilon_0 $ and  $ \beta_0\le \kappa $ small enough so that
  $$
  (8C_2 N_0^{\frac 3 2})^{-1}=\Bigl[ 8 C_2 \Bigl( 8C_2(1+\|u_0\|_{H^{s'}})^2 \Bigr)^{(\frac{3}{2\delta})_+}\Bigr]^{-1}
  > A_0(1+\|u_0\|_{H^{s'}})^{-\frac 1 \beta_0}
  $$
  we obtain that \eqref{dod2} is satisfied with $ T=T_0$.
  In view of \eqref{exp}, this forces $ T^*>T_0$. Now taking  $ s=s' $ and proceeding in the same way we get
  $$
  \|u\|_{Y^{s'}_{T_0}}^2 \le 8 C_0^2 \|u_0\|_{H^{s'}}^2\;.
  $$
  But since $ T^*>T_0 $, by continuity this ensures that $  \|u\|_{Y^{s'}_{T}}^2 \le 2^4 C_0^2  \|u_0\|_{H^{s'}}^2$ for some $ T_0<T<T^* $ which contradicts the definition of $ T_0$ .
 This concludes the proof of \eqref{cla1}.\\
 Note that Lemma \ref{lem-EsT} and Corollary \ref{coro-ee}  then ensure that for any $ N_0\ge C_1  (1+\|u_0\|_{H^{s_\alpha}})^{\frac 1 \alpha}$, it holds
 \begin{equation}\label{dod3}
 \|P_{\ge N_0} u \|_{L^\infty_T H^{s'}_x}^2 \lesssim \|P_{\ge N_0} u_0\|_{H^{s'}}^2+ (N_0^{-\alpha_+}+N_0^{(s_\alpha-s)_+}) (1+\|u_0\|_{H^{s'}})^4
 \end{equation}
 where $ T>0 $ is defined as in \eqref{cla1}.
 \subsection{Local existence in $ H^s(\R) $, $s>s_\alpha $.}
 Now let  us fix $s>s_\alpha $  and $ u_0\in H^s(\R) $. We set $ u_{0,n}=P_{\le n} u_0 $ and we denote by $ u_n \in C([0,T_n^{\star});H^\infty(\R)) $ the
 solutions to \eqref{dpB} emanating from $u_{0,n} $. Setting
 \begin{equation}\label{defT}
  T=A_0   (1+\|u_0\|_{H^{s}})^{-\frac 1 \beta_0}\, ,
  \end{equation}
   it follows from   \eqref{cla1}  that  for any $ n\in \N^* $, $T_n^{\star} \ge T$ and
\begin{equation} \label{bob}
\|u_n\|_{Y^s_T} \lesssim   \|u_0\|_{H^s}  \; .
\end{equation}
Let  $ -\frac 1 2 +\frac \alpha 4< \sigma<\min (0,s-2+\frac{3}{2} \alpha) $. For $ n\ge m\ge 1 $, clearly $ u_{0,n}-u_{0,m} \in \overline{L}^2(\R) $ and thus \eqref{lip} ensures that
$$
\| u_n-u_m\|_{L^\infty_{T^{''}} H^{\sigma}_x} \lesssim \|u_{0,n}-u_{0,m}\|_{\overline{H}^\sigma} \lesssim \|P_{\ge m} u_0 \|_{H^s} \; ,
$$
where $0< T^{''}=T^{''}(\|u_0\|_{H^{s'}}) \le T$. This last inequality combined with \eqref{dod3} ensure that
\begin{eqnarray}
\| u_n-u_m\|_{L^\infty_{T^{''}} H^{s}_x}^2
& \lesssim  &  N_0^{2(s-\sigma)}\|P_{< N_0}(u_n-u_m)\|_{L^{\infty}_{T^{''}}H^{\sigma}_x}^2 \\ & &+\|P_{\ge N_0}u_n\|_{L^{\infty}_{T^{''}}H^{s}_x}^2+\|P_{\ge N_0}u_m\|_{L^{\infty}_{T^{''}}H^{s}_x}^2 \nonumber\\
& \lesssim  &  N_0^{2(s-\sigma)}\|P_{\ge m}  u_0\|_{H^{s}}^2+ \|P_{\ge N_0} u_0 \|_{H^{s}}^2\nonumber\\
 & &+ (N_0^{-\alpha+}+N_0^{(s_\alpha-s)+}) (1+\|u_0\|_{H^{s}})^4  \label{bbb}
\end{eqnarray}
for any $  N_0\ge C_1  (1+\|u_0\|_{H^{s_\alpha}})^{\frac 1 \alpha}$.
This proves that $ \{u_n\} $ is a Cauchy sequence in
 $ C([0,T^{''}]; H^s(\R)) $ and thus converges to some $ u $ in this space. It is then not hard to check that $ u\in Y^s_{T^{''}} $ and is a solution to
  \eqref{dpB} emanating from $ u_0$. By the uniqueness result, this is the only one. Repeating this argument a finite number of times we obtain that actually
   $\{ u_n\} $ converges to $ u $ in $  C([0,T]; H^s(\R)) $ with $ T $ defined in \eqref{defT}.
\subsection{Continuity of the solution-map}
  Finally,  to prove the continuity with respect to initial data, we take a sequence $ \{u_0^j\} \subset B_{H^s}(0,2\|u_0\|_{H^s}) $ that converges to $u_0 $ in $ H^s(\R) $.
  We denote by respectively $ u^j$ and $ u^j_n $  the associated solutions to  \eqref{dpB} emanating from respectively $ u_0^j $ and $ P_{\le n} u_0^j $. Noticing that
  $$
  \lim_{m\to +\infty} \sup_{j\in\N} \|P_{\ge m}(u^j_0)\|_{H^s}=0 \,,
  $$
  we infer from \eqref{bbb} that
  $$
  \lim_{n\to +\infty}\sup_{j\in\N} \|u^j-u^j_n\|_{L^\infty_{T^{''}} H^{\sigma}_x}=0 \; .
  $$
  with $  T^{''}=T^{''}(\|u_0\|_{H^{s}})>0$. From
  $$
  \|u^j-u \|_{L_T^\infty H^s_x} \le \|u^j-u^j_n \|_{L_T^\infty H^s_x}  + \| u^j_n-u_n  \|_{L_T^\infty H^s_x} + \| u_n -u  \|_{L_T^\infty H^s_x}\,
  $$
 and  the continuity with respect to initial data in $ H^3(\R) $ (note that $ P_{\le n} u_0 $ and $ P_{\le n} u_0 $ belong to $ H^\infty(\R)$), it follows that $ u^j \to u $ in $ C([0,T^{''}]; H^s(\R)) $. Iterating this process a finite number of times we obtain that  $ u^j \to u $ in $ C([0,T]; H^s(\R)) $
  with $T $ defined in \eqref{defT} which  completes the proof of Theorem \ref{maintheo}.

\bibliographystyle{amsplain}

\providecommand{\bysame}{\leavevmode\hbox to3em{\hrulefill}\thinspace}
\providecommand{\MR}{\relax\ifhmode\unskip\space\fi MR }
\providecommand{\MRhref}[2]{%
  \href{http://www.ams.org/mathscinet-getitem?mr=#1}{#2}
}
\providecommand{\href}[2]{#2}
\begin{thebibliography}{}

\end{thebibliography}


\begin{thebibliography}{99}

\bibitem{An} \textsc{J. Angulo,}
\newblock \emph{Stability properties of solitary waves for fractional KdV and BBM equations,}
\newblock preprint (2017), arXiv:1701.06221.

\bibitem{Ar}
\newblock \textsc{M. N. Arnesen,}
\newblock \emph{Existence of solitary-wave solutions to nonlocal equations,}
\newblock Disc. Cont. Dyn. Syst., \textbf{36} (2016), 3483--3510.

\bibitem{B}
\newblock \textsc{F. Bernicot,}
\newblock \emph{Uniform estimates for paraproducts and related multilinear multipliers,}
\newblock Rev. Mat. Iberoam. \textbf{25} (2009), no. 3, 1055--1088.

\bibitem{bonasmith}
\newblock \textsc{J.L. Bona and R. Smith,}
\newblock \emph{The initial value problem for the Korteweg-de Vries equation,}
\newblock Philos. Trans. R. Soc. Lond., Ser. A, \textbf{278} (1975), 555--601.

\bibitem {Bo1}
\newblock \textsc{ J.~Bourgain},
\newblock \emph{ Fourier transform restriction phenomena for certain lattice subsets and application to nonlinear evolution equations
I. The Schr\"odinger equation},
\newblock Geom. Funct. Anal., \textbf{3} (1993), 157-178.

\bibitem{BuPl} \textsc{N. Burq and F. Planchon,}
\newblock \emph{On well-posedness for the Benjamin-Ono equation,}
\newblock Math. Ann., \textbf{340} (2008), 497--542.


\bibitem{CK}\textsc{M. Christ and A. Kiselev},
\newblock \emph{Maximal functions associated to filtrations,}
\newblock J. Funct. Anal., \textbf{179} (2001), no. 2, 409--425.


\bibitem{CoMe}
\newblock \textsc{R. Coifman and Y. Meyer, }
\newblock \emph{Au del\`a des op\'erateurs pseudo-diff\'erentiels, (French),}
\newblock Ast\'erisque, 57 Soci\'et\'e Math\'ematique de France, Paris, 1978.

\bibitem{CKSTT}
\newblock \textsc{J. Colliander, M. Keel, G. Staffilani, H. Takaoka and T. Tao,}
\newblock \textit{Sharp global well-posedness for KdV and modified KdV on $\mathbb R$ and $\mathbb T$,}
\newblock J. Amer. Math. Soc., \textbf{16} (2003), 705--749.

\bibitem{FoLiPo}
\newblock \textsc{G. Fonseca, F. Linares and G. Ponce,}
\newblock \emph{The IVP for the dispersion generalized  Benjamin-Ono equation in weighted Sobolev spaces,}
\newblock Ann. Inst. H. Poincar\'e, Anal. Non Lin.,  \textbf{30} (2013), 763--790.

\bibitem {G}
 \newblock \textsc{J.~Ginibre},
 \newblock \textit{Le probl\`eme de Cauchy pour des EDP semi-lin\'eaires p\'eriodiques en variables
d'espace (d'apr\`es Bourgain), }
\newblock in S\'eminaire Bourbaki 796, Ast\'erique, \textbf{237} (1995),
163--187.

\bibitem{Guo}
\newblock \textsc{Z. Guo,}
\newblock \emph{Local well-posedness for dispersion generalized Benjamin-Ono equations in Sobolev spaces}
\newblock J. Diff. Eq., \textbf{252} (2012), 2053--2084.


\bibitem{HIKK}
\newblock \textsc{S. Herr, A. Ionescu, C. E. Kenig and H. Koch,}
\newblock \emph{A para-differential renormalization technique for nonlinear dispersive equations,}
\newblock  Comm. Partial Diff. Eq., \textbf{35} (2010), no. 10, 1827--1875.

\bibitem{IfTa} 
\newblock \textsc{M. Ifrim and D. Tataru,}
\newblock \emph{Well-posedness and dispersive decay of small data solutions for the Benjamin-Ono equation,}
\newblock preprint (2017),  arXiv:1701.08476.

\bibitem{IK}
\newblock \textsc{A. D. Ionescu and C.E. Kenig,}
\newblock \emph{Global well-posedness of the Benjamin-Ono equation in low-regularity spaces,}
\newblock  J. Amer. Math. Soc., \textbf{20} (2007), 753--798.

\bibitem{IKT}
\newblock \textsc{A. D. Ionescu, C. E. Kenig and D. Tataru,}
\newblock \emph{Global well-posedness of the KP-I initial value problem in the energy space,}
\newblock Invent. Math., \textbf{173} (2008), 265--304.

\bibitem{KK}
\newblock \textsc{C.E. Kenig and K. D. Koenig,}
\newblock \emph{On the local well-posedness of the Benjamin-Ono and modified Benjamin-Ono equations,}
\newblock Math. Res. Let., \textbf{10} (2003), 879--895.

\bibitem{KPV2}
\newblock \textsc{C.E. Kenig, G. Ponce and L. Vega,}
\newblock \emph{Oscillatory integrals and regularity of dispersive equations,}
\newblock Indiana Univ. Math. J., \textbf{40} (1991), no. 1, 33--69.

\bibitem{KPV}
\newblock \textsc{C.E. Kenig, G. Ponce and L. Vega,}
\newblock \emph{Well-posedness and scattering results for the generalized Korteweg-de Vries equation via the contraction principle,}
\newblock Comm. Pure Appl. Math., \textbf{46} (1993), 527--620.

\bibitem{KlSa} \textsc{C. Klein and J.-C. Saut},
\emph{A numerical approach to blow-up issues for dispersive perturbations of Burgers' equation,}  Phys. D, \textbf{295/296} (2015), 46--65.

\bibitem{KoTz}
\newblock \textsc{H. Koch and N. Tzvetkov,}
\newblock \emph{On the local well-posedness of the Benjamin-Ono equation in $H^s(\mathbb R),$}
\newblock Int. Math. Res. Not., \textbf{26} (2003), 1449--1464.


\bibitem{LiPiSa1}
\newblock \textsc{F. Linares, D. Pilod and J.-C. Saut,}
\newblock \emph{Dispersive perturbations of Burgers and hyperbolic equations I: local theory,}
\newblock SIAM J. Math. Analysis, \textbf{46} (2014), 1505--1537.


\bibitem{LiPiSa2}
\newblock \textsc{F. Linares, D. Pilod and J.-C. Saut,}
\newblock \emph{Remarks on the orbital stability of ground state solutions of fKdV e relative equations,}
\newblock  Adv. Diff. Eq., \textbf{20} (2015), 835--858.

\bibitem{LM}
\newblock \textsc{J.L.  Lions and E.  Magenes,}
\newblock \emph{ Non-homogeneous boundary value problems and applications. Vol. I. Translated from the French by P. Kenneth. }
\newblock  Die Grundlehren der mathematischen Wissenschaften, Band 181. Springer-Verlag, New York-Heidelberg, 1972.




\bibitem{MoSaTz}
\newblock \textsc{L. Molinet,  J.-C.Saut and N. Tzvetkov},
\newblock {\it Ill-posedness issues for the Benjamin-Ono and related
equations},
\newblock SIAM J. Math. Anal. {\bf 33}, 4, (2001), 982--988.

\bibitem{MoPi}
\newblock \textsc{L. Molinet and D. Pilod,}
\newblock \emph{The Cauchy problem for the Benjamin-Ono equation in $L^2$
revisited,}
\newblock Analysis $\&$ PDE, \textbf{5} (2012), 365--395.

\bibitem{MoPiVe1}
\newblock \textsc{L. Molinet, D. Pilod and S. Vento}
\newblock \emph{Unconditional uniqueness for the modified Korteweg-de Vries equation on the line,}
\newblock preprint (2014), arXiv:1411.5707, to appear in Rev. Iber. Math.

\bibitem{MoPiVe2}
\newblock \textsc{L. Molinet, D. Pilod and S. Vento}
\newblock \emph{On unconditional well-posedness for the periodic modified Korteweg-De Vries equation,}
\newblock preprint (2016), arXiv:1607.05483, to appear in  J. Math. Soc. Japan.


\bibitem{MV}
\newblock \textsc{L. Molinet and S. Vento,}
\newblock \emph{Improvement of the energy method for strongly non resonant dispersive equations and applications,}
\newblock Anal. PDE., \textbf{8} (2015), 1455-1496.

\bibitem{MuPiTaoThi}
\newblock \textsc{C. Muscalu, J. Pipher, T. Tao and C. Thiele,}
\newblock \emph{Bi-parameter paraproducts},
\newblock Acta Math., \textbf{193} (2004), 269-296.

\bibitem{Po}
\newblock \textsc{G. Ponce,}
\newblock \emph{On the global well-posedness of the Benjamin-Ono equation,}
\newblock Diff. Int. Eq., \textbf{4} (1991), 527--542.

\bibitem{S}
\newblock \textsc{J.C. Saut,}
\newblock \emph{Sur quelques g\'en\'eralisations de l'\'equation de KdV,}
\newblock J. Math. Pures Appl., \textbf{58}, (1979), 21--61.

\bibitem{SS}
 \newblock \textsc{H. Smith and C. Sogge},
 \newblock \emph{Global Strichartz estimates for nontrapping perturbations of the Laplacian},
\newblock  Comm. Part. Diff. Eq, \textbf{25} (2000), no. 11-12, 2171--2183.

\bibitem{Ta}
\newblock \textsc{T. Tao},
\newblock \emph{Global well-posedness of the Benjamin-Ono equation in $H^1(\mathbb R)$,}
\newblock J. Hyp. Diff. Eq., \textbf{1} (2004), 27--49.

\end{thebibliography}

\end{document}